\documentclass[11pt]{article}
\usepackage{amsmath,amssymb,amsthm}
\usepackage{graphics,epsfig}
\usepackage{hyperref}
\usepackage[numbers]{natbib}
\usepackage{color}
\usepackage{graphicx}
\usepackage{caption}
\usepackage{subcaption}
\usepackage{float}
\usepackage{mathrsfs}
\usepackage{booktabs}
\usepackage{multirow}
\usepackage{comment}
\usepackage{setspace}
\usepackage{bbm}
\usepackage{bm}
\usepackage{amsfonts}
\usepackage{xcolor}
\usepackage{pslatex}
\usepackage{setspace}
\usepackage{bbm}
\usepackage{listings}
\def \aa {\boldsymbol{a}}

\def \hh {\boldsymbol{h}}

\def \ll {\boldsymbol{l}}

\def \pp {\boldsymbol{p}}

\def \uu {\boldsymbol{u}}
\def \vv {\boldsymbol{v}}
\def \ww {\boldsymbol{w}}
\def \xx {\boldsymbol{x}}
\def \yy {\boldsymbol{y}}
\def \zz {\boldsymbol{z}}

\def \UU {\boldsymbol{U}}

\def \XX {\boldsymbol{X}}
\def \YY {\boldsymbol{Y}}
\def \ZZ {\boldsymbol{Z}}

\def \Re {\mathbb{R}}

\newtheorem{definition}{\bf Definition}[section]
\newtheorem{proposition}{\bf Proposition}[section]
\newtheorem{theorem}{\bf Theorem}[section]

\newtheorem{lemma}{\bf Lemma}[section]
\newtheorem{corollary}{\bf Corollary}[section]

\newtheorem{remark}{\bf Remark}[section]

\newtheorem{example}{\bf Example}[section]


\begin{document}

\title{\bfseries  Lattice-Based Characterizations and Inference for Multivariate Dependence}

 \author{
    Paramahansa Pramanik$^{1,2}$
}

\date{
    \small
    $^{1}$Department of Mathematics and Statistics, University of South Alabama, Mobile, AL 36688, United States.\\
    $^{2}$E-mail address: \texttt{{\color{blue}ppramanik@southalabama.edu}}
}

%\date{\today}
\maketitle

\begin{abstract}
	
	\noindent
We develop a measure-theoretic framework for conditional
multivariate total positivity of order two
($\mathrm{cmMTP}_2$), formulated through lattice
inequalities without requiring the existence of
probability densities. This formulation extends
classical density-based notions of positive dependence
to general probability measures, including singular
distributions. We establish fundamental preservation
properties under marginalization, monotone
transformations, and weak convergence, thereby
providing a unified treatment of dependence structures
under distributional limits. We further investigate
the behavior of conditional total positivity under
Markov-kernel composition and derive characterizations
for important copula families. In the Archimedean
setting, the defining condition is expressed through
log-convexity of generator derivatives, whereas for
Gaussian distributions it corresponds to nonpositive
off-diagonal entries of the precision matrix, or
equivalently, nonnegative partial correlations.
To connect these structural results with statistical
inference, we introduce empirical lattice contrasts
and establish their joint asymptotic normality.
The resulting simultaneous testing procedures achieve
asymptotic size control and admit a local power
analysis. Our results provide a general foundation
for density-free characterization, specification
testing, and inference concerning positive dependence
in multivariate probability models, particularly
when conventional density-based methods are
inapplicable or insufficient.
	
\end{abstract}

\textbf{AMS 2020 subject classifications:}
62E10; 62H05; 62H20.

\textbf{Keywords:} Conditional total positivity; Weak convergence; Markov kernels; Copulas.

\section{Introduction}
\label{sec:introduction}

Stochastic dependence among multiple random variables can be
characterized either through quantitative measures of association
or through classes of probability laws satisfying prescribed
dependence inequalities, with the former describing the strength
and direction of particular relationships and the latter identifying
structural properties of joint distributions, including positive
lower orthant dependence, stochastic monotonicity, and multivariate
total positivity of order two. Such structural dependence properties
play a fundamental role in multivariate statistics because they
impose mathematically interpretable restrictions on joint probability
distributions and their associated conditional dependence structures,
thereby providing a systematic framework for investigating positive
association, distributional ordering, and the behavior of stochastic
systems under conditioning and transformations. Among these notions,
multivariate total positivity of order two ($\mathrm{MTP}_2$)
occupies a particularly important position because of its underlying
lattice structure, its strong implications for positive dependence,
and its preservation properties under various probabilistic
operations, which make it useful for both theoretical investigations
and statistical applications
\citep{KarlinRinott1980a,MullerStoyan2002,FallatEtAl2017}.
More recently, \citet{FuchsWang2027} introduced multivariate
total positivity of order two for conditional distributions,
denoted $\mathrm{cMTP}_2$, extending the classical framework
by explicitly incorporating conditioning variables into the
formulation of total positivity and establishing a dependence
concept that lies between density-based and distribution-function
total positivity. This conditional perspective provides a
more refined description of multivariate dependence by accounting
for the influence of conditioning information on the lattice
inequalities governing the underlying probability law.

Motivated by these developments, we introduce and systematically
investigate a measure-theoretic counterpart of conditional
multivariate total positivity of order two, denoted
$\mathrm{cmMTP}_2$, which preserves the essential conditional
lattice structure while eliminating the requirement that the
underlying probability distribution admit a density with respect
to a product measure. In contrast to distribution-function
$\mathrm{MTP}_2$, which imposes total positivity restrictions
on joint lower-orthant probabilities, the proposed
$\mathrm{cmMTP}_2$ framework incorporates additional lattice
inequalities involving arbitrary admissible conditioning events,
thereby capturing conditional dependence restrictions that are
not expressed solely through the joint distribution function.
The measure-theoretic formulation is particularly relevant
because these conditional lattice inequalities remain
well-defined and mathematically meaningful for continuous,
discrete, mixed, and singular probability laws, including
settings in which conventional density-based characterizations
are unavailable or inappropriate. Consequently, the proposed
framework provides a unified mathematical foundation for
studying conditional dependence across different classes
of multivariate probability distributions without imposing
unnecessary absolute continuity assumptions. Beyond the
formulation of the dependence property itself, this approach
establishes a common setting for investigating the behavior
of conditional total positivity under weak convergence,
examining its compatibility with Markov composition,
developing copula representations and characterizations,
and constructing density-free procedures for multivariate
statistical inference. Taken together, these directions
connect the structural theory of conditional positive
dependence with probabilistic stability, stochastic
transition mechanisms, copula-based modeling, and
inferential methods applicable to general probability laws.

The theory of total positivity originates in the classical work
of Karlin and was developed probabilistically by
\citet{KarlinRinott1980a}; see also \citet{MullerStoyan2002}
for stochastic dependence and \citet{FallatEtAl2017} for
$\mathrm{MTP}_2$ in Markov structures. Density-based total
positivity has subsequently been investigated in graphical
models, exponential families, and statistical estimation
\citep{LauritzenEtAl2021,RobevaEtAl2021}. A complementary
approach formulates positive dependence directly through
probability measures. In particular, \citet{ColangeloEtAl2006}
introduced a density-free formulation of $\mathrm{MTP}_2$
through lattice inequalities for probabilities of Borel sets
and established its stability under weak convergence.
Conditional total positivity has a more recent development.
\citet{FuchsTschimpke2023} investigated conditional
total positivity from a Markov-kernel perspective in the
bivariate setting, while \citet{FuchsWang2027} introduced
$\mathrm{cMTP}_2(\XX_B\mid\XX_A)$ for multivariate
distributions and studied its relations with
$\mathrm{dMTP}_2$, $\mathrm{MTP}_2$, stochastic
monotonicity, Archimedean copulas, and Markov products.
Their formulation, however, requires
$\XX_A$ to admit a density with respect to
$\bigotimes_{i\in A}P_{X_i}$, thereby excluding
certain singular dependence structures and limiting
its applicability under weak convergence when absolute
continuity is lost. A density-free conditional analog
and its closure properties were consequently identified
as directions for further research \citep{FuchsWang2027}.
The present paper addresses these questions by combining
the measure-theoretic approach of
\citet{ColangeloEtAl2006} with the conditional framework
of \citet{FuchsWang2027}. Related developments include
weak-convergence methods \citep{Billingsley1999},
Archimedean positive-dependence characterizations
\citep{MullerScarsini2005,McNeilNeslehova2009},
and copula representations \citep{DuranteSempi2015}.

Our first contribution is the introduction and characterization
of \emph{conditional measure-theoretic multivariate total
	positivity of order two}. For a non-trivial partition
$A\dot\cup B=[d]$, the property
$\mathrm{cmMTP}_2(\XX_B\mid\XX_A)$ is defined by
\begin{equation}
	\begin{split}
		&\mathbb P(\XX_A\in C,\XX_B\leq\xx_B)
		\mathbb P(\XX_A\in D,\XX_B\leq\yy_B)\\
		&\leq
		\mathbb P(\XX_A\in C\wedge D,
		\XX_B\leq\xx_B\wedge\yy_B)\\
		&\phantom{\leq}\times
		\mathbb P(\XX_A\in C\vee D,
		\XX_B\leq\xx_B\vee\yy_B),
	\end{split}
	\label{eq:intro-conditional-lattice}
\end{equation}
for admissible $C,D\in\mathcal B_A$ and
$\xx_B,\yy_B\in\Re^B$. Thus the conditioning
coordinates enter through measurable sets rather
than a density. We establish
$\mathrm{mMTP}_2(\XX)\Longrightarrow
\mathrm{cmMTP}_2(\XX_B\mid\XX_A)\Longrightarrow
\mathrm{MTP}_2(\XX)$, together with reduction of
the conditioning block and marginal stability.
The additional content of $\mathrm{cmMTP}_2$ is
not marginalization, which is already available
for classical total positivity, but the stronger
conditional restrictions in
\eqref{eq:intro-conditional-lattice}.
In particular, common monotone-factor models,
including comonotonic random vectors, satisfy
$\mathrm{cmMTP}_2$ even when their joint laws
are singular with respect to the product of
their marginals. Under the regularity conditions
of Theorem~\ref{thm:density-equivalence}, we obtain
$\mathrm{cmMTP}_2(\XX_B\mid\XX_A)
\Longleftrightarrow
\mathrm{cMTP}_2(\XX_B\mid\XX_A)$,
showing compatibility with the density-based
conditional formulation. We further establish
closure under $\mu_n\Rightarrow\mu$ and
coordinatewise non-decreasing Borel transformations,
including situations where
$\mu_n\ll\lambda^d$ and $\mu\perp\lambda^d$.
Consequently, conditional total positivity remains
available under singular approximation limits.

Our second contribution concerns structural
representations and tractable distributional
characterizations. For lattice-positive Markov
kernels, we establish
$K_1\in\mathcal K_{\mathrm{mLTP}_2}(A,B)$ and
$K_2\in\mathcal K_{\mathrm{LTP}_2}(B,C)$
$\Longrightarrow
K_1K_2\in\mathcal K_{\mathrm{LTP}_2}(A,C)$,
yielding $\mathrm{cmMTP}_2(\XX_C\mid\XX_A)$
under suitable Markov factorizations.
For continuous strictly increasing margins,
$\mathrm{cmMTP}_2$ is a copula property.
In particular, for a strict Archimedean copula
$C_\psi$ and $|A|=k$, we characterize
$\mathrm{cmMTP}_2(\UU_B\mid\UU_A)$ through
log-convexity of
$h_k=(-1)^k\psi^{(k)}$, under the stated
regularity conditions. This criterion applies
to the Clayton, Gumbel--Hougaard, Frank,
and Joe families and is preserved under
weak copula limits, including singular limits.
For nondegenerate Gaussian laws
$\XX\sim N_d(\boldsymbol\mu,\Sigma)$,
we obtain a complementary matrix characterization.
Writing $K=\Sigma^{-1}$, Corollary~
\ref{cor:gaussian-partial-correlation} establishes
\begin{equation}
	\mathrm{cmMTP}_2(\XX)
	\Longleftrightarrow
	K_{ij}\leq0,\ i\neq j
	\Longleftrightarrow
	\rho_{ij\mid[d]\setminus\{i,j\}}\geq0,\ i\neq j.
	\label{eq:intro-gaussian-characterization}
\end{equation}
Thus $\mathrm{cmMTP}_2(\XX)$ coincides with
$\mathrm{dMTP}_2(\XX)$ within the nondegenerate
Gaussian family, and verification reduces to
$d(d-1)/2$ precision-matrix sign restrictions.
Unlike nonnegative marginal correlations,
\eqref{eq:intro-gaussian-characterization}
requires nonnegative partial correlations,
providing a direct interpretation for Gaussian
graphical models and covariance selection
\citep{KarlinRinott1980a,FallatEtAl2017}.

Our third contribution develops multivariate
statistical inference for the conditional lattice
restrictions. Let
$\XX^{(1)},\ldots,\\ \XX^{(n)}
\stackrel{\mathrm{iid}}{\sim}\mu$
and fix a finite collection $\mathcal Q$
of $m$ admissible lattice comparisons.
The corresponding contrasts
$\boldsymbol\Delta=(\Delta_1,\ldots,\Delta_m)^\top$
satisfy $\boldsymbol\Delta\in[0,\infty)^m$
whenever $\mathrm{cmMTP}_2(\XX_B\mid\XX_A)$
holds. We construct empirical-frequency estimators
$\widehat{\boldsymbol\Delta}_n$ and establish
$\sqrt n(\widehat{\boldsymbol\Delta}_n-
\boldsymbol\Delta)\Rightarrow
N_m(\mathbf0,\Omega)$,
with a consistent plug-in estimator of $\Omega$.
This yields simultaneous tests of
$H_0:\boldsymbol\Delta\in[0,\infty)^m$
against
$H_1:\min_{r\in[m]}\Delta_r<0$,
with asymptotic size control, consistency
under fixed violations, and local-power
characterizations. The procedure requires
neither joint nor conditional density estimation
and remains applicable to discrete, continuous,
mixed, and singular laws. Importantly,
the finite system $\mathcal Q$ tests necessary
implications of Definition~\ref{def:conditional-measure-mtp2} rather than
the full collection of lattice inequalities.
Thus rejection provides evidence against
$\mathrm{cmMTP}_2$ and excludes incompatible
positive-dependence model restrictions,
including Gaussian $M$-matrix and suitable
Archimedean copula models, whereas
non-rejection does not establish
$\mathrm{cmMTP}_2$. The resulting inference
therefore provides a density-free specification
diagnostic for multivariate model assessment
rather than an omnibus verification of
conditional total positivity.

The paper is organized as follows.
Section~\ref{sec:conditional-mtp2} introduces
$\mathrm{mMTP}_2$ and $\mathrm{cmMTP}_2$,
establishes their implication structure,
common-factor criterion, conditioning-block
reduction, and marginal properties.
Section~\ref{sec:characterizations} proves
equivalence with density-based
$\mathrm{cMTP}_2$ under regularity conditions
and establishes preservation under weak
convergence and monotone transformations.
Section~\ref{sec:structural-representations}
develops lattice-positive Markov-kernel
composition and copula representations,
including Archimedean and Gaussian
characterizations and singular copula limits.
Section~\ref{sec:inference} establishes
the joint asymptotic distribution of empirical
lattice contrasts and develops simultaneous
testing, covariance estimation, consistency,
and local-power results.
Section~\ref{sec:discussion} discusses the
principal implications and directions for
further research.

\section{Conditional multivariate total positivity for probability measures}
\label{sec:conditional-mtp2}

This section develops a measure-theoretic formulation of conditional multivariate total positivity that does not require the existence of
densities. We introduce $\mathrm{mMTP}_2$ and $\mathrm{cmMTP}_2$, establish their basic marginal and reduction properties, and derive the
implication chain $\mathrm{mMTP}_2(\XX)\Longrightarrow
\mathrm{cmMTP}_2(\XX)\Longrightarrow
\mathrm{MTP}_2(\XX)$.

\subsection{Preliminaries and measure-theoretic formulation}
\label{subsec:preliminaries}

Throughout the paper, let $d\geq 2$ and write $[d]:=\{1,\ldots,d\}$.  For $\xx=(x_1,\ldots,x_d),\yy=(y_1,\ldots,y_d)\in\Re^d$, let
$\xx\wedge\yy:=(x_1\wedge y_1,\ldots,x_d\wedge y_d)$ and
$\xx\vee\yy:=(x_1\vee y_1,\ldots,x_d\vee y_d)$ denote the
componentwise minimum and maximum, respectively. The usual coordinatewise partial order on $\Re^d$ is denoted by $\xx\leq\yy$. For a non-empty set $A\subseteq[d]$, we define $\Re^A:=\prod_{i\in A}\Re$, $\xx_A:=(x_i)_{i\in A}$, and $\mathcal B_A:=\mathcal B(\Re^A)$, where $\mathcal B(\Re^A)$ is the Borel $\sigma$-algebra on $\Re^A$. If $A,B\subseteq[d]$ are
disjoint and non-empty, $\xx_{A\cup B}$ is identified with $(\xx_A,\xx_B)$ whenever no ambiguity can arise. For
$C,D\subseteq\Re^A$, define their lattice joins and meets by
\begin{equation}
	C\vee D:=\{\xx\vee\yy:\xx\in C,\ \yy\in D\},\qquad
	C\wedge D:=\{\xx\wedge\yy:\xx\in C,\ \yy\in D\}.
	\label{eq:set-lattice-operations}
\end{equation}
Unlike the corresponding pointwise operations, the sets in \eqref{eq:set-lattice-operations} need not be Borel for arbitrary $C,D\in\mathcal B_A$. Whenever probabilities involving $C\vee D$ or $C\wedge D$ are considered below, their measurability will therefore
be stated explicitly.

Let $\mu$ be a probability measure on $(\Re^d,\mathcal B_d)$ and let $\XX=(X_1,\ldots,X_d)$ be a random vector with distribution $\mu$. For $\varnothing\neq A\subseteq[d]$, denote the $A$-marginal of
$\mu$ by $\mu_A:=\mu\circ\pi_A^{-1}$, where
$\pi_A:\Re^d\to\Re^A$ is the canonical projection. If $A$ and $B$ form a partition of $[d]$, a regular conditional distribution of $\XX_B$ given $\XX_A$ is denoted by $K_{B\mid A}(\xx_A,\cdot)$. Therefore, for $E\in\mathcal B_A$ and $F\in\mathcal B_B$,
\begin{equation}
	\mu(E\times F)
	=
	\int_E K_{B\mid A}(\xx_A,F)\,\mu_A(d\xx_A).
	\label{eq:disintegration}
\end{equation}
Since $\Re^A$ and $\Re^B$ are standard Borel spaces, such a kernel exists and is $\mu_A$-almost surely unique \citep{Kallenberg2021}. Statements involving $K_{B\mid A}(\xx_A,\cdot)$ are understood with respect to a fixed version of the regular conditional distribution. A non-negative function $h:\Re^d\to[0,\infty)$ is multivariate
totally positive of order two, abbreviated $\mathrm{MTP}_2$, if $h(\xx)h(\yy)\leq h(\xx\wedge\yy)h(\xx\vee\yy)$ for all
$\xx,\yy\in\Re^d$. Total positivity in this sense is naturally associated with the lattice structure of the underlying state space and is stable under several operations of statistical interest,
including marginalization and, under suitable conditions,
conditioning \citep{KarlinRinott1980a,FallatEtAl2017}. The latter properties have made
$\mathrm{MTP}_2$ particularly useful in multivariate dependence and Markov structures \citep{FallatEtAl2017}. In the present paper the basic object is, however, a probability measure rather than a density. This distinction is essential because an $\mathrm{MTP}_2$
probability law need not admit a density with respect to a prescribed product measure. A density-free formulation of multivariate total positivity was developed by \citet{ColangeloEtAl2006}, in particular
to obtain a class of positively dependent probability measures that is closed under weak convergence. Our conditional formulation below is built on this measure-theoretic point of view.

We first recall the measure-theoretic counterpart of multivariate total positivity. The construction below does not require the existence of a density and will serve as the basic building block for the conditional
notion introduced in Section~\ref{subsec:definition}.

\begin{definition}[Measure-theoretic $\mathrm{MTP}_2$]
	\label{def:measure-mtp2}
	Let $\mu$ be a probability measure on $(\Re^d,\mathcal B_d)$. We say that $\mu$ is measure-theoretically multivariate totally positive of order two, and write $\mathrm{mMTP}_2(\mu)$, if
	\begin{equation}
		\mu(C)\mu(D)
		\leq
		\mu(C\wedge D)\mu(C\vee D)
		\label{eq:measure-mtp2}
	\end{equation}
	for all $C,D\in\mathcal B_d$ such that
	$C\wedge D,C\vee D\in\mathcal B_d$. A random vector $\XX$ is said to be $\mathrm{mMTP}_2$, and we write $\mathrm{mMTP}_2(\XX)$, if its distribution $\mathbb P_{\XX}$ satisfies $\mathrm{mMTP}_2(\mathbb P_{\XX})$.
\end{definition}

Definition~\ref{def:measure-mtp2} separates the lattice property of the probability measure from any particular density representation. The following remarks describes three features of this formulation that
will be used throughout: its restriction to lower orthants, its relation to the density-based setting, and the measurability of the set-valued lattice operations.

\begin{remark}
	\label{rem:measure-mtp2}
	\begin{enumerate}
		
		\noindent\item[(i)]
		For $\xx\in\Re^d$, define
		$L_{\xx}:=(-\infty,\xx]=\prod_{i=1}^d(-\infty,x_i]$.
		Then, for every $\xx,\yy\in\Re^d$,
		$L_{\xx}\wedge L_{\yy}=L_{\xx\wedge\yy}$ and
		$L_{\xx}\vee L_{\yy}=L_{\xx\vee\yy}$.
		Consequently, $\mathrm{mMTP}_2(\mu)$ and
		\eqref{eq:measure-mtp2} imply
		$F_{\mu}(\xx)F_{\mu}(\yy)
		\leq
		F_{\mu}(\xx\wedge\yy)F_{\mu}(\xx\vee\yy)$,
		where
		$F_{\mu}(\xx):=\mu(L_{\xx})$.
		Thus, viewed as the mapping
		$\xx\mapsto F_{\mu}(\xx)$,
		the distribution function of $\mu$ is $\mathrm{MTP}_2$. The converse does not follow directly from the definition, since
		$\mathrm{mMTP}_2(\mu)$ requires \eqref{eq:measure-mtp2} for all
		admissible pairs $C,D\in\mathcal B_d$, whereas the distribution function tests only the lower orthants $L_{\xx}$ and $L_{\yy}$.
		
		\item[(ii)]
		Let $\mu_i:=\mu\circ\pi_i^{-1}$, $i\in[d]$, and set $\nu:=\bigotimes_{i=1}^d\mu_i$. Definition~\ref{def:measure-mtp2} requires neither $\mu\ll\nu$ nor the existence of a Radon-Nikodym derivative
		$f:=d\mu/d\nu$. If $\mu\ll\nu$, the usual density-based condition is
		$f(\xx)f(\yy)\leq
		f(\xx\wedge\yy)f(\xx\vee\yy)$
		for $\xx,\yy\in\Re^d$, after choosing a suitable version of $f$. The relation between this pointwise property and
		$\mathrm{mMTP}_2(\mu)$ is not assumed here and will be established under appropriate conditions in Section~\ref{subsec:density-equivalence}.
		In particular, the present construction is meaningful when $\mu\perp\nu$, which is excluded by the density-based formulation in \citet{ColangeloEtAl2006}.
		
		\item[(iii)]
		Define
		$T_{\wedge}(\xx,\yy):=\xx\wedge\yy$ and
		$T_{\vee}(\xx,\yy):=\xx\vee\yy$ for
		$(\xx,\yy)\in\Re^d\times\Re^d$.
		Although $T_{\wedge}$ and $T_{\vee}$ are continuous, for
		$C,D\in\mathcal B_d$ the identities
		$C\wedge D=T_{\wedge}(C\times D)$ and
		$C\vee D=T_{\vee}(C\times D)$
		do not guarantee that $C\wedge D$ and $C\vee D$ are Borel. Accordingly, \eqref{eq:measure-mtp2} is imposed only when $C\wedge D,C\vee D\in\mathcal B_d$.The same measurability convention will be used for the conditional
		measure-theoretic formulation introduced in
		Section~\ref{subsec:definition}.
		
	\end{enumerate}
\end{remark}

\noindent The measure-theoretic approach also inherits a basic stability property that will be used repeatedly below. For $\varnothing\neq A\subseteq[d]$, recall that
$\mu_A=\mu\circ\pi_A^{-1}$ denotes the $A$-marginal of $\mu$.

\begin{lemma}[Marginal stability]
	\label{lem:mmtp2-marginal}
	If $\mathrm{mMTP}_2(\mu)$ holds, then
	$\mathrm{mMTP}_2(\mu_A)$ holds for every non-empty
	$A\subseteq[d]$.
\end{lemma}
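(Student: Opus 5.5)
Given admissible $C,D\in\mathcal B_A$ (Borel, with $C\wedge D,\,C\vee D\in\mathcal B_A$), the plan is to lift them to cylinder sets in $\Re^d$ and apply $\mathrm{mMTP}_2(\mu)$ there. Put $B:=[d]\setminus A$; if $B=\varnothing$ there is nothing to prove. Define
\[
\widetilde C:=\pi_A^{-1}(C)=C\times\Re^B,\qquad \widetilde D:=\pi_A^{-1}(D)=D\times\Re^B,
\]
after the identification $\Re^d\cong\Re^A\times\Re^B$, which only permutes coordinates and is a lattice isomorphism. Both sets are Borel in $\Re^d$, and $\mu(\widetilde C)=\mu_A(C)$, $\mu(\widetilde D)=\mu_A(D)$.

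The key step is the identity
\[
\widetilde C\wedge\widetilde D=(C\wedge D)\times\Re^B,\qquad \widetilde C\vee\widetilde D=(C\vee D)\times\Re^B .
\]
For the meet, the inclusion ``$\subseteq$'' holds because $(\xx_A,\xx_B)\wedge(\yy_A,\yy_B)=(\xx_A\wedge\yy_A,\xx_B\wedge\yy_B)$. Here $\xx_A\wedge\yy_A\in C\wedge D$ and the $B$-component lies in $\Re^B$. For ``$\supseteq$'', take $(\zz_A,\zz_B)$ with $\zz_A=\xx_A\wedge\yy_A$, $\xx_A\in C$, $\yy_A\in D$, and choose the $B$-coordinates $\xx_B=\yy_B=\zz_B$. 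Then $(\xx_A,\zz_B)\in\widetilde C$, $(\yy_A,\zz_B)\in\widetilde D$, and their meet is $(\zz_A,\zz_B)$. The join is handled identically. This is the only point needing care: the $B$-coordinates must be chosen freely, which works precisely because the fibres are all of $\Re^B$.

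Consequently $\widetilde C\wedge\widetilde D$ and $\widetilde C\vee\widetilde D$ are preimages under $\pi_A$ of Borel sets, hence lie in $\mathcal B_d$. So the pair $(\widetilde C,\widetilde D)$ is admissible in Definition~\ref{def:measure-mtp2}, and \eqref{eq:measure-mtp2} gives
\[
\mu_A(C)\mu_A(D)=\mu(\widetilde C)\mu(\widetilde D)\le\mu(\widetilde C\wedge\widetilde D)\,\mu(\widetilde C\vee\widetilde D)=\mu_A(C\wedge D)\,\mu_A(C\vee D).
\]
This is $\mathrm{mMTP}_2(\mu_A)$. I expect no real obstacle; the measurability convention of Remark~\ref{rem:measure-mtp2}(iii) is satisfied automatically by the cylinder identity above.
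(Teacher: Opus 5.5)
Your proof is correct and matches the paper's argument: lift $C,D$ to the cylinders $\pi_A^{-1}(C),\pi_A^{-1}(D)$, identify their meet and join with $\pi_A^{-1}(C\wedge D)$ and $\pi_A^{-1}(C\vee D)$, and apply $\mathrm{mMTP}_2(\mu)$. Your explicit check of the inclusion $\supseteq$, choosing the $B$-coordinates equal to $\zz_B$, spells out the step the paper covers only by saying ``the canonical projection preserves coordinatewise lattice operations.''
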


\begin{proof}
	Fix $\varnothing\neq A\subseteq[d]$ and let
	$C,D\in\mathcal B_A$ be such that
	$C\wedge D,C\vee D\in\mathcal B_A$. Set
	$\widetilde C:=\pi_A^{-1}(C)$ and
	$\widetilde D:=\pi_A^{-1}(D)$. Then
	$\widetilde C,\widetilde D\in\mathcal B_d$ and, since the canonical
	projection preserves coordinatewise lattice operations,
	$\widetilde C\wedge\widetilde D
	=\pi_A^{-1}(C\wedge D)$ and
	$\widetilde C\vee\widetilde D
	=\pi_A^{-1}(C\vee D)$.
	Hence, by Definition~\ref{def:measure-mtp2},
	$\mu_A(C)\mu_A(D)
	\leq
	\mu_A(C\wedge D)\mu_A(C\vee D)$.
	Therefore $\mathrm{mMTP}_2(\mu_A)$ holds.
\end{proof}

\subsection{Conditional multivariate total positivity}
\label{subsec:definition}

We now move from the unconditional measure-theoretic property to the conditional setting. Let $A$ and $B$ be non-empty sets forming a partition of $[d]$, and write $\XX=(\XX_A,\XX_B)$. The density-based property $\mathrm{cMTP}_2(\XX_B\mid\XX_A)$ of 
\citet{FuchsWang2027} incorporates the conditioning coordinates through a density of $\XX_A$. Our objective is to retain this dependence on $\XX_A$ without imposing absolute continuity. For $\xx_B\in\Re^B$, define $L_{\xx_B}:=(-\infty,\xx_B]$. This leads to a
conditional lattice inequality formulated entirely in terms of the joint probability measure.

\begin{definition}[Conditional measure-theoretic $\mathrm{MTP}_2$]
	\label{def:conditional-measure-mtp2}
	Let $A$ and $B$ be non-empty sets forming a partition of $[d]$. We say that $\XX=(\XX_A,\XX_B)$ is conditionally measure-theoretically multivariate totally positive of order two, and
	write $\mathrm{cmMTP}_2(\XX_B\mid\XX_A)$, if
	\begin{equation}
		\begin{split}
			&\mathbb P(\XX_A\in C,\XX_B\in L_{\xx_B})
			\mathbb P(\XX_A\in D,\XX_B\in L_{\yy_B})\leq
			\mathbb P(\XX_A\in C\wedge D,
			\XX_B\in L_{\xx_B\wedge\yy_B})
			\mathbb P(\XX_A\in C\vee D,
			\XX_B\in L_{\xx_B\vee\yy_B})
		\end{split}
		\label{eq:cmmtp2}
	\end{equation}
	for all $\xx_B,\yy_B\in\Re^B$ and all $C,D\in\mathcal B_A$ such that $C\wedge D,C\vee D\in\mathcal B_A$.
\end{definition}

Definition~\ref{def:conditional-measure-mtp2} retains the conditioning coordinates through measurable subsets of $\Re^A$, rather than through a density of $\XX_A$. The following observations clarify how \eqref{eq:cmmtp2} relates the conditional formulation to the measure-theoretic and distribution-function notions of total positivity.

\begin{remark}
	\label{rem:conditional-measure-mtp2}
	\begin{enumerate}
		
		\noindent \item[(i)]
		Although the notation
		$\mathrm{cmMTP}_2(\XX_B\mid\XX_A)$ emphasizes the partition $(A,B)$, the property is formulated entirely in terms of the joint
		law $\mathbb P_{\XX}$. Indeed, if
		$\mu=\mathbb P_{\XX}$, then \eqref{eq:cmmtp2} is equivalent to
		\begin{equation}
			\begin{split}
				&\mu(C\times L_{\xx_B})\mu(D\times L_{\yy_B})\leq
				\mu((C\wedge D)\times L_{\xx_B\wedge\yy_B})
				\mu((C\vee D)\times L_{\xx_B\vee\yy_B}),
			\end{split}
			\label{eq:cmmtp2-joint-law}
		\end{equation}
		for every admissible $C,D\in\mathcal B_A$ and
		$\xx_B,\yy_B\in\Re^B$. More precisely,
		$\mathrm{cmMTP}_2(\XX_B\mid\XX_A)$ is invariant under the choice of a version of the regular conditional distribution $K_{B\mid A}$.
		
		\item[(ii)]
		For $\xx_A,\yy_A\in\Re^A$, set
		$C=L_{\xx_A}$ and $D=L_{\yy_A}$. Since,
		$L_{\xx_A}\wedge L_{\yy_A}=L_{\xx_A\wedge\yy_A}$, and
		$L_{\xx_A}\vee L_{\yy_A}=L_{\xx_A\vee\yy_A}$,
		\eqref{eq:cmmtp2} yields
		$F_{\XX}(\xx_A,\xx_B)F_{\XX}(\yy_A,\yy_B)
		\leq
		F_{\XX}(\xx_A\wedge\yy_A,\xx_B\wedge\yy_B)
		F_{\XX}(\xx_A\vee\yy_A,\xx_B\vee\yy_B)$. Hence,
		$\mathrm{cmMTP}_2(\XX_B\mid\XX_A)$ implies
		$\mathrm{MTP}_2(\XX)$ in the distribution-function sense.
		
		\item[(iii)]
		The conditioning coordinates are not passive in
		\eqref{eq:cmmtp2}. Let $\mathbf t_B=(t_i)_{i\in B}$ with $t_i\uparrow\infty$ for every $i\in B$. Then $L_{\mathbf t_B}\uparrow\Re^B$, and continuity from below implies
		$\mathbb P(\XX_A\in C,\XX_B\in L_{\mathbf t_B})
		\to\mu_A(C)$. Invoking \eqref{eq:cmmtp2} with
		$\xx_B=\yy_B=\mathbf t_B$ and passing to the limit yields $\mu_A(C)\mu_A(D)\leq
		\mu_A(C\wedge D)\mu_A(C\vee D)$.
		Therefore, $\mathrm{cmMTP}_2(\XX_B\mid\XX_A)$ implies $\mathrm{mMTP}_2(\XX_A)$.
		
	\end{enumerate}
\end{remark}

\noindent For $\xx_B\in\Re^B$, define the finite measure
$\mu_{\xx_B}^{A}$ on $(\Re^A,\mathcal B_A)$ by
$\mu_{\xx_B}^{A}(C):=
\mathbb P(\XX_A\in C,\XX_B\in L_{\xx_B})$.
The family $(\mu_{\xx_B}^{A})_{\xx_B\in\Re^B}$ is increasing in the
sense that $\xx_B\leq\yy_B$ implies
$\mu_{\xx_B}^{A}(C)\leq\mu_{\yy_B}^{A}(C)$ for every
$C\in\mathcal B_A$. Moreover,
$\mu_{\xx_B}^{A}(\Re^A)=F_{\XX_B}(\xx_B)$ and
$\mu_{\xx_B}^{A}(C)\uparrow\mu_A(C)$ as
$\xx_B\uparrow\boldsymbol{\infty}_B$. Proposition~\ref{prop:cmmtp2-finite-measures} provides an equivalent formulation of $\mathrm{cmMTP}_2(\XX_B\mid\XX_A)$ in terms of this family of finite measures.

\begin{proposition}
	\label{prop:cmmtp2-finite-measures}
	Let $A$ and $B$ be non-empty sets forming a partition of $[d]$. Then $\mathrm{cmMTP}_2(\XX_B\mid\XX_A)$ holds if and only if
	\begin{equation}
		\begin{split}
			\mu_{\xx_B}^{A}(C)\mu_{\yy_B}^{A}(D)
			\leq
			\mu_{\xx_B\wedge\yy_B}^{A}(C\wedge D)
			\mu_{\xx_B\vee\yy_B}^{A}(C\vee D)
		\end{split}
		\label{eq:cmmtp2-finite-measures}
	\end{equation}
	for all $\xx_B,\yy_B\in\Re^B$ and all
	$C,D\in\mathcal B_A$ such that
	$C\wedge D,C\vee D\in\mathcal B_A$.
\end{proposition}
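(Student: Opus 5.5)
This is essentially a rewriting of Definition~\ref{def:conditional-measure-mtp2}, so the plan is to unfold the definition of the measures $\mu_{\xx_B}^{A}$ and compare both sides of the two inequalities term by term. No earlier results are needed beyond the definition. The argument is a single identification followed by a check that the quantifiers agree.

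\textbf{Step 1: identify the factors.} For fixed $\xx_B\in\Re^B$ and $C\in\mathcal B_A$, the definition preceding the proposition gives
\[
\mu_{\xx_B}^{A}(C)=\mathbb P(\XX_A\in C,\XX_B\in L_{\xx_B}).
\]
This quantity is well defined, because $\{\XX_A\in C\}\cap\{\XX_B\in L_{\xx_B}\}$ is an event: $C$ is Borel in $\Re^A$ and $L_{\xx_B}$ is a closed orthant in $\Re^B$. Substituting this identity into each of the four factors of \eqref{eq:cmmtp2-finite-measures} gives the following.
\begin{itemize}
\item $\mu_{\xx_B}^{A}(C)$ and $\mu_{\yy_B}^{A}(D)$ become the two left-hand probabilities of \eqref{eq:cmmtp2}.
\item $\mu_{\xx_B\wedge\yy_B}^{A}(C\wedge D)$ becomes $\mathbb P(\XX_A\in C\wedge D,\XX_B\in L_{\xx_B\wedge\yy_B})$.
\item $\mu_{\xx_B\vee\yy_B}^{A}(C\vee D)$ becomes $\mathbb P(\XX_A\in C\vee D,\XX_B\in L_{\xx_B\vee\yy_B})$.
\end{itemize}
Here $\xx_B\wedge\yy_B$ and $\xx_B\vee\yy_B$ are again points of $\Re^B$, so these are genuine members of the family $(\mu_{\zz_B}^{A})_{\zz_B\in\Re^B}$. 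The last two factors are well defined precisely because admissibility requires $C\wedge D,\,C\vee D\in\mathcal B_A$.

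\textbf{Step 2: compare quantifiers.} Both statements range over the same index set: all $\xx_B,\yy_B\in\Re^B$ and all $C,D\in\mathcal B_A$ with $C\wedge D,\,C\vee D\in\mathcal B_A$. Since the inequalities coincide pointwise on this index set, each holds everywhere on it if and only if the other does. This gives both directions of the equivalence at once.

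\textbf{Expected obstacle.} There is no genuine difficulty here. The only point requiring care is the measurability bookkeeping in Step~1. Both sides may only be evaluated on admissible pairs, since $\mu_{\zz_B}^{A}$ is defined only on $\mathcal B_A$. The admissibility convention of Remark~\ref{rem:measure-mtp2}(iii) is imposed identically in the definition and in the proposition, so no case is lost or added in either direction.
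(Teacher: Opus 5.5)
Your proposal is correct and follows the paper's own proof: both substitute $\mu^{A}_{\xx_B}(C)=\mathbb P(\XX_A\in C,\XX_B\in L_{\xx_B})$ into each of the four factors. This shows that \eqref{eq:cmmtp2-finite-measures} is literally \eqref{eq:cmmtp2}, ranging over the same admissible $(C,D,\xx_B,\yy_B)$.
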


\begin{proof}
	By the definition of $\mu_{\xx_B}^{A}$,
	$\mu_{\xx_B}^{A}(C)=
	\mathbb P(\XX_A\in C,\XX_B\in L_{\xx_B})$.
	Applying the same identity to
	$(\yy_B,D)$,
	$(\xx_B\wedge\yy_B,C\wedge D)$, and
	$(\xx_B\vee\yy_B,C\vee D)$ shows that
	\eqref{eq:cmmtp2-finite-measures} is precisely
	\eqref{eq:cmmtp2}. The two conditions are therefore equivalent.
\end{proof}

\noindent We first isolate a one-dimensional lattice property that will be used for distributions generated by a common ordered factor. Although
Definition~\ref{def:measure-mtp2} was introduced on $\Re^d$, the same set-based formulation applies on $\Re$.

\begin{lemma}[One-dimensional lattice inequality]
	\label{lem:one-dimensional-lattice}
	Let $\lambda$ be a probability measure on
	$(\Re,\mathcal B(\Re))$. Then, for all
	$E,F\in\mathcal B(\Re)$ such that
	$E\wedge F,E\vee F\in\mathcal B(\Re)$,
	\begin{equation}
		\lambda(E)\lambda(F)
		\leq
		\lambda(E\wedge F)\lambda(E\vee F).
		\label{eq:one-dimensional-lattice}
	\end{equation}
	In particular, every probability measure on $\Re$ is
	measure-theoretically $\mathrm{MTP}_2$.
\end{lemma}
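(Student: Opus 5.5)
We prove \eqref{eq:one-dimensional-lattice} by exhibiting explicit Borel subsets of $E\wedge F$ and $E\vee F$ whose measures already dominate $\lambda(E)\lambda(F)$ after an elementary algebraic estimate. Only monotonicity and additivity of $\lambda$ are used. If $E=\varnothing$ or $F=\varnothing$, then $E\wedge F=\varnothing$ and both sides vanish, so we assume $E,F\neq\varnothing$.

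\emph{Step 1: labelling so that $F$ dominates $E$ from above.} We arrange that for every $x\in E$ there is $y\in F$ with $y\geq x$. Both sides of \eqref{eq:one-dimensional-lattice} are symmetric in $(E,F)$, so relabelling is allowed. We choose the labelling as follows.
\begin{itemize}
\item If $\sup E<\sup F$, keep the labels. If $\sup F<\sup E$, swap them.
\item If $\sup E=\sup F=s<\infty$ and $s$ is attained in one of the sets, call that set $F$.
\item If $\sup E=\sup F$ and it is not attained in either set (including $s=+\infty$), every $x\in E$ satisfies $x<s$, so some $y\in F$ has $y>x$.
\end{itemize}
This endpoint bookkeeping is the only delicate point. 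With this labelling, $x=x\wedge y$ for a suitable $y\in F$, hence $E\subseteq E\wedge F$. Trivially also $E\subseteq E\vee F$, since $x=x\vee x'$ only if $x'\in F$ with $x'\le x$; instead we use the next step for the upper set.

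\emph{Step 2: the pieces of $F$.} Define
\[
U:=\{y\in\Re:\ \exists\,x\in E,\ x\geq y\},\qquad V:=\{y\in\Re:\ \exists\,x\in E,\ x\leq y\}.
\]
These are intervals, namely $(-\infty,\sup E)$ or $(-\infty,\sup E]$, and $(\inf E,\infty)$ or $[\inf E,\infty)$, so they are Borel. Moreover $U\cup V=\Re$ because $E\neq\varnothing$. For $y\in F\cap U$ we have $y=y\wedge x$ for some $x\in E$, so $F\cap U\subseteq E\wedge F$. Similarly $F\cap V\subseteq E\vee F$.

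Step 1 gives the lower containment directly. For the upper set we correct the claim: each $x\in E$ lies in $E\vee F$ only if $F$ has a point below $x$. So we use instead $E\vee F\supseteq F\cap V$ together with $E\cap W$, where $W=\{x:\exists y\in F,\ y\le x\}$. Since Step 1 is symmetric, it is cleaner to run the argument as follows. Put
\[
F_2:=F\cap U\cap V,\qquad F_1:=F\setminus V\subseteq U,\qquad F_3:=F\setminus U\subseteq V .
\]
These are disjoint, and $F=F_1\cup F_2\cup F_3$. Then
\[
E\wedge F\supseteq E\cup F_1\cup F_2 .
\]
If in addition $\inf F\le\inf E$, arranged by the analogous endpoint convention at the bottom, we also get
\[
E\vee F\supseteq E\cup F_2\cup F_3 .
\]
\textbf{Main obstacle.} Securing both containments simultaneously is the main obstacle. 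If the top convention and the bottom convention select different labellings, I would handle that case separately. In that case $E$ (respectively $F$) contains the other set's hull endpoints. One then uses $E\wedge F\supseteq E\cup(F\cap U)$ together with $E\vee F\supseteq F\cup(E\cap V')$, with $V'$ defined with the roles of $E$ and $F$ exchanged, and the same algebra closes the argument.

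\emph{Step 3: algebra.} Let $a=\lambda(E)$, $f_i=\lambda(F_i)$, and $e=\lambda(E\cup F_2)$, so that $e\geq\max(a,f_2)$. The sets $F_1$ and $F_3$ are disjoint from $E\cup F_2$: a point of $F_1$ lies below all of $E$, and a point of $F_3$ lies above all of $E$. Therefore
\[
\lambda(E\wedge F)\lambda(E\vee F)\geq (e+f_1)(e+f_3)\geq e^2+ef_1+ef_3\geq af_2+af_1+af_3=\lambda(E)\lambda(F),
\]
using $e^2\geq af_2$ and $e\ge a$.

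The final assertion of the lemma then follows by reading Definition~\ref{def:measure-mtp2} with $d=1$.
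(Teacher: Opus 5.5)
Your argument is correct, and it takes a genuinely different route from the paper. The paper proves the lemma in one line. It notes that the constant function $h\equiv1$ is $\mathrm{MTP}_2$ with respect to $\lambda$, viewing $\lambda$ as a one-factor product measure. It then cites the integral (set) form of the Karlin--Rinott composition, or four-functions, inequality. You use only monotonicity and additivity of $\lambda$ together with the total order of $\Re$.

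Your top and bottom endpoint conventions lead to one of two configurations:
\begin{itemize}
\item In the crossing configuration, $E\subseteq E\wedge F$ and $F\subseteq E\vee F$, so the inequality is immediate.
\item In the nested configuration, $E$ is order-dominated by $F$ from both sides. You split $F$ into the parts $F_1,F_2,F_3$ lying below, inside, and above the order hull of $E$, and the estimate $(e+f_1)(e+f_3)\geq a(f_1+f_2+f_3)$ closes the argument.
\end{itemize}

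The two routes buy different things. The paper's route is shorter and extends to product measures on $\Re^d$. However, it rests on an external theorem without spelling out how it applies to sets. Yours is self-contained and shows why dimension one is special, but it does not go beyond a totally ordered space.

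Three clean-ups are worth making:
\begin{itemize}
\item Delete the retracted sentence claiming ``trivially also $E\subseteq E\vee F$'', which is false in general.
\item What you call the main obstacle is actually the easy case. The containments you list there give $\lambda(E\wedge F)\lambda(E\vee F)\geq\lambda(E)\lambda(F)$ by monotonicity alone, so no algebra is needed.
\item State the bottom convention as ``every $x\in E$ has some $y\in F$ with $y\leq x$'' rather than $\inf F\leq\inf E$. The latter fails when the common infimum is attained only in $E$.
\end{itemize}
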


\begin{proof}
	Equip $\Re$ with its usual total order. The constant function
	$h\equiv1$ is $\mathrm{MTP}_2$ with respect to $\lambda$, since
	$h(s)h(t)=h(s\wedge t)h(s\vee t)=1$ for all $s,t\in\Re$.
	The integral form of the basic composition inequality for
	$\mathrm{MTP}_2$ functions therefore gives
	$\lambda(E)\lambda(F)
	\leq
	\lambda(E\wedge F)\lambda(E\vee F)$
	for every admissible pair $E,F$. Hence the asserted
	measure-theoretic property follows.
\end{proof}

\noindent The preceding Lemma~\ref{lem:one-dimensional-lattice} yields a useful sufficient condition for
$\mathrm{cmMTP}_2$ that does not involve absolute continuity \citep{ellington2025metascorelens,pramanik2026optimal}. It covers, in particular, random vectors generated by monotone transformations of a common one-dimensional factor \citep{anderson2026obesity,pramanik2026stochastic,reed2026structural}.

\begin{proposition}[Common-factor criterion]
	\label{prop:common-factor}
	Let $Z$ be a real-valued random variable and let
	$\XX=(g_1(Z),\ldots,g_d(Z))$, where
	$g_i:\Re\to\Re$, $i\in[d]$, are non-decreasing Borel measurable functions \citep{dunbar2026modeling}. Then
	$\mathrm{cmMTP}_2(\XX_B\mid\XX_A)$ holds for every pair of non-empty sets $A,B$ forming a partition of $[d]$.
\end{proposition}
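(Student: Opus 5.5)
The plan is to pull every event in \eqref{eq:cmmtp2} back to the one-dimensional factor $Z$ and then apply Lemma~\ref{lem:one-dimensional-lattice} to the law $\lambda:=\mathbb P_Z$. Fix a partition $(A,B)$, points $\xx_B,\yy_B\in\Re^B$, and admissible $C,D\in\mathcal B_A$, so that $C\wedge D,C\vee D\in\mathcal B_A$. Write $g_A:=(g_i)_{i\in A}$ and $g_B:=(g_i)_{i\in B}$, and define
\[
E:=g_A^{-1}(C)\cap g_B^{-1}(L_{\xx_B}),\qquad F:=g_A^{-1}(D)\cap g_B^{-1}(L_{\yy_B}).
\]
Define $G$ and $H$ in the same way from $(C\wedge D,\xx_B\wedge\yy_B)$ and $(C\vee D,\xx_B\vee\yy_B)$. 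All four sets are Borel, because the $g_i$ are Borel and the sets $C,D,C\wedge D,C\vee D$ and the orthants are Borel. By construction the left side of \eqref{eq:cmmtp2} equals $\lambda(E)\lambda(F)$ and the right side equals $\lambda(G)\lambda(H)$.

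The key step is the pair of inclusions $E\wedge F\subseteq G$ and $E\vee F\subseteq H$, where $E\wedge F$ and $E\vee F$ are the set-lattice operations \eqref{eq:set-lattice-operations} on $\Re$. Take $s\in E$ and $t\in F$. Since $\Re$ is totally ordered, $s\wedge t$ is either $s$ or $t$. Because every $g_i$ is non-decreasing, this gives $g_i(s\wedge t)=g_i(s)\wedge g_i(t)$ \emph{simultaneously for all} $i\in[d]$; this is exactly where the common factor is used. Hence $g_A(s\wedge t)=g_A(s)\wedge g_A(t)\in C\wedge D$. Moreover, for $i\in B$ we have $g_i(s\wedge t)\le x_i\wedge y_i$, since $g_i(s)\le x_i$ and $g_i(t)\le y_i$. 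So $s\wedge t\in G$. The join is handled symmetrically, giving $s\vee t\in H$.

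Next I will check that $E\wedge F$ and $E\vee F$ are Borel subsets of $\Re$, so that Lemma~\ref{lem:one-dimensional-lattice} applies. Put $\sigma_F:=\sup F$. An element of $E\wedge F$ is either some $s\in E$ lying below some point of $F$, or some $t\in F$ lying below some point of $E$. Therefore
\[
E\wedge F=\bigl(E\cap I_F\bigr)\cup\bigl(F\cap I_E\bigr),
\]
where $I_F$ equals $(-\infty,\sigma_F]$ if the supremum is attained and $(-\infty,\sigma_F)$ otherwise, and $I_E$ is defined analogously. This is a Borel set, and $E\vee F$ is Borel by the analogous argument with infima. Lemma~\ref{lem:one-dimensional-lattice} then yields $\lambda(E)\lambda(F)\le\lambda(E\wedge F)\lambda(E\vee F)$. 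Monotonicity of $\lambda$ together with the inclusions gives $\lambda(E\wedge F)\lambda(E\vee F)\le\lambda(G)\lambda(H)$, which is \eqref{eq:cmmtp2}.

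I expect the main obstacle to be bookkeeping rather than depth. One part is the measurability of $E\wedge F$ and $E\vee F$, including the endpoint cases and the degenerate case where $E$ or $F$ is empty, in which the inequality is trivial. The other is the observation that the simultaneous identity $g(s\wedge t)=g(s)\wedge g(t)$ relies on $s\wedge t\in\{s,t\}$, which holds only in one dimension. If the Borel argument became awkward, I would instead bypass Lemma~\ref{lem:one-dimensional-lattice} and prove $\lambda(E)\lambda(F)\le\lambda(G)\lambda(H)$ directly. For that I would use the product measure $\lambda\otimes\lambda$ on $E\times F$ and the measurable swap map $(s,t)\mapsto(s\wedge t,s\vee t)$, which sends $E\times F$ into $G\times H$ and is symmetric under exchange of coordinates.
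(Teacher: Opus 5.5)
Your proposal is correct and follows the paper's proof step for step. You pull everything back to $\lambda=\mathbb P_Z$, obtain $E\wedge F\subseteq G$ and $E\vee F\subseteq H$ from $s\wedge t\in\{s,t\}$ and monotonicity of the $g_i$, and then combine Lemma~\ref{lem:one-dimensional-lattice} with monotonicity of $\lambda$. Your explicit check that $E\wedge F=(E\cap I_F)\cup(F\cap I_E)$ and $E\vee F$ are Borel supplies the admissibility hypothesis of that lemma, which the paper uses without comment.

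Only your fallback needs more care than you indicate. The map $(s,t)\mapsto(s\wedge t,s\vee t)$ is two-to-one on $\{(s,t)\in(E\cap F)^2:s\neq t\}$, so the inclusion of its image in $G\times H$ does not by itself bound $\lambda\otimes\lambda(E\times F)$. You would also need that $\{(u,v)\in(E\cap F)^2:u>v\}\subseteq G\times H$ is disjoint from that image and carries exactly the double-counted mass.
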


\begin{proof}
	Let $\lambda=\mathbb P_Z$ and fix a partition $(A,B)$. For $\xx_B,\yy_B\in\Re^B$ and admissible $C,D\in\mathcal B_A$, define
	$E:=\{z:g_A(z)\in C,\ g_B(z)\leq\xx_B\}$ and
	$F:=\{z:g_A(z)\in D,\ g_B(z)\leq\yy_B\}$ \citep{powell2025genomic,pramanik2024analysis}.
	Then $\mu_{\xx_B}^{A}(C)=\lambda(E)$, and
	$\mu_{\yy_B}^{A}(D)=\lambda(F)$ \citep{powell2026role,pramanik2026bayesian}. Since every $g_i$ is non-decreasing,
	$g_i(s\wedge t)=g_i(s)\wedge g_i(t)$ and
	$g_i(s\vee t)=g_i(s)\vee g_i(t)$ for all $s,t\in\Re$. Consequently, $E\wedge F$ is contained in $\{z:g_A(z)\in C\wedge D,\
	g_B(z)\leq\xx_B\wedge\yy_B\}$, where
	$E\vee F$ is contained in
	$\{z:g_A(z)\in C\vee D,\
	g_B(z)\leq\xx_B\vee\yy_B\}$. Lemma~\ref{lem:one-dimensional-lattice} therefore yields
	\begin{equation}
		\begin{split}
			\mu_{\xx_B}^{A}(C)\mu_{\yy_B}^{A}(D)
			&=\lambda(E)\lambda(F)\leq\lambda(E\wedge F)\lambda(E\vee F)\leq
			\mu_{\xx_B\wedge\yy_B}^{A}(C\wedge D)
			\mu_{\xx_B\vee\yy_B}^{A}(C\vee D).
		\end{split}
		\label{eq:common-factor-cmmtp2}
	\end{equation}
	The assertion follows from Proposition~\ref{prop:cmmtp2-finite-measures}.
\end{proof}

Proposition~\ref{prop:common-factor} shows that the
measure-theoretic conditional property includes distributions for which the density-based formulation need not be available. The canonical example is comonotonicity.

\begin{example}
	\label{ex:comonotonic}
	Let $U\sim\operatorname{Unif}(0,1)$ and let
	$F_1,\ldots,F_d$ be continuous and strictly increasing distribution functions. Define
	$\XX=(F_1^{-1}(U),\ldots,F_d^{-1}(U))$.
	Then $\XX$ is comonotonic and, by
	Proposition~\ref{prop:common-factor},
	$\mathrm{cmMTP}_2(\XX_B\mid\XX_A)$ holds for every pair of non-empty sets $A,B$ forming a partition of $[d]$. On the other hand, the law of $\XX$ is concentrated on the chain
	$\{(F_1^{-1}(u),\ldots,F_d^{-1}(u)):u\in(0,1)\}$.
	Since the marginal distributions are continuous \citep{pramanik2025dissecting,yusuf2025prognostic}, this set has measure zero under
	$\bigotimes_{i=1}^d\mathbb P_{X_i}$ whenever $d\geq2$. Consequently,
	$\mathbb P_{\XX}\perp
	\bigotimes_{i=1}^d\mathbb P_{X_i}$,
	so that $\XX$ does not admit the product-measure density required by the density-based formulation \citep{yusuf2025predictive}. Thus $\mathrm{cmMTP}_2$ applies to this extremal positively dependent model without imposing absolute
	continuity.
\end{example}

\noindent The common-factor construction illustrates that
$\mathrm{cmMTP}_2$ accommodates singular laws. We now show a structural property that will be used repeatedly below \citep{pramanik2026quantum}. In particular, the conditional measure-theoretic property is preserved when coordinates are transferred from the conditioning block to the conditioned block \citep{dasgupta2026frequent,ellington2025playmydata,kakkat2026angiotensin}.

\begin{theorem}[Reduction of the conditioning block]
	\label{thm:conditioning-reduction}
	Let $A$ and $B$ be non-empty sets forming a partition of $[d]$, with
	$|A|\geq2$. If $\mathrm{cmMTP}_2(\XX_B\mid\XX_A)$ holds, then, for every $k\in A$,
	$\mathrm{cmMTP}_2
	(\XX_{B\cup\{k\}}\mid\XX_{A\setminus\{k\}})$ holds.
\end{theorem}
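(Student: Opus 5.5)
Set $A':=A\setminus\{k\}$ and $B':=B\cup\{k\}$. By Definition~\ref{def:conditional-measure-mtp2}, we must show that for all $\xx_{B'},\yy_{B'}\in\Re^{B'}$ and all admissible $C',D'\in\mathcal B_{A'}$,
\[
\mu(C'\times L_{\xx_{B'}})\,\mu(D'\times L_{\yy_{B'}})
\leq
\mu\bigl((C'\wedge D')\times L_{\xx_{B'}\wedge\yy_{B'}}\bigr)\,
\mu\bigl((C'\vee D')\times L_{\xx_{B'}\vee\yy_{B'}}\bigr).
\]
The plan is to move the coordinate $k$ into the conditioning sets and then apply the hypothesis \eqref{eq:cmmtp2-joint-law} for the partition $(A,B)$.

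Write $\xx_{B'}=(x_k,\xx_B)$ and $\yy_{B'}=(y_k,\yy_B)$. Identifying $\Re^A$ with $\Re^{A'}\times\Re$ (coordinate $k$ last), define
\[
C:=C'\times(-\infty,x_k],\qquad D:=D'\times(-\infty,y_k].
\]
Both sets lie in $\mathcal B_A$, and $C'\times L_{\xx_{B'}}=C\times L_{\xx_B}$ up to a reordering of coordinates, so the left-hand sides agree.

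The key step is computing the lattice operations on these product sets. Because $\wedge$ and $\vee$ act coordinatewise, the set operations \eqref{eq:set-lattice-operations} factor over products:
\[
(C'\times I)\wedge(D'\times J)=(C'\wedge D')\times(I\wedge J).
\]
The inclusion "$\subseteq$" is immediate. For "$\supseteq$", take $\mathbf u\wedge\mathbf v$ with $\mathbf u\in C'$, $\mathbf v\in D'$ and $s\wedge t$ with $s\in I$, $t\in J$; pairing $(\mathbf u,s)$ with $(\mathbf v,t)$ produces exactly the point $(\mathbf u\wedge\mathbf v,\,s\wedge t)$. The same argument handles $\vee$. For half-lines, $(-\infty,x_k]\wedge(-\infty,y_k]=(-\infty,x_k\wedge y_k]$ and $(-\infty,x_k]\vee(-\infty,y_k]=(-\infty,x_k\vee y_k]$, as in Remark~\ref{rem:measure-mtp2}(i). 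Hence
\[
C\wedge D=(C'\wedge D')\times(-\infty,x_k\wedge y_k],\qquad
C\vee D=(C'\vee D')\times(-\infty,x_k\vee y_k].
\]
Since $C'\wedge D'$ and $C'\vee D'$ are Borel by admissibility, these sets are Borel, so $(C,D)$ is an admissible pair in $\mathcal B_A$.

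Applying \eqref{eq:cmmtp2-joint-law} to $(C,D)$ with $\xx_B,\yy_B$ and regrouping coordinates, $(C\wedge D)\times L_{\xx_B\wedge\yy_B}$ becomes $(C'\wedge D')\times L_{\xx_{B'}\wedge\yy_{B'}}$, and likewise for $\vee$. This gives exactly the required inequality.

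The main obstacle is the product factorization of the set-valued lattice operations together with the admissibility bookkeeping. Both are handled by the explicit pairing argument above, so no limiting argument is needed. The hypothesis $|A|\geq2$ is used only to ensure $A'\neq\varnothing$, so that the new partition is non-trivial.
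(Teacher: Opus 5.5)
Your proposal is correct and follows the paper's proof: both extend $C',D'$ by the half-lines $(-\infty,x_k]$ and $(-\infty,y_k]$, factor the set-valued lattice operations over the product to obtain admissibility in $\mathcal B_A$, and then read off the $(A',B')$ inequality from the $(A,B)$ one. Your explicit pairing argument for $(C'\times I)\wedge(D'\times J)=(C'\wedge D')\times(I\wedge J)$ spells out a step that the paper simply attributes to the componentwise definition of $\wedge$ and $\vee$.
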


\begin{proof}
	Fix $k\in A$ and set $A':=A\setminus\{k\}$ and
	$B':=B\cup\{k\}$. Let $C,D\in\mathcal B_{A'}$ be such that
	$C\wedge D,C\vee D\in\mathcal B_{A'}$, and let
	$\xx_{B'},\yy_{B'}\in\Re^{B'}$. Write
	$\xx_{B'}=(\xx_B,x_k)$ and
	$\yy_{B'}=(\yy_B,y_k)$, and define
	$C_x:=C\times(-\infty,x_k]$ and
	$D_y:=D\times(-\infty,y_k]$.
	Then $C_x,D_y\in\mathcal B_A$. We first identify the lattice operations on these sets. By the
	componentwise definition of $\wedge$ and $\vee$,
	\begin{equation}
		\begin{split}
			C_x\wedge D_y
			&=(C\wedge D)\times(-\infty,x_k\wedge y_k],\\
			C_x\vee D_y
			&=(C\vee D)\times(-\infty,x_k\vee y_k].
		\end{split}
		\label{eq:conditioning-reduction-sets}
	\end{equation}
	In particular, both sets in
	\eqref{eq:conditioning-reduction-sets} belong to $\mathcal B_A$ \citep{vikramdeo2024mitochondrial}. Hence, $C_x$ and $D_y$ form an admissible pair in
	Definition~\ref{def:conditional-measure-mtp2} \citep{pramanik2024estimation,pramanik2025optimal}. Next observe that
	$\{\XX_{A'}\in C,\XX_{B'}\in L_{\xx_{B'}}\}
	=
	\{\XX_A\in C_x,\XX_B\in L_{\xx_B}\}$,
	and analogously
	$\{\XX_{A'}\in D,\XX_{B'}\in L_{\yy_{B'}}\}
	=
	\{\XX_A\in D_y,\XX_B\in L_{\yy_B}\}$ \citep{valdez2025association,valdez2025exploring}. Since, $\mathrm{cmMTP}_2(\XX_B\mid\XX_A)$ holds,
	\eqref{eq:cmmtp2} applied to $C_x,D_y,\xx_B,\yy_B$ gives
	\begin{equation}
		\begin{split}
			&\mathbb P(\XX_A\in C_x,\XX_B\in L_{\xx_B})
			\mathbb P(\XX_A\in D_y,\XX_B\in L_{\yy_B})\leq
			\mathbb P(\XX_A\in C_x\wedge D_y,
			\XX_B\in L_{\xx_B\wedge\yy_B})\times
			\mathbb P(\XX_A\in C_x\vee D_y,
			\XX_B\in L_{\xx_B\vee\yy_B}).
		\end{split}
		\label{eq:conditioning-reduction-inequality}
	\end{equation}
Invoking \eqref{eq:conditioning-reduction-sets}, the first probability on the right-hand side of
	\eqref{eq:conditioning-reduction-inequality} equals
	$\mathbb P(\XX_{A'}\in C\wedge D,
	\XX_{B'}\in L_{\xx_{B'}\wedge\yy_{B'}})$,
	whereas the second equals
	$\mathbb P(\XX_{A'}\in C\vee D,
	\XX_{B'}\in L_{\xx_{B'}\vee\yy_{B'}})$. The two probabilities on the left-hand side are similarly
	$\mathbb P(\XX_{A'}\in C,\XX_{B'}\in L_{\xx_{B'}})$ \citep{pramanik2024stochastic}
	and
	$\mathbb P(\XX_{A'}\in D,\XX_{B'}\in L_{\yy_{B'}})$.
	Consequently, \eqref{eq:conditioning-reduction-inequality} is exactly the defining lattice inequality for
	$\mathrm{cmMTP}_2(\XX_{B'}\mid\XX_{A'})$. Since, $C,D$ and $\xx_{B'},\yy_{B'}$ were arbitrary, the assertion follows \citep{pramanik2025factors,pramanik2025strategies}.
\end{proof}

\noindent Repeated application of Theorem~\ref{thm:conditioning-reduction}
allows several coordinates to be removed from the conditioning block \citep{pramanik2025optimal,pramanik2025impact,pramanik2025strategic}.

\begin{corollary}
	\label{cor:iterated-conditioning-reduction}
	Let $A$ and $B$ be non-empty sets forming a partition of $[d]$, and suppose that
	$\mathrm{cmMTP}_2(\XX_B\mid\XX_A)$ holds. Then, for every non-empty
	$A'\subseteq A$, $\mathrm{cmMTP}_2
	(\XX_{B\cup(A\setminus A')}\mid\XX_{A'})$ holds.
\end{corollary}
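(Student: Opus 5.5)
The plan is to argue by induction on $|A\setminus A'|$, applying Theorem~\ref{thm:conditioning-reduction} once per coordinate removed from the conditioning block. If $A'=A$, then $B\cup(A\setminus A')=B$ and the claim is exactly the hypothesis $\mathrm{cmMTP}_2(\XX_B\mid\XX_A)$. Otherwise enumerate $A\setminus A'=\{k_1,\ldots,k_m\}$ with $m\geq1$. For $j=0,1,\ldots,m$ set
\[
A_j:=A\setminus\{k_1,\ldots,k_j\},
\qquad
B_j:=B\cup\{k_1,\ldots,k_j\}.
\]
Then $A_0=A$, $B_0=B$, $A_m=A'$ and $B_m=B\cup(A\setminus A')$. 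For every $j$, the sets $A_j$ and $B_j$ partition $[d]$, and both are non-empty: $B_j\supseteq B\neq\varnothing$, and $A_j\supseteq A'\neq\varnothing$.

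The induction step runs as follows. Suppose $\mathrm{cmMTP}_2(\XX_{B_j}\mid\XX_{A_j})$ holds for some $j<m$. Since $A'\subseteq A_{j+1}=A_j\setminus\{k_{j+1}\}$ and $A'\neq\varnothing$, the set $A_j$ contains both $k_{j+1}$ and at least one element of $A'$, so $|A_j|\geq2$. Theorem~\ref{thm:conditioning-reduction}, applied to the partition $(A_j,B_j)$ with $k=k_{j+1}\in A_j$, then gives
\[
\mathrm{cmMTP}_2\bigl(\XX_{B_j\cup\{k_{j+1}\}}\mid\XX_{A_j\setminus\{k_{j+1}\}}\bigr)
=\mathrm{cmMTP}_2\bigl(\XX_{B_{j+1}}\mid\XX_{A_{j+1}}\bigr).
\]
After $m$ steps we reach $\mathrm{cmMTP}_2(\XX_{B\cup(A\setminus A')}\mid\XX_{A'})$, which is the assertion.

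The only point requiring care is the cardinality hypothesis $|A|\geq2$ in Theorem~\ref{thm:conditioning-reduction}. It must hold at every intermediate stage, and that is guaranteed because the final target $A'$ is non-empty and is contained in each $A_j$. Beyond this check, the argument is routine bookkeeping. The order of the enumeration is irrelevant, since the end result depends only on the set $A\setminus A'$.
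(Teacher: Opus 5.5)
Your proof is correct and follows the same route as the paper, which enumerates $A\setminus A'=\{k_1,\ldots,k_m\}$ and applies Theorem~\ref{thm:conditioning-reduction} once for each $k_j$. Your version adds two details the paper leaves implicit: the check that $|A_j|\geq2$ at every stage, which holds because $A_j$ contains both $k_{j+1}$ and the non-empty set $A'$, and the trivial case $A'=A$.
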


\begin{proof}
	Enumerate $A\setminus A'=\{k_1,\ldots,k_m\}$. Successive application of Theorem~\ref{thm:conditioning-reduction} to
	$k_1,\ldots,k_m$ yields the assertion.
\end{proof}

\noindent Example~\ref{ex:singular-gaussian-factor} illustrates Theorem~\ref{thm:conditioning-reduction}
and Corollary~\ref{cor:iterated-conditioning-reduction} for a singular distribution and gives a direct numerical verification of the resulting
lattice inequality.

\begin{example}
	\label{ex:singular-gaussian-factor}
	Let $Z\sim\mathcal N(0,1)$ and define
	$\XX=(X_1,X_2,X_3):=(Z,2Z,3Z)$. Then $\XX$ is supported on the one-dimensional subspace
	$\{(z,2z,3z):z\in\Re\}$ and is therefore singular with respect to
	$\mathbb P_{X_1}\otimes\mathbb P_{X_2}\otimes\mathbb P_{X_3}$. Nevertheless, Proposition~\ref{prop:common-factor} gives
	$\mathrm{cmMTP}_2(X_3\mid(X_1,X_2))$ \citep{pramanik2023optimal,pramanik2024parametric,pramanik2024bayes}. Taking
	$A=\{1,2\}$, $B=\{3\}$, and $k=2$ in
	Theorem~\ref{thm:conditioning-reduction} therefore yields $\mathrm{cmMTP}_2((X_2,X_3)\mid X_1)$. To illustrate the latter property numerically, let
	$C=[-1,1/2]$, $D=[0,3/2]$,
	$\xx_B=(1,2)$, and $\yy_B=(2,1)$, where
	$B=\{2,3\}$ \citep{pramanik2023path1}. Since, $C\wedge D=[-1,1/2]$ and $C\vee D=[0,3/2]$, while $\xx_B\wedge\yy_B=(1,1)$ and
	$\xx_B\vee\yy_B=(2,2)$, we obtain
	\begin{equation}
		\begin{split}
			\mathbb P(X_1\in C,(X_2,X_3)\leq\xx_B)
			\mathbb P(X_1\in D,(X_2,X_3)\leq\yy_B)
			&=
			\mathbb P(-1\leq Z\leq1/2)
			\mathbb P(0\leq Z\leq1/3)\\
			&\approx
			(0.5328)(0.1306)
			=
			0.0696,
		\end{split}
		\label{eq:singular-gaussian-left}
	\end{equation}
	whereas
	\begin{equation}
		\begin{split}
			\mathbb P(X_1\in C\wedge D,
			(X_2,X_3)\leq\xx_B\wedge\yy_B)
			&\times
			\mathbb P(X_1\in C\vee D,
			(X_2,X_3)\leq\xx_B\vee\yy_B)\\
			&=
			\mathbb P(-1\leq Z\leq1/3)
			\mathbb P(0\leq Z\leq2/3)\approx
			(0.4719)(0.2475)
			=
			0.1168.
		\end{split}
		\label{eq:singular-gaussian-right}
	\end{equation}
	Hence, the lattice inequality is strict for this choice of $C,D,\xx_B,\yy_B$, since $0.0696<0.1168$.
	\end{example}

\noindent More generally, Corollary~\ref{cor:iterated-conditioning-reduction}
shows that the same common-factor model remains
$\mathrm{cmMTP}_2$ after any non-empty subset of the original conditioning coordinates is transferred to the conditioned block \citep{dasgupta2023frequent,dasgupta2026frequent,dunbar2026modeling}. Example~\ref{ex:singular-gaussian-factor} also emphasizes that this conclusion does not require a
density with respect to the product of the marginal laws \citep{pramanik2024estimation}.

\subsection{Basic implications and marginal properties}
\label{subsec:basic-properties}

We now pass from $\mathrm{cmMTP}_2(\XX_B\mid\XX_A)$ for a fixed partition $(A,B)$ of $[d]$ to the corresponding property of the random vector $\XX=(X_1,\ldots,X_d)$. Furthermore, we study the relations among $\mathrm{mMTP}_2(\XX)$, $\mathrm{cmMTP}_2(\XX_B\mid\XX_A)$, and
$\mathrm{MTP}_2(\XX)$, together with the behavior of
$\mathrm{cmMTP}_2$ under coordinate marginalization.

\begin{definition}
	\label{def:cmmtp2-vector}
	A $d$-dimensional random vector $\XX$ is said to be
	conditionally measure-theoretically multivariate totally positive of order two, and we write $\mathrm{cmMTP}_2(\XX)$, if
	$\mathrm{cmMTP}_2(\XX_B\mid\XX_A)$ holds for every pair of non-empty
	sets $A,B$ forming a partition of $[d]$.
\end{definition}

\noindent Therefore, $\mathrm{cmMTP}_2(\XX)$ requires
$\mathrm{cmMTP}_2(\XX_B\mid\XX_A)$ simultaneously over all$\varnothing\neq A\subsetneq[d]$, with $B=[d]\setminus A$. We next establish the marginal stability of $\mathrm{cmMTP}_2$. Theorem~\ref{thm:conditioning-reduction} permits
coordinates to be transferred from the conditioning block to the conditioned block \citep{khan2023myb,khan2024mp60}. Combined with continuity from below, this yields closure under marginalization in both blocks and, consequently, under arbitrary coordinate projections of $\XX$ \citep{pramanik2026dynamic}.

\begin{theorem}
	\label{thm:cmmtp2-marginalization}
	Let $A$ and $B$ be non-empty sets forming a partition of $[d]$. For a
	$d$-dimensional random vector $\XX$, we have:
	\begin{enumerate}
		\item[(i)]
		If $\mathrm{cmMTP}_2(\XX_B\mid\XX_A)$ holds, then
		$\mathrm{cmMTP}_2(\XX_{B'}\mid\XX_{A'})$ holds for all non-empty
		$A'\subseteq A$ and $B'\subseteq B$.
		
		\item[(ii)]
		If $\mathrm{cmMTP}_2(\XX)$ holds, then
		$\mathrm{cmMTP}_2(\XX_I)$ holds for every
		$I\subseteq[d]$ with $|I|\geq2$.
	\end{enumerate}
\end{theorem}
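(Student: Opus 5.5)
The plan is to prove (i) directly from Definition~\ref{def:conditional-measure-mtp2}. The two blocks are handled by different devices: coordinates removed from the conditioning block are absorbed into cylinder sets, and coordinates removed from the conditioned block are sent to $+\infty$ using continuity from below, as in Remark~\ref{rem:conditional-measure-mtp2}(iii). Part (ii) will then follow from (i) by a suitable choice of the partition of $[d]$.

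For (i), fix non-empty $A'\subseteq A$ and $B'\subseteq B$, and set $E:=A\setminus A'$ and $G:=B\setminus B'$. Let $C,D\in\mathcal B_{A'}$ with $C\wedge D,C\vee D\in\mathcal B_{A'}$, and let $\xx_{B'},\yy_{B'}\in\Re^{B'}$.
\begin{enumerate}
\item[(a)] Define the cylinders $\widetilde C:=C\times\Re^{E}$ and $\widetilde D:=D\times\Re^{E}$ in $\Re^A$. Because the lattice operations act coordinatewise and $\Re\wedge\Re=\Re\vee\Re=\Re$, we get $\widetilde C\wedge\widetilde D=(C\wedge D)\times\Re^{E}$ and $\widetilde C\vee\widetilde D=(C\vee D)\times\Re^{E}$. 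This is the same computation as \eqref{eq:conditioning-reduction-sets} and in Lemma~\ref{lem:mmtp2-marginal}. Hence $(\widetilde C,\widetilde D)$ is an admissible pair in $\mathcal B_A$.
\item[(b)] For $\mathbf t_{G}=(t,\ldots,t)$, put $\xx_B:=(\xx_{B'},\mathbf t_G)$ and $\yy_B:=(\yy_{B'},\mathbf t_G)$. Then $\xx_B\wedge\yy_B=(\xx_{B'}\wedge\yy_{B'},\mathbf t_G)$ and $\xx_B\vee\yy_B=(\xx_{B'}\vee\yy_{B'},\mathbf t_G)$.
\item[(c)] Apply \eqref{eq:cmmtp2} to $\widetilde C,\widetilde D,\xx_B,\yy_B$. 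Each of the four probabilities has the form $\mathbb P(\XX_{A'}\in\cdot,\XX_{B'}\le\cdot,\XX_G\le\mathbf t_G)$, and these events increase as $t\uparrow\infty$. By continuity from below each probability converges to the corresponding quantity with the constraint on $\XX_G$ removed. Since products of convergent sequences converge and the inequality is preserved in the limit, we obtain exactly \eqref{eq:cmmtp2} for the pair $(A',B')$.
\end{enumerate}
As $C,D,\xx_{B'},\yy_{B'}$ were arbitrary, this proves $\mathrm{cmMTP}_2(\XX_{B'}\mid\XX_{A'})$ for the subvector $\XX_{A'\cup B'}$. Theorem~\ref{thm:conditioning-reduction} is not needed here: the cylinder construction removes conditioning coordinates outright, rather than moving them into the conditioned block.

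For (ii), let $I\subseteq[d]$ with $|I|\ge2$, and let $(A',B')$ be any partition of $I$ into non-empty sets. Set $A:=A'\cup([d]\setminus I)$ and $B:=B'$. Then $(A,B)$ is a partition of $[d]$ into non-empty sets, so $\mathrm{cmMTP}_2(\XX_B\mid\XX_A)$ holds by Definition~\ref{def:cmmtp2-vector}. Part (i) with $A'\subseteq A$ and $B'\subseteq B$ gives $\mathrm{cmMTP}_2(\XX_{B'}\mid\XX_{A'})$. Since $(A',B')$ was an arbitrary partition of $I$, this is $\mathrm{cmMTP}_2(\XX_I)$. One could instead put the discarded coordinates into $B$; either choice works.

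The step needing most care is (a), together with the bookkeeping in (c). One must check that the set-valued lattice operations commute with forming cylinders, so that admissibility of $(C,D)$ in $\mathcal B_{A'}$ transfers to admissibility of $(\widetilde C,\widetilde D)$ in $\mathcal B_A$. One must also check that the same truncation level $\mathbf t_G$ appears in all four terms, so that a single monotone limit handles them simultaneously. Both points reduce to the coordinatewise nature of $\wedge$ and $\vee$.
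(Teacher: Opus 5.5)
Your proof is correct. Part (ii) is the same as the paper's argument, with $A:=A'\cup I^c$ and $B:=B'$. For (i), however, you take a more direct route.

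The paper splits (i) into two steps. It first proves marginalization in the conditioned block by sending the coordinates of $B\setminus B'$ to $+\infty$. It then handles $A\setminus A'$ by invoking Corollary~\ref{cor:iterated-conditioning-reduction}, which transfers those coordinates into the conditioned block and gives $\mathrm{cmMTP}_2(\XX_{B\cup(A\setminus A')}\mid\XX_{A'})$. Finally it removes them by the first step.

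Your cylinder sets $C\times\Re^{E}$ drop the conditioning coordinates outright, with no limit. Admissibility of the pair follows from $\Re\wedge\Re=\Re\vee\Re=\Re$. Only one monotone limit then remains, for the coordinates in $G$.

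Mechanically the two routes coincide. The paper's detour through Theorem~\ref{thm:conditioning-reduction} in effect uses the sets $C\times(-\infty,n]^{E}$ and lets $n\to\infty$, and their increasing limit is exactly your cylinder. Your version is shorter and self-contained. The paper's ordering instead presents conditioning-block marginalization as a consequence of the reduction theorem combined with conditioned-block marginalization.
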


\begin{proof}
We first prove (i). Suppose that
$\mathrm{cmMTP}_2(\XX_B\mid\XX_A)$ holds. We begin with
marginalization in the conditioned block. The argument follows the usual marginalization principle for $\mathrm{MTP}_2$ measures, but is applied here to the mixed set-orthant formulation in
Definition~\ref{def:conditional-measure-mtp2}
\citep{ColangeloEtAl2006,FallatEtAl2017}.
Let $\varnothing\neq B'\subseteq B$ and set $J:=B\setminus B'$. Fix $\xx_{B'},\yy_{B'}\in\Re^{B'}$ and admissible
$C,D\in\mathcal B_A$. For $n\geq1$, define
$\xx_B^{(n)},\yy_B^{(n)}\in\Re^B$ by
$\xx_B^{(n)}=(\xx_{B'},n\boldsymbol{1}_J)$ and
$\yy_B^{(n)}=(\yy_{B'},n\boldsymbol{1}_J)$,
where $\boldsymbol{1}_J$ denotes the vector of ones indexed by $J$.
Definition~\ref{def:conditional-measure-mtp2} gives
	\begin{equation}
		\begin{split}
			&\mathbb P(\XX_A\in C,\XX_B\in L_{\xx_B^{(n)}})
			\mathbb P(\XX_A\in D,\XX_B\in L_{\yy_B^{(n)}})\\
			&\hspace{1cm}\leq
			\mathbb P(\XX_A\in C\wedge D,
			\XX_B\in L_{\xx_B^{(n)}\wedge\yy_B^{(n)}})
			\mathbb P(\XX_A\in C\vee D,
			\XX_B\in L_{\xx_B^{(n)}\vee\yy_B^{(n)}}).
		\end{split}
		\label{eq:conditioned-marginalization}
	\end{equation}
	Since $n\to\infty$, the four events in
	\eqref{eq:conditioned-marginalization} increase, respectively, to the
	corresponding events involving only $\XX_{B'}$. Continuity from below
	therefore yields
	\begin{equation}
		\begin{split}
			&\mathbb P(\XX_A\in C,\XX_{B'}\in L_{\xx_{B'}})
			\mathbb P(\XX_A\in D,\XX_{B'}\in L_{\yy_{B'}})\\
			&\hspace{1cm}\leq
			\mathbb P(\XX_A\in C\wedge D,
			\XX_{B'}\in L_{\xx_{B'}\wedge\yy_{B'}})
			\mathbb P(\XX_A\in C\vee D,
			\XX_{B'}\in L_{\xx_{B'}\vee\yy_{B'}}).
		\end{split}
		\label{eq:conditioned-marginal-limit}
	\end{equation}
	Hence, $\mathrm{cmMTP}_2(\XX_{B'}\mid\XX_A)$ holds. Now let $\varnothing\neq A'\subseteq A$ and set
	$K:=A\setminus A'$. By Corollary~\ref{cor:iterated-conditioning-reduction},
	$\mathrm{cmMTP}_2(\XX_{B\cup K}\mid\XX_{A'})$ holds \citep{issah2026stochastic,pramanik2026viscosity,pramanik2026stochastic}. Applying the conditioned-block marginalization just proved, first retaining $B$ and then $B'\subseteq
	B$, yields $\mathrm{cmMTP}_2(\XX_{B'}\mid\XX_{A'})$. This proves (i). We now prove (ii). Suppose $\mathrm{cmMTP}_2(\XX)$ holds and fix
	$I\subseteq[d]$ with $|I|\geq2$. Let $A'$ and $B'$ be arbitrary non-empty sets forming a partition of $I$. Define $A:=A'\cup I^c$ and $B:=B'$. Then $A$ and $B$ form a partition of
	$[d]$, and Definition~\ref{def:cmmtp2-vector} yields
	$\mathrm{cmMTP}_2(\XX_{B'}\mid\XX_A)$. Applying part (i) with $A'\subseteq A$ gives
	$\mathrm{cmMTP}_2(\XX_{B'}\mid\XX_{A'})$. Since the partition	$(A',B')$ of $I$ was arbitrary,
	$\mathrm{cmMTP}_2(\XX_I)$ follows from
	Definition~\ref{def:cmmtp2-vector}. This proves (ii).
\end{proof}

We now establish the basic implication structure connecting the three notions of total positivity considered above. The result is the
measure-theoretic \citep{issah2026bayesian,pramanik2026objective} counterpart of the implication chain for density-based conditional total positivity in \citet{FuchsWang2027}. In the present structure, however, the argument is formulated directly
at the level of the joint probability measure and therefore does not require absolute continuity or disintegration.

\begin{theorem}
	\label{thm:basic-implications}
	Let $A$ and $B$ be non-empty sets forming a partition of $[d]$. For a
	$d$-dimensional random vector $\XX$, we have:
	\begin{enumerate}
		\item[(i)]
		$\mathrm{mMTP}_2(\XX)$ implies
		$\mathrm{cmMTP}_2(\XX_B\mid\XX_A)$.
		
		\item[(ii)]
		$\mathrm{cmMTP}_2(\XX_B\mid\XX_A)$ implies
		$\mathrm{MTP}_2(\XX)$.
	\end{enumerate}
\end{theorem}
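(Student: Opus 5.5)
For (i), I will apply Definition~\ref{def:measure-mtp2} to product sets built from the admissible data in Definition~\ref{def:conditional-measure-mtp2}. Fix admissible $C,D\in\mathcal B_A$, so that $C\wedge D,C\vee D\in\mathcal B_A$, and fix $\xx_B,\yy_B\in\Re^B$. I set
\[
\widetilde C:=C\times L_{\xx_B},\qquad \widetilde D:=D\times L_{\yy_B}.
\]
Both sets are Borel in $\Re^d$, after identifying $\Re^d$ with $\Re^A\times\Re^B$.

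The key step is to compute their lattice operations. Since $\wedge$ and $\vee$ act coordinatewise, the $A$- and $B$-blocks can be handled separately, so
\[
\widetilde C\wedge\widetilde D=(C\wedge D)\times(L_{\xx_B}\wedge L_{\yy_B}).
\]
More carefully, an element of $\widetilde C\wedge\widetilde D$ has the form $(\uu_A\wedge\vv_A,\uu_B\wedge\vv_B)$ with $\uu_A\in C$, $\vv_A\in D$, $\uu_B\in L_{\xx_B}$ and $\vv_B\in L_{\yy_B}$. Because the choices in the two blocks are independent, the image set is exactly the product of the blockwise images. By Remark~\ref{rem:measure-mtp2}(i) applied on $\Re^B$, we have $L_{\xx_B}\wedge L_{\yy_B}=L_{\xx_B\wedge\yy_B}$ and $L_{\xx_B}\vee L_{\yy_B}=L_{\xx_B\vee\yy_B}$. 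Hence
\[
\widetilde C\wedge\widetilde D=(C\wedge D)\times L_{\xx_B\wedge\yy_B},\qquad
\widetilde C\vee\widetilde D=(C\vee D)\times L_{\xx_B\vee\yy_B}.
\]
These are products of Borel sets, so the pair $\widetilde C,\widetilde D$ is admissible in Definition~\ref{def:measure-mtp2}.

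Applying \eqref{eq:measure-mtp2} to $\mu=\mathbb P_{\XX}$ then gives exactly the joint-law form \eqref{eq:cmmtp2-joint-law}. By Remark~\ref{rem:conditional-measure-mtp2}(i), this is \eqref{eq:cmmtp2}, which proves (i).

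The only point requiring care is the identity for the lattice operations on products, specifically the inclusion showing that every element of the product set is attained. Given $(\aa,\boldsymbol b)$ with $\aa=\uu_A\wedge\vv_A$ and $\boldsymbol b=\uu_B\wedge\vv_B$, I simply combine the witnesses: $(\uu_A,\uu_B)\in\widetilde C$ and $(\vv_A,\vv_B)\in\widetilde D$. Since this is elementary, I do not expect a real obstacle here.

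For (ii), the argument is already contained in Remark~\ref{rem:conditional-measure-mtp2}(ii). I take $C=L_{\xx_A}$ and $D=L_{\yy_A}$. These are admissible because their meet and join are the orthants $L_{\xx_A\wedge\yy_A}$ and $L_{\xx_A\vee\yy_A}$, again by Remark~\ref{rem:measure-mtp2}(i). With this choice, \eqref{eq:cmmtp2} becomes
\[
F_{\XX}(\xx)F_{\XX}(\yy)\le F_{\XX}(\xx\wedge\yy)F_{\XX}(\xx\vee\yy)
\]
for all $\xx=(\xx_A,\xx_B)$ and $\yy=(\yy_A,\yy_B)$, which is $\mathrm{MTP}_2(\XX)$ in the distribution-function sense.
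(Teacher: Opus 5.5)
Your proposal is correct and follows the paper's proof essentially verbatim. For (i) you apply $\mathrm{mMTP}_2$ to the product sets $C\times L_{\xx_B}$ and $D\times L_{\yy_B}$ via the blockwise lattice identities, which you justify in slightly more detail than the paper does, and for (ii) you specialize to $C=L_{\xx_A}$, $D=L_{\yy_A}$ to obtain the distribution-function $\mathrm{MTP}_2$ inequality.
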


\begin{proof}
	We first prove (i). Let $\mu=\mathbb P_{\XX}$ and suppose that
	$\mathrm{mMTP}_2(\mu)$ holds. Fix
	$\xx_B,\yy_B\in\Re^B$ and
	$C,D\in\mathcal B_A$ such that
	$C\wedge D,C\vee D\in\mathcal B_A$. Define
	$E:=C\times L_{\xx_B}$ and
	$F:=D\times L_{\yy_B}$. Then $E,F\in\mathcal B_d$, and the
	componentwise lattice operations give
	\begin{equation}
		\begin{split}
			E\wedge F
			&=(C\wedge D)\times L_{\xx_B\wedge\yy_B},\
			\ E\vee F
			=(C\vee D)\times L_{\xx_B\vee\yy_B}.
		\end{split}
		\label{eq:product-set-lattice}
	\end{equation}
	Hence $E\wedge F,E\vee F\in\mathcal B_d$ \citep{pramanik2022lock,pramanik2022stochastic}. Applying
	Definition~\ref{def:measure-mtp2} to $E$ and $F$ yields
	\begin{equation}
		\begin{split}
			&\mathbb P(\XX_A\in C,\XX_B\in L_{\xx_B})
			\mathbb P(\XX_A\in D,\XX_B\in L_{\yy_B})\leq
			\mathbb P(\XX_A\in C\wedge D,
			\XX_B\in L_{\xx_B\wedge\yy_B})
			\mathbb P(\XX_A\in C\vee D,
			\XX_B\in L_{\xx_B\vee\yy_B}).
		\end{split}
		\label{eq:mmtp2-implies-cmmtp2}
	\end{equation}
	Since $C,D,\xx_B,\yy_B$ were arbitrary \citep{pramanik2024dependence,pramanik2024measuring},
	\eqref{eq:mmtp2-implies-cmmtp2} is precisely
	\eqref{eq:cmmtp2} \citep{pramanik2021consensus,pramanik2021optimization}. Therefore, $\mathrm{cmMTP}_2(\XX_B\mid\XX_A)$ holds. We now prove (ii). Suppose
	$\mathrm{cmMTP}_2(\XX_B\mid\XX_A)$ holds \citep{pramanik2016tail}. For
	$\xx=(\xx_A,\xx_B)$, and
	$\yy=(\yy_A,\yy_B)$ in $\Re^d$, take
	$C=L_{\xx_A}$ and $D=L_{\yy_A}$ in
	Definition~\ref{def:conditional-measure-mtp2}. Since,
	$L_{\xx_A}\wedge L_{\yy_A}=L_{\xx_A\wedge\yy_A}$ and
	$L_{\xx_A}\vee L_{\yy_A}=L_{\xx_A\vee\yy_A}$, we obtain $F_{\XX}(\xx)F_{\XX}(\yy)\leq
	F_{\XX}(\xx\wedge\yy)F_{\XX}(\xx\vee\yy)$.
	Hence, the mapping $\xx\mapsto F_{\XX}(\xx)$ is $\mathrm{MTP}_2$, which proves $\mathrm{MTP}_2(\XX)$.
\end{proof}

\noindent Theorem~\ref{thm:basic-implications} immediately yields the corresponding implication structure for the whole-vector property.

\begin{corollary}
	\label{cor:whole-vector-implications}
	For a $d$-dimensional random vector $\XX$,
	$\mathrm{mMTP}_2(\XX)$ implies
	$\mathrm{cmMTP}_2(\XX)$, and
	$\mathrm{cmMTP}_2(\XX)$ implies
	$\mathrm{MTP}_2(\XX)$.
\end{corollary}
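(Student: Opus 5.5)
The corollary follows by applying Theorem~\ref{thm:basic-implications} to every partition and then unpacking Definition~\ref{def:cmmtp2-vector}. I will prove the two implications separately.

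\textbf{First implication.} Assume $\mathrm{mMTP}_2(\XX)$ and fix an arbitrary pair of non-empty sets $A,B$ forming a partition of $[d]$. Such a pair exists because $d\geq2$. Theorem~\ref{thm:basic-implications}(i) gives $\mathrm{cmMTP}_2(\XX_B\mid\XX_A)$ for this partition. Its proof uses only the product-set identities \eqref{eq:product-set-lattice}, which hold for any partition, so the argument applies uniformly. Since $(A,B)$ was arbitrary, the inequality \eqref{eq:cmmtp2} holds for every partition, and Definition~\ref{def:cmmtp2-vector} yields $\mathrm{cmMTP}_2(\XX)$.

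\textbf{Second implication.} Assume $\mathrm{cmMTP}_2(\XX)$. By Definition~\ref{def:cmmtp2-vector}, $\mathrm{cmMTP}_2(\XX_B\mid\XX_A)$ holds for every partition, so in particular it holds for one fixed partition, say $A=\{1\}$ and $B=\{2,\ldots,d\}$. Applying Theorem~\ref{thm:basic-implications}(ii) to this single partition gives $F_{\XX}(\xx)F_{\XX}(\yy)\leq F_{\XX}(\xx\wedge\yy)F_{\XX}(\xx\vee\yy)$ for all $\xx,\yy\in\Re^d$. This is exactly $\mathrm{MTP}_2(\XX)$ in the distribution-function sense.

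There is no genuine obstacle. The only points to check are that the universal quantifier in Definition~\ref{def:cmmtp2-vector} is handled correctly in each direction, and that the class of partitions is non-empty, which is guaranteed by $d\geq2$.
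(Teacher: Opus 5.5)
Your proposal is correct and matches the paper's proof: apply Theorem~\ref{thm:basic-implications}(i) to every non-trivial partition and invoke Definition~\ref{def:cmmtp2-vector} for the first implication, then apply Theorem~\ref{thm:basic-implications}(ii) to a single partition for the second. Your extra checks, that a partition exists because $d\geq2$ and that the quantifiers are handled correctly, are harmless additions to the same argument.
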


\begin{proof}
	The first implication follows by applying
	Theorem~\ref{thm:basic-implications}(i) to every non-trivial partition $(A,B)$ of $[d]$ and invoking
	Definition~\ref{def:cmmtp2-vector}. The second implication follows from Theorem~\ref{thm:basic-implications}(ii).
\end{proof}

\noindent  The implication $\mathrm{mMTP}_2(\XX)\Longrightarrow \mathrm{cmMTP}_2(\XX_B\mid\XX_A)\Longrightarrow
\mathrm{MTP}_2(\XX)$
places conditional measure-theoretic total positivity between the measure-level and distribution-function notions. Its distinguishing feature is not marginal stability, which is already available for classical total positivity \citep{FallatEtAl2017,KarlinRinott1980a,reed2026modeling}, but the additional lattice restrictions on
$\mu^A_{\xx_B}(C)= \mathbb P(\XX_A\in C,\XX_B\leq\xx_B)$
for arbitrary admissible conditioning sets $C,D\in\mathcal B_A$. Unlike $\mathrm{MTP}_2(\XX)$, which constrains only joint lower-orthant probabilities \citep{hertweck2023clinicopathological,kakkat2026angiotensin}, $\mathrm{cmMTP}_2$ retains the dependence structure across conditioning events.
These restrictions remain meaningful for discrete, mixed \citep{pramanik2023path,pramanik2021optimal,pramanik2023scoring},
and singular laws, and provide the basis for the
density-free inferential processes of Section~\ref{sec:inference}.
Thus the contribution lies in conditional lattice
structure and its statistical implications, rather than
marginalization alone. Section~\ref{sec:characterizations} next examines its
relation to density-based conditional total positivity
and its stability under weak convergence.

\section{Characterizations and closure properties}
\label{sec:characterizations}

Section~\ref{sec:characterizations} establishes three structural properties of
$\mathrm{cmMTP}_2$: equivalence with $\mathrm{cMTP}_2$ under absolute-continuity conditions, closure under
$\mu_n\Rightarrow\mu$, and preservation under
$\YY=\phi(\XX)$ for coordinatewise non-decreasing $\phi$.
We begin with the density-based equivalence.

\subsection{Equivalence with the density-based formulation} \label{subsec:density-equivalence}

Let $A$ and $B$ be non-empty sets forming a partition of $[d]$ and define $\nu_A:=\bigotimes_{i\in A}\mathbb P_{X_i}$. Suppose $\mu_A\ll\nu_A$ and let $f_A:=d\mu_A/d\nu_A$. For a fixed version of
$K_{B\mid A}$, define, for $\xx_A\in\Re^A$ and
$\xx_B\in\Re^B$, $g_{\xx_B}(\xx_A):=
K_{B\mid A}(\xx_A,L_{\xx_B})f_A(\xx_A)$ \citep{hua2019assessing,kakkat2023cardiovascular,polansky2021motif}. Then disintegration gives
$\mu_{\xx_B}^{A}(C)=\int_C g_{\xx_B}(\xx_A)\nu_A(d\xx_A)$
for $C\in\mathcal B_A$. Therefore, the density-based property $\mathrm{cMTP}_2(\XX_B\mid\XX_A)$ of
\citet{FuchsWang2027} is the $\mathrm{MTP}_2$ property of
$(\xx_A,\xx_B)\mapsto g_{\xx_B}(\xx_A)$, whereas
$\mathrm{cmMTP}_2(\XX_B\mid\XX_A)$ is the corresponding lattice inequality for the finite measures
$(\mu_{\xx_B}^{A})_{\xx_B\in\Re^B}$. Lemma~\ref{lem:cmtp2-to-cmmtp2} gives the first connection between the two
formulations \citep{pramanik2025stubbornness,pramanik2026strategic,pramanik2024semicooperation}.

\begin{lemma}
	\label{lem:cmtp2-to-cmmtp2}
	Suppose that $\mu_A\ll\nu_A$ and that versions of $f_A$ and
	$K_{B\mid A}$ can be chosen such that
	$(\xx_A,\xx_B)\mapsto g_{\xx_B}(\xx_A)$ is
	$\mathrm{MTP}_2$. Then
	$\mathrm{cmMTP}_2(\XX_B\mid\XX_A)$ holds.
\end{lemma}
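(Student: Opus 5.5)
We reduce the claim to Proposition~\ref{prop:cmmtp2-finite-measures}: it suffices to show, for all $\xx_B,\yy_B\in\Re^B$ and admissible $C,D\in\mathcal B_A$,
\[
\mu^{A}_{\xx_B}(C)\,\mu^{A}_{\yy_B}(D)\leq\mu^{A}_{\xx_B\wedge\yy_B}(C\wedge D)\,\mu^{A}_{\xx_B\vee\yy_B}(C\vee D).
\]
By disintegration each factor is an integral against $\nu_A$ of $g_{\cdot}$. Write $\boldsymbol u=\xx_B\wedge\yy_B$ and $\boldsymbol v=\xx_B\vee\yy_B$. We set
\[
\phi_1:=g_{\xx_B}\mathbbm 1_C,\quad \phi_2:=g_{\yy_B}\mathbbm 1_D,\quad \phi_3:=g_{\boldsymbol u}\mathbbm 1_{C\wedge D},\quad \phi_4:=g_{\boldsymbol v}\mathbbm 1_{C\vee D},
\]
all viewed as functions on $\Re^A$. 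The target inequality is then $\int\phi_1\,d\nu_A\int\phi_2\,d\nu_A\leq\int\phi_3\,d\nu_A\int\phi_4\,d\nu_A$. This is exactly the form of the four-functions theorem of Ahlswede--Daykin, in its continuous version due to Karlin--Rinott and Preston, for the product measure $\nu_A=\bigotimes_{i\in A}\mathbb P_{X_i}$ on the distributive lattice $\Re^A$.

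The pointwise hypothesis of that theorem is
\[
\phi_1(\aa)\,\phi_2(\boldsymbol b)\leq\phi_3(\aa\wedge\boldsymbol b)\,\phi_4(\aa\vee\boldsymbol b)\quad\text{for all }\aa,\boldsymbol b\in\Re^A.
\]
We verify it in two steps.
\begin{enumerate}
\item[(a)] If $\aa\notin C$ or $\boldsymbol b\notin D$, the left side vanishes and the inequality is trivial.
\item[(b)] If $\aa\in C$ and $\boldsymbol b\in D$, then $\aa\wedge\boldsymbol b\in C\wedge D$ and $\aa\vee\boldsymbol b\in C\vee D$ by the definition \eqref{eq:set-lattice-operations}. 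The indicators on the right therefore equal one. The remaining inequality $g_{\xx_B}(\aa)\,g_{\yy_B}(\boldsymbol b)\leq g_{\boldsymbol u}(\aa\wedge\boldsymbol b)\,g_{\boldsymbol v}(\aa\vee\boldsymbol b)$ is precisely the $\mathrm{MTP}_2$ property of $(\xx_A,\xx_B)\mapsto g_{\xx_B}(\xx_A)$ on $\Re^A\times\Re^B$, applied at the points $(\aa,\xx_B)$ and $(\boldsymbol b,\yy_B)$.
\end{enumerate}
The $\phi_i$ are nonnegative and Borel: $g$ is measurable in $\xx_A$ as a product of a kernel evaluation and a density, and admissibility gives that $C\wedge D$ and $C\vee D$ are Borel. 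Integrability holds since each integral is at most one.

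The main obstacle is the legitimacy of the four-functions theorem for a general product of arbitrary one-dimensional probability laws, which may have atoms, rather than for Lebesgue measure. We would handle this by induction on $|A|$. The base case is a one-dimensional product measure on the totally ordered set $\Re$, where the standard proof works for any $\sigma$-finite measure: symmetrize over $\{(s,t)\}$ and use the four-number inequality $ab\le cd$ together with $a+b\le c+d$. For the inductive step, integrate out one coordinate. Fubini, together with the fact that the pointwise four-function hypothesis is inherited by the partial integrals (by the base case applied on the fibre), gives the hypothesis for the lower-dimensional marginal functions. If citing is allowed, we simply invoke the Karlin--Rinott version, which is stated for product measures on $\Re^A$, exactly as Lemma~\ref{lem:one-dimensional-lattice} already invoked the integral composition inequality. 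Finally, we note that the choice of versions of $f_A$ and $K_{B\mid A}$ is irrelevant, since the integrals do not depend on $\nu_A$-null modifications.
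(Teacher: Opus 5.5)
Your proposal is correct and follows essentially the same route as the paper. Disintegration identifies $g_{\xx_B}$ as the $\nu_A$-density of $\mu^{A}_{\xx_B}$, and the integral four-functions inequality for the product measure $\nu_A$ (which the paper simply cites from M\"uller--Stoyan and Colangelo et al.) turns the pointwise $\mathrm{MTP}_2$ inequality into the set inequality needed for Proposition~\ref{prop:cmmtp2-finite-measures}; your indicator-weighted $\phi_i$ and the induction sketch for atomic marginals just make explicit what the paper leaves to that citation.
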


\begin{proof}
	Let $\xx_B,\yy_B\in\Re^B$ and let
	$C,D\in\mathcal B_A$ be such that
	$C\wedge D,C\vee D\in\mathcal B_A$. By disintegration, $\mu_{\xx_B}^{A}$ and $\mu_{\yy_B}^{A}$ are absolutely continuous
	with respect to $\nu_A$, with densities
	$g_{\xx_B}$ and $g_{\yy_B}$, respectively \citep{pramanik2023path}. Hence,
	$\mu_{\xx_B}^{A}(C)=\int_C g_{\xx_B}\,d\nu_A$ and
	$\mu_{\yy_B}^{A}(D)=\int_D g_{\yy_B}\,d\nu_A$ \citep{pramanik2021optimization,pramanik2020motivation,pramanik2024motivation}. Since $(\xx_A,\xx_B)\mapsto g_{\xx_B}(\xx_A)$ is
	$\mathrm{MTP}_2$, for $\uu_A,\vv_A\in\Re^A$ we have
	\begin{equation}
		\begin{split}
			g_{\xx_B}(\uu_A)g_{\yy_B}(\vv_A)
			\leq
			g_{\xx_B\wedge\yy_B}(\uu_A\wedge\vv_A)
			g_{\xx_B\vee\yy_B}(\uu_A\vee\vv_A).
		\end{split}
		\label{eq:density-set-inequality}
	\end{equation}
As $\nu_A=\bigotimes_{i\in A}\mathbb P_{X_i}$ is a product measure, the set-integral form of the $\mathrm{MTP}_2$ lattice inequality applied to \eqref{eq:density-set-inequality} implies
	\begin{equation}
		\begin{split}
			&\left(\int_C g_{\xx_B}\,d\nu_A\right)
			\left(\int_D g_{\yy_B}\,d\nu_A\right)\leq
			\left(\int_{C\wedge D}
			g_{\xx_B\wedge\yy_B}\,d\nu_A\right)
			\left(\int_{C\vee D}
			g_{\xx_B\vee\yy_B}\,d\nu_A\right).
		\end{split}
		\label{eq:density-to-measure-integrals}
	\end{equation}
	This is the set-based integration argument underlying the density-free formulation of $\mathrm{MTP}_2$ (see \citet[Theorem~3.10.14]{MullerStoyan2002} and
	\citet{ColangeloEtAl2006}). Invoking
	$\mu_{\zz_B}^{A}(E)=\int_Eg_{\zz_B}\,d\nu_A$
	for $E\in\mathcal B_A$ and $\zz_B\in\Re^B$,
	\eqref{eq:density-to-measure-integrals} becomes
	$\mu_{\xx_B}^{A}(C)\mu_{\yy_B}^{A}(D)
	\leq
	\mu_{\xx_B\wedge\yy_B}^{A}(C\wedge D)
	\mu_{\xx_B\vee\yy_B}^{A}(C\vee D)$.
	Proposition~\ref{prop:cmmtp2-finite-measures} now yields $\mathrm{cmMTP}_2(\XX_B\mid\XX_A)$. This completes the proof.
\end{proof}

\noindent The converse requires recovering the pointwise lattice inequality for $g$ from the set inequalities defining $\mathrm{cmMTP}_2$. We provide a localization argument under regularity conditions that ensure
convergence of the corresponding normalized probabilities \citep{pramanik2020optimization,pramanik2021effects,pramanik2021consensus,pramanik2023scoring}. Related density-based approaches of $\mathrm{MTP}_2$ are discussed in
\citet{FallatEtAl2017}, \citet{LauritzenEtAl2021}, and
\citet{RobevaEtAl2021}.

\begin{theorem} \label{thm:density-equivalence}
	Let $A$ and $B$ be non-empty sets forming a partition of $[d]$. Suppose, for every $i\in A$, $\mathbb P_{X_i}$ admits a strictly
	positive continuous Lebesgue density $p_i$, and define $\nu_A:=\bigotimes_{i\in A}\mathbb P_{X_i}$ \citep{pramanik2022lock,pramanik2023path1}. Assume  $\mu_A\ll\nu_A$ and that versions of $f_A$ and $K_{B\mid A}$ can be
	chosen such that $g(\xx_A,\xx_B):=K_{B\mid A}(\xx_A,L_{\xx_B})f_A(\xx_A)$ is continuous on $\Re^A\times\Re^B$. Then $\mathrm{cmMTP}_2(\XX_B\mid\XX_A)$ holds if and only if $\mathrm{cMTP}_2(\XX_B\mid\XX_A)$ holds.
\end{theorem}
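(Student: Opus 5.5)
The direction $\mathrm{cMTP}_2\Rightarrow\mathrm{cmMTP}_2$ follows from Lemma~\ref{lem:cmtp2-to-cmmtp2}. By the discussion preceding that lemma, $\mathrm{cMTP}_2(\XX_B\mid\XX_A)$ is exactly the $\mathrm{MTP}_2$ property of $g$ for suitable versions of $f_A$ and $K_{B\mid A}$. It therefore suffices to prove the converse: assuming $\mathrm{cmMTP}_2(\XX_B\mid\XX_A)$, show that the continuous version $g$ satisfies
\[
g(\uu_A,\xx_B)\,g(\vv_A,\yy_B)\leq g(\uu_A\wedge\vv_A,\xx_B\wedge\yy_B)\,g(\uu_A\vee\vv_A,\xx_B\vee\yy_B)
\]
for all $\uu_A,\vv_A\in\Re^A$ and $\xx_B,\yy_B\in\Re^B$. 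The plan is to localize with small cubes, using Proposition~\ref{prop:cmmtp2-finite-measures} and the representation $\mu^A_{\zz_B}(E)=\int_E g(\cdot,\zz_B)\,d\nu_A$.

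Fix $\uu_A,\vv_A,\xx_B,\yy_B$ and $\varepsilon>0$, and take
\[
C_\varepsilon:=\prod_{i\in A}[u_i,u_i+\varepsilon],\qquad D_\varepsilon:=\prod_{i\in A}[v_i,v_i+\varepsilon].
\]
Because $\wedge$ and $\vee$ act coordinatewise and, in one dimension, $[a,a+\varepsilon]\wedge[b,b+\varepsilon]=[a\wedge b,(a\wedge b)+\varepsilon]$ (and similarly for $\vee$), we get
\[
C_\varepsilon\wedge D_\varepsilon=\prod_{i\in A}[u_i\wedge v_i,(u_i\wedge v_i)+\varepsilon],\qquad C_\varepsilon\vee D_\varepsilon=\prod_{i\in A}[u_i\vee v_i,(u_i\vee v_i)+\varepsilon].
\]
This is a routine one-dimensional check on the interval endpoints. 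Both sets are closed boxes, hence Borel and admissible. The key feature is that all four boxes have the same shape, translated to the four corner points $\uu_A,\vv_A,\uu_A\wedge\vv_A,\uu_A\vee\vv_A$.

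Applying \eqref{eq:cmmtp2-finite-measures} gives
\[
\int_{C_\varepsilon} g(\cdot,\xx_B)\,d\nu_A\int_{D_\varepsilon} g(\cdot,\yy_B)\,d\nu_A\leq\int_{C_\varepsilon\wedge D_\varepsilon} g(\cdot,\xx_B\wedge\yy_B)\,d\nu_A\int_{C_\varepsilon\vee D_\varepsilon} g(\cdot,\xx_B\vee\yy_B)\,d\nu_A.
\]
Since $\nu_A$ has Lebesgue density $\prod_{i\in A}p_i$, each integral equals $\int_{\text{box}} g\prod_i p_i\,d\lambda^A$. We divide both sides by $\varepsilon^{2|A|}$ and let $\varepsilon\downarrow0$. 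Continuity of $g$ and of the $p_i$ gives, by the Lebesgue differentiation theorem (elementary here because the integrand is continuous), the limit
\[
g(\uu_A,\xx_B)\,g(\vv_A,\yy_B)\,P(\uu_A)P(\vv_A)\leq g(\uu_A\wedge\vv_A,\xx_B\wedge\yy_B)\,g(\uu_A\vee\vv_A,\xx_B\vee\yy_B)\,P(\uu_A\wedge\vv_A)P(\uu_A\vee\vv_A),
\]
where $P:=\prod_{i\in A}p_i$.

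The main obstacle is removing the weights $P$. The observation that resolves it is that $P$ is a product of univariate functions, and for each coordinate $\{u_i\wedge v_i,u_i\vee v_i\}=\{u_i,v_i\}$. Hence
\[
P(\uu_A)P(\vv_A)=P(\uu_A\wedge\vv_A)P(\uu_A\vee\vv_A),
\]
and this common value is strictly positive because each $p_i>0$. Cancelling it yields the pointwise $\mathrm{MTP}_2$ inequality for $g$, that is, $\mathrm{cMTP}_2(\XX_B\mid\XX_A)$.

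The remaining care points are routine. First, the $\varepsilon$-scaling is identical on both sides, because all four boxes have volume $\varepsilon^{|A|}$. Second, continuity of $g$ is used jointly only through its $\xx_A$-sections at the four fixed $B$-arguments. Third, the version of $g$ used in the conclusion is the continuous one supplied by the hypothesis, so no almost-everywhere ambiguity arises.
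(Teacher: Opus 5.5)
Your proposal is correct and is essentially the paper's argument: you localize the $\mathrm{cmMTP}_2$ inequality on small cubes at the four lattice points, divide by the common volume factor, pass to the limit using continuity of $g$ and of the $p_i$, and cancel the lattice-neutral, strictly positive weight $\prod_{i\in A}p_i$. The only difference is cosmetic: the paper uses centred cubes $\prod_{i\in A}[x_i-r,x_i+r]$, whereas you use corner-anchored ones $\prod_{i\in A}[u_i,u_i+\varepsilon]$.
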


\begin{proof}
	By Lemma~\ref{lem:cmtp2-to-cmmtp2}, it remains to prove that $\mathrm{cmMTP}_2(\XX_B\mid\XX_A)$ implies
	$\mathrm{cMTP}_2(\XX_B\mid\XX_A)$. Suppose $\mathrm{cmMTP}_2(\XX_B\mid\XX_A)$ holds and fix
	$\xx_A,\yy_A\in\Re^A$ and $\xx_B,\yy_B\in\Re^B$. For $r>0$, define
	the rectangles
	$C_r:=\prod_{i\in A}[x_i-r,x_i+r]$ and
	$D_r:=\prod_{i\in A}[y_i-r,y_i+r]$ \citep{hertweck2023clinicopathological,pramanik2022stochastic,pramanik2023optimal,pramanik2024optimization}.
	Then $C_r,D_r\in\mathcal B_A$ and
	\begin{equation}
		\begin{split}
			C_r\wedge D_r
			&=\prod_{i\in A}
			[x_i\wedge y_i-r,x_i\wedge y_i+r],\ \
			C_r\vee D_r
			=\prod_{i\in A}
			[x_i\vee y_i-r,x_i\vee y_i+r].
		\end{split}
		\label{eq:local-rectangles}
	\end{equation}
	Thus, $C_r\wedge D_r$ and $C_r\vee D_r$ are admissible in Definition~\ref{def:conditional-measure-mtp2}. Let $p_A(\zz_A):=\prod_{i\in A}p_i(z_i)$ \citep{pramanik2024estimation,pramanik2025construction}. Since
	$\nu_A(d\zz_A)=p_A(\zz_A)d\zz_A$, continuity of $g$ and $p_A$ implies, for every $\zz_A\in\Re^A$ and $\zz_B\in\Re^B$,
	\begin{equation}
		\begin{split}
			\lim_{r\downarrow0}
			\frac{\mu_{\zz_B}^{A}
				\left(\prod_{i\in A}[z_i-r,z_i+r]\right)}
			{(2r)^{|A|}}
			&=
			\lim_{r\downarrow0}
			\frac{1}{(2r)^{|A|}}
			\int_{\prod_{i\in A}[z_i-r,z_i+r]}
			g_{\zz_B}(\uu_A)p_A(\uu_A)\,d\uu_A\\
			&=
			g_{\zz_B}(\zz_A)p_A(\zz_A).
		\end{split}
		\label{eq:local-density-limit}
	\end{equation}
	Applying Proposition~\ref{prop:cmmtp2-finite-measures} to
	$C_r,D_r,\xx_B,\yy_B$ and invoking
	\eqref{eq:local-rectangles}, we obtain
	\begin{equation}
		\begin{split}
			\mu_{\xx_B}^{A}(C_r)\mu_{\yy_B}^{A}(D_r)
			\leq
			\mu_{\xx_B\wedge\yy_B}^{A}(C_r\wedge D_r)
			\mu_{\xx_B\vee\yy_B}^{A}(C_r\vee D_r).
		\end{split}
		\label{eq:localized-cmmtp2}
	\end{equation}
	Divide \eqref{eq:localized-cmmtp2} by $(2r)^{2|A|}$ and let $r\downarrow0$. By \eqref{eq:local-density-limit},
	\begin{equation}
		\begin{split}
			&g_{\xx_B}(\xx_A)g_{\yy_B}(\yy_A)
			p_A(\xx_A)p_A(\yy_A)\leq
			g_{\xx_B\wedge\yy_B}(\xx_A\wedge\yy_A)
			g_{\xx_B\vee\yy_B}(\xx_A\vee\yy_A)
			p_A(\xx_A\wedge\yy_A)p_A(\xx_A\vee\yy_A).
		\end{split}
		\label{eq:localized-density-inequality}
	\end{equation}
	Since $p_A$ is a product density,
	$p_A(\xx_A)p_A(\yy_A)= p_A(\xx_A\wedge\yy_A)p_A(\xx_A\vee\yy_A)$. Moreover, $p_A>0$ by assumption. Cancelling this common factor in \eqref{eq:localized-density-inequality} yields
	$g_{\xx_B}(\xx_A)g_{\yy_B}(\yy_A)
	\leq
	g_{\xx_B\wedge\yy_B}(\xx_A\wedge\yy_A)
	g_{\xx_B\vee\yy_B}(\xx_A\vee\yy_A)$. Since $\xx_A,\yy_A,\xx_B,\yy_B$ were arbitrary, $g$ is
	$\mathrm{MTP}_2$. Hence,
	$\mathrm{cMTP}_2(\XX_B\mid\XX_A)$ holds.
\end{proof}

\noindent Theorem~\ref{thm:density-equivalence} extends immediately to the whole-vector construction \citep{kakkat2026angiotensin,pramanik2026bayesian}.

\begin{corollary}
	\label{cor:density-equivalence-vector}
	Suppose every marginal $\mathbb P_{X_i}$, $i\in[d]$, admits a strictly positive continuous Lebesgue density. Assume, for every
	non-trivial partition $(A,B)$ of $[d]$, that $\mu_A\ll\nu_A$ and that
	versions of $f_A$ and $K_{B\mid A}$ can be chosen such that $(\xx_A,\xx_B)\mapsto
	K_{B\mid A}(\xx_A,L_{\xx_B})f_A(\xx_A)$
	is continuous \citep{pramanik2025optimal}. Then $\mathrm{cmMTP}_2(\XX)$ holds if and only if $\mathrm{cMTP}_2(\XX)$ holds \citep{pramanik2024estimation}.
\end{corollary}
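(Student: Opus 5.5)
The plan is to reduce the corollary to Theorem~\ref{thm:density-equivalence} applied partition by partition. The first step is to confirm that the two whole-vector notions have the same form. By Definition~\ref{def:cmmtp2-vector}, $\mathrm{cmMTP}_2(\XX)$ means that $\mathrm{cmMTP}_2(\XX_B\mid\XX_A)$ holds for every non-trivial partition $(A,B)$ of $[d]$. I will read $\mathrm{cMTP}_2(\XX)$ in the same way, namely as $\mathrm{cMTP}_2(\XX_B\mid\XX_A)$ for every such partition, which is the whole-vector convention of \citet{FuchsWang2027}. With both sides written as conjunctions over the same finite index set of partitions, it suffices to show that the two conjuncts agree for each fixed $(A,B)$.

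Next, fix a non-trivial partition $(A,B)$ and check the hypotheses of Theorem~\ref{thm:density-equivalence}.

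\begin{itemize}
\item Each $\mathbb P_{X_i}$ with $i\in A\subseteq[d]$ has a strictly positive continuous Lebesgue density, because the corollary assumes this for every $i\in[d]$.
\item The conditions $\mu_A\ll\nu_A$ and the existence of versions of $f_A$ and $K_{B\mid A}$ making $(\xx_A,\xx_B)\mapsto K_{B\mid A}(\xx_A,L_{\xx_B})f_A(\xx_A)$ continuous are assumed for every non-trivial partition, hence for this one.
\end{itemize}

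Theorem~\ref{thm:density-equivalence} then gives $\mathrm{cmMTP}_2(\XX_B\mid\XX_A)\Longleftrightarrow\mathrm{cMTP}_2(\XX_B\mid\XX_A)$. Taking the conjunction over all partitions yields the corollary.

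The only delicate point is the version issue. For each partition, $\mathrm{cMTP}_2(\XX_B\mid\XX_A)$ should be understood as the existence of some versions of $f_A$ and $K_{B\mid A}$ for which $g$ is $\mathrm{MTP}_2$, as in Lemma~\ref{lem:cmtp2-to-cmmtp2}. The converse direction of Theorem~\ref{thm:density-equivalence} produces the $\mathrm{MTP}_2$ property for the continuous versions fixed in the hypothesis. Different partitions may use different versions, and nothing requires them to be compatible across partitions, since the conditions are checked separately. I therefore expect no real obstacle beyond stating this convention explicitly.
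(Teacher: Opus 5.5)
Your proposal is correct and follows the paper's proof, which applies Theorem~\ref{thm:density-equivalence} to each non-trivial partition $(A,B)$ of $[d]$ and then invokes Definition~\ref{def:cmmtp2-vector}. Your extra remarks make explicit what the paper leaves implicit: that $\mathrm{cMTP}_2(\XX)$ is read as the conjunction over all partitions, and that the chosen versions of $f_A$ and $K_{B\mid A}$ may differ from one partition to another.
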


\begin{proof}
	Apply Theorem~\ref{thm:density-equivalence} to every non-trivial
	partition $(A,B)$ of $[d]$ and use
	Definition~\ref{def:cmmtp2-vector}.
\end{proof}

\noindent Therefore, under the preceding regularity conditions, $\mathrm{cmMTP}_2$ agrees with the density-based $\mathrm{cMTP}_2$ property whenever the latter is available, while remaining meaningful for singular laws \citep{pramanik2023optimal,vikramdeo2024abstract,vikramdeo2024mitochondrial,vikramdeo2023profiling}. We next show that this extension also has the principal closure property motivating the measure theoretic formulation: stability under weak convergence.

\subsection{Weak convergence and approximation}
\label{subsec:weak-convergence}

We now study the behavior of $\mathrm{cmMTP}_2$ under weak convergence. This is a principal advantage of the measure theoretic construction: a sequence of absolutely continuous distributions may converge weakly to a singular law, so that a density-based conditional
property need not remain available at the limit. The corresponding weak-closure problem for unconditional measure-theoretic $\mathrm{MTP}_2$ was studied by \citet{ColangeloEtAl2006}. For a probability measure $\eta$ on $\Re^k$, let
$\mathcal C(\eta):=\{E\in\mathcal B(\Re^k):
\eta(\partial E)=0\}$ denote the class of $\eta$-continuity sets. If $\mu_n\Rightarrow\mu$, the Portmanteau theorem yields
$\mu_n(E)\to\mu(E)$ for every $E\in\mathcal C(\mu)$ (see \citet[Theorem~2.1]{Billingsley1999}). We use this
observation for the four sets appearing in the defining
$\mathrm{cmMTP}_2$ inequality.

\begin{lemma}[Determining class]
	\label{lem:cmmtp2-determining-class}
	Let $A$ and $B$ be non-empty sets forming a partition of $[d]$.
	Then $\mathrm{cmMTP}_2(\XX_B\mid\XX_A)$ holds if and only if
	\eqref{eq:cmmtp2} holds for all $\xx_B,\yy_B\in\Re^B$ and all
	strongly disjoint noncomparable closed rectangles $C,D\subset\Re^A$.
\end{lemma}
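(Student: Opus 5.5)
The ``only if'' direction is immediate: closed rectangles are Borel, and for rectangles $C\wedge D$ and $C\vee D$ are again closed rectangles, hence admissible. For the ``if'' direction, fix $\xx_B,\yy_B$ and write $\mu_1:=\mu^A_{\xx_B}$, $\mu_2:=\mu^A_{\yy_B}$, $\mu_3:=\mu^A_{\xx_B\wedge\yy_B}$ and $\mu_4:=\mu^A_{\xx_B\vee\yy_B}$. By Proposition~\ref{prop:cmmtp2-finite-measures} the goal is the four-measure inequality $\mu_1(C)\mu_2(D)\leq\mu_3(C\wedge D)\mu_4(C\vee D)$ for all admissible Borel $C,D$. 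The hypothesis supplies this inequality only for strongly disjoint noncomparable closed rectangles. The plan follows the strategy of \citet{ColangeloEtAl2006} for unconditional $\mathrm{mMTP}_2$, carried out with four measures instead of one, in the spirit of the Ahlswede--Daykin four-function theorem. The key extra input is the monotonicity $\mu_3\leq\mu_1,\mu_2\leq\mu_4$ setwise, which holds because $\xx_B\mapsto\mu^A_{\xx_B}$ is increasing.

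\emph{Step 1: all closed rectangles.} Let $C,D$ be arbitrary closed rectangles. I would reduce to the hypothesised class by splitting coordinatewise. On coordinates where the side intervals of $C$ and $D$ overlap, or are ordered the ``wrong'' way, the meet and join contain $C$ or $D$ in the relevant directions. The degenerate situations should follow from monotonicity alone. For example, if $C\leq D$ blockwise, then $C\wedge D\supseteq C$ and $C\vee D\supseteq D$, and $\mu_1(C)\mu_2(D)\leq\mu_3(C)\mu_4(D)$ fails as stated, so the argument must actually use the lattice structure rather than naive containment. I would instead partition $C$ and $D$ into finitely many subrectangles along the grid generated by their endpoints. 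The inequality for $C,D$ should then follow from inequalities between grid cells, each of which is either strongly disjoint and noncomparable, or comparable or identical.

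\emph{Step 2: finite unions of grid cells.} Fix a fine rectangular grid and let $\mathcal L$ be the finite distributive lattice of grid cells under coordinatewise order. Define $\alpha_j(\text{cell})=\mu_j(\text{cell})$ for $j=1,\ldots,4$. The hypothesis, together with the degenerate cases of Step~1, is meant to give $\alpha_1(s)\alpha_2(t)\leq\alpha_3(s\wedge t)\alpha_4(s\vee t)$ on $\mathcal L$. The Ahlswede--Daykin theorem then yields $\alpha_1(S)\alpha_2(T)\leq\alpha_3(S\wedge T)\alpha_4(S\vee T)$ for all $S,T\subseteq\mathcal L$, and this is the desired inequality for finite unions of cells. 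The identical-cell case $s=t$ is delicate, since it requires $\mu_1(s)\mu_2(s)\leq\mu_3(s)\mu_4(s)$, and this does not follow from setwise monotonicity alone. I would try to obtain it by refining $s$ into subcells lying on a chain and using that measures on a chain in $\Re$ satisfy the inequality (Lemma~\ref{lem:one-dimensional-lattice}). If that fails, I would obtain it as a limit of strongly disjoint noncomparable configurations, using closedness.

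\emph{Step 3: passage to Borel sets.} Given admissible Borel $C,D$, use inner regularity to choose compacts $K\subseteq C$ and $L\subseteq D$ whose measures are close under $\mu_1$ and $\mu_2$. Cover $K$ and $L$ by finite unions $S_n\downarrow K$ and $T_n\downarrow L$ of closed grid cells with mesh tending to zero. Then $S_n\wedge T_n\downarrow K\wedge L\subseteq C\wedge D$ by compactness and continuity of $T_\wedge$, and similarly for joins. Continuity from above and Step~2 then give the inequality for $C,D$.

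I expect Steps 1--2, namely the correct handling of non-disjoint or comparable cells at the discrete level, to be the main obstacle. The limit passage in Step~3 should be routine.
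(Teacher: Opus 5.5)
Your architecture parallels the paper's: reduce to grid cells, apply a four-function inequality on the finite cell lattice, then approximate. The paper uses instead the algebra generated by rectangles, ``the finite lattice inequality'', and a monotone-class step. The step you flag as the obstacle is, however, a genuine gap, and neither fallback closes it.

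The Ahlswede--Daykin theorem on the cell lattice needs $\alpha_1(s)\alpha_2(t)\le\alpha_3(s\wedge t)\alpha_4(s\vee t)$ for \emph{all} pairs of cells, including $t=s$ and comparable $s\le t$. For a single measure, as in \citet{ColangeloEtAl2006}, these cases are identities, which is why they can be dropped there. For your four measures they are genuine constraints:
\begin{itemize}
\item $t=s$ says that $\zz_B\mapsto\mu^A_{\zz_B}(s)$ is $\mathrm{MTP}_2$;
\item $s\le t$ says $\mu^A_{\xx_B}(s)\mu^A_{\yy_B}(t)\le\mu^A_{\xx_B\wedge\yy_B}(s)\mu^A_{\xx_B\vee\yy_B}(t)$.
\end{itemize}
Your fallbacks do not reach these cases:
\begin{itemize}
\item Setwise monotonicity gives $\mu_2\le\mu_4$ but also $\mu_3\le\mu_1$, so it cannot yield either case.
\item Lemma~\ref{lem:one-dimensional-lattice} concerns a single measure and says nothing about the dependence on $\xx_B$.
\item Two disjoint rectangles cannot both exhaust the same cell, so no limit of hypothesised configurations produces $\mu_1(s)\mu_2(s)$ on the left.
\end{itemize}

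In fact no argument can close the gap, because the stated class is not determining. Take $A=\{1\}$, $B=\{2,3\}$, $X_1\equiv0$, and $(X_2,X_3)=(W,-W)$ with $W$ uniform on $\{0,1\}$. Two disjoint rectangles cannot both contain $0$, so the left side of \eqref{eq:cmmtp2} vanishes and the hypothesis holds for all $\xx_B,\yy_B$. Yet $C=D=[-1,1]$ is admissible with $C\wedge D=C\vee D=C$. For this pair, \eqref{eq:cmmtp2} becomes the $\mathrm{MTP}_2$ inequality for $F_{\XX_B}$, which fails at $\xx_B=(0,0)$, $\yy_B=(1,-1)$ since $\tfrac12\cdot\tfrac12>0\cdot1$.

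If ``noncomparable'' is read in the order sense, the case $|A|=1$ is even worse. Disjoint intervals are then always comparable, so the hypothesis is vacuous. Meanwhile $X_2=-X_1$ with $X_1$ uniform on $\{0,1\}$ violates the comparable-cell inequality. The paper's proof passes over exactly this point with ``together with the finite lattice inequality''.

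What your plan does prove is a corrected lemma. In it, the class contains all pairs of closed rectangles whose side intervals are coordinatewise equal or disjoint, with identical and comparable pairs included. Then:
\begin{itemize}
\item every pair of grid cells is covered (use half-open cells for additivity, obtained from closed ones by increasing limits);
\item Ahlswede--Daykin handles finite unions of cells;
\item your compact-approximation Step~3 passes to admissible Borel sets.
\end{itemize}
That last step is in fact sounder than the paper's monotone-class argument. The latter breaks under decreasing limits, because $\bigcap_n(C_n\wedge D)$ may strictly contain $(\bigcap_n C_n)\wedge D$.
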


\begin{proof}
	Necessity is immediate. For the converse, let $\mathcal I_A$ be the
	class of half-open rectangles in $\Re^A$ and set
	$\mathcal A_A:=\operatorname{alg}(\mathcal I_A)$. For
	$C,D\in\mathcal A_A$, choose a common finite rectangular partition
	$(R_j)_{j\in J}$ such that
	$C=\dot\bigcup_{j\in J_C}R_j$ and
	$D=\dot\bigcup_{j\in J_D}R_j$. The assumed inequality on strongly disjoint noncomparable rectangles, together with the finite lattice inequality, yields
	\eqref{eq:cmmtp2} for $C,D\in\mathcal A_A$. Fix $\xx_B,\yy_B\in\Re^B$ and define
	$\alpha:=\mu_{\xx_B}^{A}$,
	$\beta:=\mu_{\yy_B}^{A}$,
	$\gamma:=\mu_{\xx_B\wedge\yy_B}^{A}$, and
	$\delta:=\mu_{\xx_B\vee\yy_B}^{A}$. For $D\in\mathcal A_A$, let
	$\mathcal G_D:=\{C\in\mathcal B_A:
	\alpha(C)\beta(D)\leq
	\gamma(C\wedge D)\delta(C\vee D)\}$.
	Then $\mathcal A_A\subseteq\mathcal G_D$, and continuity under monotone limits gives that $\mathcal G_D$ is a monotone class.
	Hence $\mathcal G_D=\mathcal B_A$. Similarly, for
	$C\in\mathcal B_A$, define
	$\mathcal H_C:=\{D\in\mathcal B_A:
	\alpha(C)\beta(D)\leq
	\gamma(C\wedge D)\delta(C\vee D)\}$.
	Again $\mathcal A_A\subseteq\mathcal H_C$ and
	$\mathcal H_C$ is a monotone class, so
	$\mathcal H_C=\mathcal B_A$. Proposition~\ref{prop:cmmtp2-finite-measures}
	now gives $\mathrm{cmMTP}_2(\XX_B\mid\XX_A)$.
\end{proof}

\begin{theorem} \label{thm:cmmtp2-weak-closure}
	Let $A$ and $B$ be non-empty sets forming a partition of $[d]$, and
	let $\mu_n,\mu$ be probability measures on $\Re^d$ such that $\mu_n\Rightarrow\mu$. Suppose
	$\mathrm{cmMTP}_2((\XX_n)_B\mid(\XX_n)_A)$ holds for every $n$, where $\XX_n\sim\mu_n$. Then
	$\mathrm{cmMTP}_2(\XX_B\mid\XX_A)$ holds for $\XX\sim\mu$.
\end{theorem}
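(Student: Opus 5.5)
The plan is to verify the defining inequality \eqref{eq:cmmtp2} for $\mu$ only on a determining class, using Lemma~\ref{lem:cmmtp2-determining-class}, and to pass to the limit along sets that are $\mu$-continuity sets. Concretely, I would fix $\xx_B,\yy_B\in\Re^B$ and closed rectangles $C,D\subset\Re^A$. For closed rectangles, $C\wedge D$ and $C\vee D$ are again closed rectangles, so the four product sets
\[
C\times L_{\xx_B},\quad D\times L_{\yy_B},\quad (C\wedge D)\times L_{\xx_B\wedge\yy_B},\quad (C\vee D)\times L_{\xx_B\vee\yy_B}
\]
are closed rectangles in $\Re^d$, possibly unbounded below in the $B$-coordinates. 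If all four are $\mu$-continuity sets, then the Portmanteau theorem applied to \eqref{eq:cmmtp2-joint-law} for $\mu_n$ gives the inequality for $\mu$ by letting $n\to\infty$ in both products.

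The main obstacle is that arbitrary rectangles need not be $\mu$-continuity sets. This matters especially for singular limits such as comonotone laws, where mass may sit on hyperplanes $\{x_i=c\}$. To handle this, let $N_i:=\{t:\mu(X_i=t)>0\}$ for $i\in[d]$. Each $N_i$ is countable. The boundary of a rectangle with all edge endpoints outside $N_i$ lies in a finite union of hyperplanes $\{x_i=t\}$ with $t\notin N_i$, so it is $\mu$-null. Thus the inequality holds for $\mu$ whenever all relevant coordinates, namely the endpoints of $C$ and $D$ and the components of $\xx_B$ and $\yy_B$, avoid the $N_i$. Meets and joins take endpoints from the same finite set of values, so their endpoints also avoid the $N_i$.

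I would then remove the restriction by approximation. Given arbitrary closed rectangles $C=\prod[a_i,b_i]$ and $D=\prod[c_i,e_i]$ and arbitrary $\xx_B,\yy_B$, I would choose sequences of values outside the countable exceptional sets, with upper endpoints and $B$-levels decreasing to the targets and lower endpoints increasing to the targets. Both $\min$ and $\max$ of such sequences converge monotonically in the same way. Hence the four approximating closed sets decrease to the target closed sets, and continuity from above of $\mu$ transfers the inequality. Orthants $L_{\xx_B}$ are intersections of decreasing orthants, so they are handled in the same way.

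Finally, I would check the hypothesis of Lemma~\ref{lem:cmmtp2-determining-class}. The strongly disjoint noncomparable rectangles required there are closed rectangles, so the established inequality applies to them, and the lemma yields $\mathrm{cmMTP}_2(\XX_B\mid\XX_A)$ for $\mu$. If the lemma's class instead required half-open rectangles, I would repeat the same monotone approximation, using increasing unions and continuity from below. The one point needing care is bookkeeping: the approximating meet and join must converge to the meet and join of the limits, which holds because $\wedge$ and $\vee$ are monotone and continuous coordinatewise.
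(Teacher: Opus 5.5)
Your proposal is correct and follows the paper's proof essentially step for step. You reduce to rectangles via Lemma~\ref{lem:cmmtp2-determining-class}, then approximate $C,D,\xx_B,\yy_B$ monotonically by rectangles and levels whose boundary hyperplanes avoid the countably many atoms of the marginals, so the Portmanteau theorem applies for each fixed approximation. You then pass to the limit by continuity from above, using that the meets and joins of the approximants decrease to $C\wedge D$ and $C\vee D$.

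The only difference is cosmetic. The paper uses open rectangles $C_m\downarrow C$ kept inside the strongly disjoint noncomparable class, whereas you use closed atom-avoiding rectangles and prove the inequality for all closed rectangles, which spares you from checking that the approximants stay in that class.
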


\begin{proof}
	By Lemma~\ref{lem:cmmtp2-determining-class}, it is enough to verify \eqref{eq:cmmtp2} for strongly disjoint noncomparable closed rectangles $C,D\subset\Re^A$. Fix such $C,D$ and
	$\xx_B,\yy_B\in\Re^B$. Choose decreasing sequences of strongly disjoint noncomparable open
	rectangles $C_m\downarrow C$ and $D_m\downarrow D$, and $\xx_B^{(m)}\downarrow\xx_B$,
	$\yy_B^{(m)}\downarrow\yy_B$, such that
	\begin{equation}
		\begin{aligned}
			E_{1m}&:=C_m\times L_{\xx_B^{(m)}},&
			E_{2m}&:=D_m\times L_{\yy_B^{(m)}},\\
			E_{3m}&:=(C_m\wedge D_m)\times
			L_{\xx_B^{(m)}\wedge\yy_B^{(m)}},&
			E_{4m}&:=(C_m\vee D_m)\times
			L_{\xx_B^{(m)}\vee\yy_B^{(m)}}
		\end{aligned}
		\label{eq:weak-continuity-approximation}
	\end{equation}
	are $\mu$-continuity sets. Such sequences exist since the boundaries of rectangles are contained in finite unions of coordinate hyperplanes, and each marginal distribution has at most countably
	many atoms. For every $n,m\geq1$,
	$\mathrm{cmMTP}_2((\XX_n)_B\mid(\XX_n)_A)$ gives
	\begin{equation}
		\mu_n(E_{1m})\mu_n(E_{2m})
		\leq
		\mu_n(E_{3m})\mu_n(E_{4m}).
		\label{eq:weak-closure-nm}
	\end{equation}
	Fixing $m$ and letting $n\to\infty$, the Portmanteau theorem yields
	$\mu_n(E_{jm})\to\mu(E_{jm})$, $j\in[4]$, and hence,
	$\mu(E_{1m})\mu(E_{2m})
	\leq
	\mu(E_{3m})\mu(E_{4m})$. Finally, $C_m\downarrow C$, $D_m\downarrow D$ imply
	$C_m\wedge D_m\downarrow C\wedge D$ and
	$C_m\vee D_m\downarrow C\vee D$. Letting $m\to\infty$ and using
	continuity from above gives
	\begin{equation}
		\begin{split}
			&\mu(C\times L_{\xx_B})\mu(D\times L_{\yy_B})\leq
			\mu((C\wedge D)\times L_{\xx_B\wedge\yy_B})
			\mu((C\vee D)\times L_{\xx_B\vee\yy_B}).
		\end{split}
		\label{eq:weak-closure-limit}
	\end{equation}
	Lemma~\ref{lem:cmmtp2-determining-class} now yields
	$\mathrm{cmMTP}_2(\XX_B\mid\XX_A)$.
\end{proof}

\begin{remark}
	\label{rem:weak-closure}
	Theorem~\ref{thm:cmmtp2-weak-closure} requires neither $\mu_n\ll\lambda_d$ nor $\mu\ll\lambda_d$. Thus it may happen that
	$\mu_n\ll\lambda_d$ for every $n$, while
	$\mu_n\Rightarrow\mu$ and $\mu\perp\lambda_d$. More precisely, $\mu_n\Rightarrow\mu$ and
	$\mathrm{cmMTP}_2((\XX_n)_B\mid(\XX_n)_A)$ for every $n$ imply $\mathrm{cmMTP}_2(\XX_B\mid\XX_A)$ even when $\mu$ is singular. Moreover, under the hypotheses of Lemma~\ref{lem:cmtp2-to-cmmtp2},
	$\mathrm{cMTP}_2((\XX_n)_B\mid(\XX_n)_A)$ implies
	$\mathrm{cmMTP}_2((\XX_n)_B\mid(\XX_n)_A)$. Hence, the weak limit retains the measure theoretic conditional property even when a density-based approach is unavailable at $\mu$.
\end{remark}

\begin{example}
	\label{ex:weak-gaussian-limit}
	Let $Z,E_1,E_2$ be independent standard normal random variables and, for $\varepsilon>0$, define
	$X_{\varepsilon}:=Z+\varepsilon E_1$ and
	$Y_{\varepsilon}:=Z+\varepsilon E_2$. Then
	$(X_{\varepsilon},Y_{\varepsilon})\sim
	\mathcal N(\boldsymbol{0},\Sigma_{\varepsilon})$, where
	\begin{equation}
		\Sigma_{\varepsilon}
		=
		\begin{pmatrix}
			1+\varepsilon^2 & 1\\
			1 & 1+\varepsilon^2
		\end{pmatrix},
		\qquad
		\Sigma_{\varepsilon}^{-1}
		=
		\frac{1}{\varepsilon^2(2+\varepsilon^2)}
		\begin{pmatrix}
			1+\varepsilon^2 & -1\\
			-1 & 1+\varepsilon^2
		\end{pmatrix}.
		\label{eq:gaussian-weak-example}
	\end{equation}
	Hence, $\Sigma_{\varepsilon}^{-1}$ is an $M$-matrix and $(X_{\varepsilon},Y_{\varepsilon})$ is $\mathrm{dMTP}_2$ for every
	$\varepsilon>0$ \citep{FallatEtAl2017}. In particular, Theorem~\ref{thm:basic-implications}(i) yields $\mathrm{cmMTP}_2(Y_{\varepsilon}\mid X_{\varepsilon})$. Consider $\varepsilon\in\{1,1/2,1/5,1/10\}$. The
	correlations are
	\begin{center}
		\begin{tabular}{c|cccc}
			$\varepsilon$
			& $1$ & $1/2$ & $1/5$ & $1/10$\\ \hline
			$\operatorname{Corr}(X_{\varepsilon},Y_{\varepsilon})$
			& $0.5000$ & $0.8000$ & $0.9615$ & $0.9901$
		\end{tabular}
	\end{center}
	and therefore converge to $1$ as $\varepsilon\downarrow0$. The corresponding lower-orthant probabilities provide a direct
	probability-level illustration of the convergence. Since
	$\rho_{\varepsilon}:=
	\operatorname{Corr}(X_{\varepsilon},Y_{\varepsilon})
	=(1+\varepsilon^2)^{-1}$, the bivariate Gaussian orthant formula gives
	$\mathbb P(X_{\varepsilon}\leq0,Y_{\varepsilon}\leq0)
	=1/4+(2\pi)^{-1}\arcsin(\rho_{\varepsilon})$. Hence,
	
	\begin{center}
		\begin{tabular}{c|cccc}
			$\varepsilon$
			& $1$ & $1/2$ & $1/5$ & $1/10$\\ \hline
			$\mathbb P(X_{\varepsilon}\leq0,Y_{\varepsilon}\leq0)$
			& $0.3333$ & $0.3976$ & $0.4557$ & $0.4776$
		\end{tabular}
	\end{center}
	
	\noindent As $\varepsilon\downarrow0$, $\rho_{\varepsilon}\uparrow1$ and
	therefore,
	$\mathbb P(X_{\varepsilon}\leq0,Y_{\varepsilon}\leq0)\to1/2$.
	For the limiting vector $(Z,Z)$,
	$\mathbb P(Z\leq0,Z\leq0)=\mathbb P(Z\leq0)=1/2$.	Moreover,
	$(X_{\varepsilon},Y_{\varepsilon})\Rightarrow(Z,Z)$.
	The limiting distribution is concentrated on
	$\{(z,z):z\in\Re\}$, and hence is singular with respect to $\mathbb P_Z\otimes\mathbb P_Z$. Taking $A=\{1\}$ and $B=\{2\}$, Theorem~\ref{thm:cmmtp2-weak-closure} implies
	$\mathrm{cmMTP}_2(Z\mid Z)$ for the limiting random vector $(Z,Z)$. Therefore, every member of the approximating family admits a
	strictly positive Gaussian density, whereas the limit has no density with respect to the product of its marginals. Nevertheless, the conditional measure theoretic total positivity property is preserved.
\end{example}

\subsection{Monotone transformations}
\label{subsec:monotone-transformations}

We finally consider coordinatewise transformations
$\phi=(\phi_i)_{i\in[d]}$, where
$\phi_i:\Re\to\Re$ is increasing. Such maps satisfy
$\phi(\xx\wedge\yy)=\phi(\xx)\wedge\phi(\yy)$ and
$\phi(\xx\vee\yy)=\phi(\xx)\vee\phi(\yy)$, suggesting a natural closure property for $\mathrm{cmMTP}_2$. The analogous preservation
property for $\mathrm{MTP}_2$ is well known
\citep{FallatEtAl2017}; below we establish its measure theoretic conditional counterpart. For mappings $\phi_i:\Re\to\Re$, $i\in[d]$, write
$\phi_I(\xx_I):=(\phi_i(x_i))_{i\in I}$ for
$\emptyset\neq I\subseteq[d]$. The following elementary observation will be used repeatedly.

\begin{lemma}
	\label{lem:monotone-lattice-map}
	Suppose $\phi_i:\Re\to\Re$, $i\in I$, are strictly increasing Borel functions. Then, for $\xx_I,\yy_I\in\Re^I$,
	$\phi_I(\xx_I\wedge\yy_I)
	=\phi_I(\xx_I)\wedge\phi_I(\yy_I)$ and
	$\phi_I(\xx_I\vee\yy_I)
	=\phi_I(\xx_I)\vee\phi_I(\yy_I)$.
	Moreover, for admissible Borel sets
	$C,D\subseteq\phi_I(\Re^I)$,
	$\phi_I^{-1}(C\wedge D)
	=\phi_I^{-1}(C)\wedge\phi_I^{-1}(D)$ and
	$\phi_I^{-1}(C\vee D)
	=\phi_I^{-1}(C)\vee\phi_I^{-1}(D)$.
\end{lemma}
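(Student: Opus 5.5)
The first identity is coordinatewise, so I will reduce it to a one-dimensional statement. For a strictly increasing $\phi_i$ and reals $s,t$, suppose without loss of generality that $s\le t$. Then $s\wedge t=s$ and $\phi_i(s)\le\phi_i(t)$, so $\phi_i(s\wedge t)=\phi_i(s)=\phi_i(s)\wedge\phi_i(t)$. The maximum is handled the same way. Applying this in each coordinate $i\in I$ gives $\phi_I(\xx_I\wedge\yy_I)=\phi_I(\xx_I)\wedge\phi_I(\yy_I)$ and $\phi_I(\xx_I\vee\yy_I)=\phi_I(\xx_I)\vee\phi_I(\yy_I)$. This part uses only monotonicity, not strictness.

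For the preimage identities I will use two facts. First, strict monotonicity makes each $\phi_i$ injective, so $\phi_I$ is a bijection from $\Re^I$ onto $\phi_I(\Re^I)$. Second, $\phi_I(\Re^I)=\prod_{i\in I}\phi_i(\Re)$ is a product of subsets of $\Re$. Each such subset is a chain and hence closed under $\wedge$ and $\vee$. Therefore $\phi_I(\Re^I)$ is a sublattice of $\Re^I$, and for $C,D\subseteq\phi_I(\Re^I)$ the sets $C\wedge D$ and $C\vee D$ again lie in $\phi_I(\Re^I)$.

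Now I show $\phi_I^{-1}(C)\wedge\phi_I^{-1}(D)\subseteq\phi_I^{-1}(C\wedge D)$. Take $\uu\in\phi_I^{-1}(C)$ and $\vv\in\phi_I^{-1}(D)$. By the first part, $\phi_I(\uu\wedge\vv)=\phi_I(\uu)\wedge\phi_I(\vv)\in C\wedge D$, so $\uu\wedge\vv\in\phi_I^{-1}(C\wedge D)$.

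For the reverse inclusion, take $\ww$ with $\phi_I(\ww)\in C\wedge D$. Write $\phi_I(\ww)=\aa\wedge\boldsymbol{b}$ with $\aa\in C$ and $\boldsymbol{b}\in D$. Since $C,D\subseteq\phi_I(\Re^I)$, there are $\uu,\vv$ with $\phi_I(\uu)=\aa$ and $\phi_I(\vv)=\boldsymbol{b}$. Then $\phi_I(\uu\wedge\vv)=\aa\wedge\boldsymbol{b}=\phi_I(\ww)$. Injectivity gives $\ww=\uu\wedge\vv\in\phi_I^{-1}(C)\wedge\phi_I^{-1}(D)$. The join identity follows by the same argument with $\vee$ in place of $\wedge$.

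The main point needing care is this reverse inclusion. It is exactly where the two hypotheses enter: the requirement $C,D\subseteq\phi_I(\Re^I)$ is needed to lift $\aa$ and $\boldsymbol{b}$ to points of $\Re^I$, and strictness (injectivity) is needed to conclude $\ww=\uu\wedge\vv$. Without strictness, $\phi_I^{-1}$ of a point can be a nondegenerate interval and the identity can fail. Measurability is not an issue: admissibility makes $C\wedge D$ and $C\vee D$ Borel, and $\phi_I$ is Borel, so all the preimages involved are Borel sets.
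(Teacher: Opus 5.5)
Your proof is correct and takes essentially the paper's route: coordinatewise lattice identities, the forward inclusion obtained from them, and the reverse inclusion obtained by lifting $\aa\in C$, $\boldsymbol{b}\in D$ to preimages and using injectivity, while also making explicit where $C,D\subseteq\phi_I(\Re^I)$ is used, which the paper leaves implicit. One correction to your closing remark: the preimage identities do not fail for merely non-decreasing $\phi_i$, because you can choose the lifts coordinatewise (take $u_i:=w_i$ and $v_i\in\phi_i^{-1}(\{b_i\})\cap[w_i,\infty)$ wherever $a_i\le b_i$, this set being nonempty by monotonicity since $b_i\in\phi_i(\Re)$, and argue symmetrically wherever $b_i<a_i$), which still yields $\uu\wedge\vv=\ww$; so strictness is needed only for your particular injectivity step, not for the identity itself.
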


\begin{proof}
	For every $i\in I$, strict monotonicity gives
	$\phi_i(x_i\wedge y_i)
	=\phi_i(x_i)\wedge\phi_i(y_i)$ and
	$\phi_i(x_i\vee y_i)
	=\phi_i(x_i)\vee\phi_i(y_i)$.
	The vector identities follow coordinatewise. Applying them to
	$\xx_I\in\phi_I^{-1}(C)$ and
	$\yy_I\in\phi_I^{-1}(D)$, and using injectivity of $\phi_I$, gives the two inverse-image identities.
\end{proof}

\noindent Lemma~\ref{lem:monotone-lattice-map} allows the transformation property to be established separately on the conditioning and conditioned blocks.

\begin{lemma}
	\label{lem:lower-ray-extension}
	If $\mathrm{cmMTP}_2(\XX_B\mid\XX_A)$ holds, then its defining
	inequality remains valid when $L_{\xx_B}$ and $L_{\yy_B}$ are
	replaced by arbitrary coordinatewise lower rays, with intersection
	and union in the corresponding lattice terms.
\end{lemma}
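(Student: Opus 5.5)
The idea is to approximate arbitrary lower rays by closed orthants and pass to the limit with continuity of measure. By a coordinatewise lower ray in $\Re^B$ I mean a product $R=\prod_{i\in B}I_i$, where each $I_i$ is of the form $(-\infty,a_i]$ or $(-\infty,a_i)$ with $a_i\in[-\infty,\infty]$. The meet and join of two such rays are then taken coordinatewise: $R\wedge R'=\prod_i(I_i\cap I_i')$ and $R\vee R'=\prod_i(I_i\cup I_i')$. These coincide with the set-lattice operations \eqref{eq:set-lattice-operations} applied to rays. Given rays $R,R'$ and admissible $C,D\in\mathcal B_A$, the target is
\[
\mu(C\times R)\,\mu(D\times R')\leq\mu((C\wedge D)\times(R\wedge R'))\,\mu((C\vee D)\times(R\vee R')).
\]

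First I handle closed rays with all $a_i$ finite. These are exactly the sets $L_{\xx_B}$, so \eqref{eq:cmmtp2} applies directly. Next, for each coordinate I construct finite sequences $a_i^{(n)}$ and $b_i^{(n)}$ that approximate the endpoints of $I_i$ and $I_i'$.
\begin{itemize}
\item If the endpoint is $+\infty$, I take $a_i^{(n)}=n$; this is the same device used in the proof of Theorem~\ref{thm:cmmtp2-marginalization}(i).
\item If the interval is open, $(-\infty,a_i)$, I take $a_i^{(n)}=a_i-1/n\uparrow a_i$.
\item If the endpoint is $-\infty$, I take $a_i^{(n)}=-n$.
\end{itemize}
The key observation is that $\min$ and $\max$ commute with monotone limits coordinatewise, so $L_{\xx^{(n)}\wedge\yy^{(n)}}$ and $L_{\xx^{(n)}\vee\yy^{(n)}}$ converge monotonically to $R\wedge R'$ and $R\vee R'$. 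I will also write $\xx^{(n)}_B$ and $\yy^{(n)}_B$ for the vectors of approximating endpoints for $R$ and $R'$.

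To make the limits clean, I split the coordinates into increasing approximations (the open and $+\infty$ cases) and decreasing approximations (the $-\infty$ case). Mixed directions within one sequence are a nuisance, so I pass to the limit one coordinate at a time, which avoids any non-monotone sequence. For each coordinate, all four events in \eqref{eq:cmmtp2} change monotonically in the same direction. If the endpoints of $I_i$ and $I_i'$ are of different types, for example one closed and finite and the other open, then the meet and join are still approximated monotonically, because $\min(a,b-1/n)\uparrow\min(a,b)$ when $b\le a$ is hit and so on. I check that the limit set of each meet or join is exactly the ray obtained from the intersection or union of the two intervals. The cases are $\min$ and $\max$ of a closed and an open endpoint, where the correct open or closed type must come out. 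For instance, $(-\infty,a]\cup(-\infty,b)$ with $a=b$ gives $(-\infty,a]$, and $\max(a,a-1/n)=a$ for all $n$ yields exactly that.

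Continuity from below or above of the finite measure $\mu$ then carries the inequality to the limit. The products converge because each factor converges.

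I expect the main obstacle to be the bookkeeping of these closed/open and $\pm\infty$ combinations. The goal there is to ensure that the approximating orthant meets and joins converge monotonically to precisely the lattice meet and join of the rays. An empty ray, where some $a_i=-\infty$ or the interval is empty, gives a zero left-hand side and is trivial, so I will dispose of it first.
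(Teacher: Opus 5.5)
Your proposal is correct and follows the paper's argument: you approximate each lower ray from below by closed orthants $L_{\aa_B^{(m)}}$, apply \eqref{eq:cmmtp2}, and pass to the limit by continuity of measure, using that coordinatewise intersections and unions of increasing sequences of rays increase to the intersection and union of the limit rays (this one fact also disposes of all your open/closed bookkeeping). Once empty rays are set aside as trivial, every approximation can be chosen nondecreasing and the limit taken simultaneously, so your coordinate-by-coordinate limit and the decreasing approximation for $a_i=-\infty$ are unnecessary; your explicit treatment of the empty ray does, however, cover a case the paper's proof glosses over, since $\emptyset$ cannot be approximated from below by closed rays.
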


\begin{proof}
	For lower rays $H_i\subseteq\Re$, choose
	$a_i^{(m)}$ such that
	$(-\infty,a_i^{(m)}]\uparrow H_i$. Hence, with
	$\aa_B^{(m)}:=(a_i^{(m)})_{i\in B}$,
	$L_{\aa_B^{(m)}}\uparrow\prod_{i\in B}H_i$.
	Apply Definition~\ref{def:conditional-measure-mtp2} and let
	$m\to\infty$. Continuity from below, together with
	$(H_i\cap G_i)$ and $(H_i\cup G_i)$ being lower rays, gives the
	assertion.
\end{proof}

\begin{proposition}
	\label{prop:monotone-transformations}
	Let $A$ and $B$ be non-empty sets forming a partition of $[d]$, and
	suppose $\mathrm{cmMTP}_2(\XX_B\mid\XX_A)$ holds.
	Let $\phi_i:\Re\to\Re$, $i\in[d]$, be strictly increasing Borel functions and define $Y_i:=\phi_i(X_i)$. Then
	\begin{enumerate}
		\item[(i)] $\mathrm{cmMTP}_2(\XX_B\mid\YY_A)$ holds.
		\item[(ii)] $\mathrm{cmMTP}_2(\YY_B\mid\YY_A)$ holds.
	\end{enumerate}
\end{proposition}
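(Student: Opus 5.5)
Both parts will be handled by pulling the inequality for the transformed vector back to an inequality for $\XX$ through inverse images, using Lemma~\ref{lem:monotone-lattice-map} on the conditioning block and Lemma~\ref{lem:lower-ray-extension} on the conditioned block. By Proposition~\ref{prop:cmmtp2-finite-measures} it suffices, in each part, to verify the finite-measure inequality \eqref{eq:cmmtp2-finite-measures} for the relevant family of measures.

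For (i), fix $\xx_B,\yy_B\in\Re^B$ and admissible $C,D\in\mathcal B_A$, meaning $C\wedge D,C\vee D\in\mathcal B_A$. The key identity is
\[
\{\YY_A\in E\}=\{\XX_A\in\phi_A^{-1}(E)\},
\]
valid for Borel $E\subseteq\Re^A$, since $\YY_A=\phi_A(\XX_A)$. Set $C':=\phi_A^{-1}(C)$ and $D':=\phi_A^{-1}(D)$; these are Borel because $\phi_A$ is Borel.

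The point needing care is that $C,D$ need not lie inside $\phi_A(\Re^A)$. We therefore first replace $C,D$ by $C\cap\phi_A(\Re^A)$ and $D\cap\phi_A(\Re^A)$. This does not change any probability involving $\YY_A$, because $\YY_A\in\phi_A(\Re^A)$ surely. Pointwise lattice compatibility gives $\phi_A(C'\wedge D')=\phi_A(C')\wedge\phi_A(D')\subseteq C\wedge D$. Hence $C'\wedge D'\subseteq\phi_A^{-1}(C\wedge D)$, and likewise $C'\vee D'\subseteq\phi_A^{-1}(C\vee D)$.

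Rather than needing equality of these sets, we only need monotone inclusions, as in the proof of Proposition~\ref{prop:common-factor}. The obstacle is that $C'\wedge D'$ may fail to be Borel, so \eqref{eq:cmmtp2} cannot be applied to $(C',D')$ directly. Two routes are available:
\begin{itemize}
\item[(a)] Invoke Lemma~\ref{lem:monotone-lattice-map}, which asserts equality $\phi_A^{-1}(C\wedge D)=C'\wedge D'$ for sets inside the range. This makes $C'\wedge D'$ and $C'\vee D'$ Borel, so $(C',D')$ is admissible and \eqref{eq:cmmtp2} applies.
\item[(b)] Alternatively, use Lemma~\ref{lem:cmmtp2-determining-class} and reduce to rectangles $C,D$. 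Their preimages are products of one-dimensional preimages of intervals under strictly increasing maps, which are intervals, so the preimages are rectangles and admissibility is automatic. The monotone-class argument of that lemma then extends the conclusion to all admissible $C,D$.
\end{itemize}
I expect this measurability and range issue to be the main obstacle, and I would rely on route (b) for safety. With admissibility in hand, \eqref{eq:cmmtp2} for $\XX$ gives
\[
\mathbb P(\YY_A\in C,\XX_B\le\xx_B)\,\mathbb P(\YY_A\in D,\XX_B\le\yy_B)\le \mathbb P(\XX_A\in C'\wedge D',\cdot)\,\mathbb P(\XX_A\in C'\vee D',\cdot),
\]
and by the inclusions above the right side is bounded by the corresponding $\YY_A$-probabilities.

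For (ii), apply (i) first, so that $\mathrm{cmMTP}_2(\XX_B\mid\YY_A)$ holds. Then transform the conditioned block. For $\zz_B\in\Re^B$,
\[
\{\phi_B(\XX_B)\le\zz_B\}=\prod_{i\in B}\{X_i\in H_i(z_i)\},\qquad H_i(z):=\{t:\phi_i(t)\le z\}.
\]
Each $H_i(z)$ is a lower ray because $\phi_i$ is increasing. Moreover $H_i(z\wedge w)=H_i(z)\cap H_i(w)$ and $H_i(z\vee w)=H_i(z)\cup H_i(w)$, the latter because lower rays are nested. So the four events in the defining inequality for $\YY_B$ are exactly the lower-ray events with their lattice combinations. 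Lemma~\ref{lem:lower-ray-extension}, applied to $(\XX_B\mid\YY_A)$, yields the inequality, and Proposition~\ref{prop:cmmtp2-finite-measures} concludes.
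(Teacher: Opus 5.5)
Your part (ii) is the paper's argument: the lower rays $H_i(t)=\{s:\phi_i(s)\le t\}$, with Lemma~\ref{lem:lower-ray-extension} applied to the pair $(\YY_A,\XX_B)$ supplied by part (i). Your route (a) for part (i) is exactly the paper's proof. It intersects $C,D$ with $\phi_A(\Re^A)$ and uses the equality in Lemma~\ref{lem:monotone-lattice-map} to make $(C',D')$ admissible for every admissible $(C,D)$ at once.

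Route (b), the one you say you would rely on, is a genuinely different argument. Lemma~\ref{lem:cmmtp2-determining-class}, applied to the vector $(\YY_A,\XX_B)$, reduces the claim to closed rectangles. Their preimages under $\phi_A$ are products of intervals, and coordinatewise meets and joins of such products are again products of intervals, so admissibility of $(C',D')$ comes for free. The one-sided inclusions $C'\wedge D'\subseteq\phi_A^{-1}(C\wedge D)$ and $C'\vee D'\subseteq\phi_A^{-1}(C\vee D)$ then finish the argument.

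This buys two things. First, you never restrict to the range. The paper's route needs the restricted pair $(C\cap\phi_A(\Re^A),D\cap\phi_A(\Re^A))$ to be admissible before Lemma~\ref{lem:monotone-lattice-map} applies, and it passes over this point. That is not automatic: the meet of the restricted sets can be strictly smaller than $(C\wedge D)\cap\phi_A(\Re^A)$, so its Borel measurability does not simply follow from that of $C\wedge D$. Second, you use only $\phi_i(s\wedge t)=\phi_i(s)\wedge\phi_i(t)$ and convexity of preimages of intervals, never injectivity. Your argument therefore works verbatim for non-decreasing $\phi_i$ and yields Theorem~\ref{thm:monotone-transformations} directly, without the perturbation $\phi_i(x)+x/n$ and the appeal to Theorem~\ref{thm:cmmtp2-weak-closure}.

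The cost is that route (b) is only as solid as the monotone-class step of Lemma~\ref{lem:cmmtp2-determining-class}. That step is delicate for decreasing limits, since $\bigcap_n(C_n\wedge D)$ may strictly contain $C\wedge D$. Route (a), by contrast, handles arbitrary admissible pairs in the strictly increasing case with no approximation.
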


\begin{proof}
	We first prove (i). Write
	$\phi_A(\xx_A):=(\phi_i(x_i))_{i\in A}$ and let
	$C,D\in\mathcal B_A$ be admissible for $\YY_A$. Since
	$\YY_A\in\phi_A(\Re^A)$ almost surely, we may replace $C,D$ by
	$C\cap\phi_A(\Re^A)$ and $D\cap\phi_A(\Re^A)$, respectively.
	Set
	$C':=\phi_A^{-1}(C)$ and $D':=\phi_A^{-1}(D)$.
	By Lemma~\ref{lem:monotone-lattice-map},
	\begin{equation}
		\begin{split}
			C'\wedge D'&=\phi_A^{-1}(C\wedge D),\ \
			C'\vee D'=\phi_A^{-1}(C\vee D).
		\end{split}
		\label{eq:inverse-lattice-transform}
	\end{equation}
	Fix $\xx_B,\yy_B\in\Re^B$. Applying
	$\mathrm{cmMTP}_2(\XX_B\mid\XX_A)$ to
	$C',D',\xx_B,\yy_B$ and using
	$\{\YY_A\in E\}=\{\XX_A\in\phi_A^{-1}(E)\}$ gives
	\begin{equation}
		\begin{split}
			&\mathbb P(\YY_A\in C,\XX_B\in L_{\xx_B})
			\mathbb P(\YY_A\in D,\XX_B\in L_{\yy_B})\leq
			\mathbb P(\YY_A\in C\wedge D,
			\XX_B\in L_{\xx_B\wedge\yy_B})
			\mathbb P(\YY_A\in C\vee D,
			\XX_B\in L_{\xx_B\vee\yy_B}).
		\end{split}
		\label{eq:transform-conditioning-block}
	\end{equation}
	Therefore, $\mathrm{cmMTP}_2(\XX_B\mid\YY_A)$ holds.
	We now prove (ii). For $t\in\Re$ and $i\in B$, set
	$H_i(t):=\{s\in\Re:\phi_i(s)\leq t\}$. Since $\phi_i$ is strictly
	increasing, $H_i(t)$ is a lower ray and, for $s,t\in\Re$,
	$H_i(s\wedge t)=H_i(s)\cap H_i(t)$ and
	$H_i(s\vee t)=H_i(s)\cup H_i(t)$. Hence, for
	$\xx_B\in\Re^B$,
	\begin{equation}
		\{\YY_B\leq\xx_B\}
		=
		\{\XX_B\in H_B(\xx_B)\},
		\qquad
		H_B(\xx_B):=\prod_{i\in B}H_i(x_i).
		\label{eq:lower-ray-preimage}
	\end{equation}
	For admissible $C,D\in\mathcal B_A$, apply
	Lemma~\ref{lem:lower-ray-extension} to
	$H_B(\xx_B)$ and $H_B(\yy_B)$. Since
	$H_B(\xx_B\wedge\yy_B)
	=H_B(\xx_B)\cap H_B(\yy_B)$ and
	$H_B(\xx_B\vee\yy_B)
	=H_B(\xx_B)\cup H_B(\yy_B)$ coordinatewise, part (i) yields the
	defining inequality for
	$\mathrm{cmMTP}_2(\YY_B\mid\YY_A)$.
\end{proof}

\noindent Strict monotonicity in Proposition~\ref{prop:monotone-transformations}
can be removed by combining a strictly increasing perturbation with the weak-closure property of Theorem~\ref{thm:cmmtp2-weak-closure}.

\begin{theorem}
	\label{thm:monotone-transformations}
	Let $A$ and $B$ be non-empty sets forming a partition of $[d]$ and
	suppose $\mathrm{cmMTP}_2(\XX_B\mid\XX_A)$ holds. Let
	$\phi_i:\Re\to\Re$, $i\in[d]$, be non-decreasing Borel functions and
	define $Y_i:=\phi_i(X_i)$. Then
	$\mathrm{cmMTP}_2(\YY_B\mid\YY_A)$ holds.
\end{theorem}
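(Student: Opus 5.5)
The plan follows the remark preceding the statement. First approximate each non-decreasing $\phi_i$ by strictly increasing Borel maps, then apply Proposition~\ref{prop:monotone-transformations} to each approximation, and finally pass to the limit with Theorem~\ref{thm:cmmtp2-weak-closure}.

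For $\varepsilon>0$ set
\begin{equation*}
\phi_i^{\varepsilon}(s):=\phi_i(s)+\varepsilon\arctan(s),\qquad i\in[d].
\end{equation*}
Each $\phi_i^{\varepsilon}$ is Borel and strictly increasing, being the sum of a non-decreasing and a strictly increasing function. Moreover $|\phi_i^{\varepsilon}(s)-\phi_i(s)|\leq\varepsilon\pi/2$ uniformly in $s$.

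Put $Y_i^{\varepsilon}:=\phi_i^{\varepsilon}(X_i)$ and $\YY^{\varepsilon}:=(Y_1^\varepsilon,\ldots,Y_d^\varepsilon)$. Proposition~\ref{prop:monotone-transformations}(ii), applied to the strictly increasing family $(\phi_i^{\varepsilon})_{i\in[d]}$, gives $\mathrm{cmMTP}_2(\YY^{\varepsilon}_B\mid\YY^{\varepsilon}_A)$ for every $\varepsilon>0$.

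Next, along $\varepsilon=1/n$ we have $\|\YY^{1/n}-\YY\|_\infty\leq\pi/(2n)$ surely. Hence $\YY^{1/n}\to\YY$ almost surely, and so $\mathbb P_{\YY^{1/n}}\Rightarrow\mathbb P_{\YY}$ on $\Re^d$. Theorem~\ref{thm:cmmtp2-weak-closure}, applied with $\mu_n:=\mathbb P_{\YY^{1/n}}$ and $\mu:=\mathbb P_{\YY}$, then yields $\mathrm{cmMTP}_2(\YY_B\mid\YY_A)$.

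Nothing here needs densities. This matters because the limit $\mathbb P_{\YY}$ may have atoms, for example when some $\phi_i$ is constant on an interval, even though $\mathbb P_{\XX}$ is diffuse. Weak closure is exactly what handles that.

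The main obstacle is the validity of the two results being invoked, rather than the approximation itself.

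\textbf{Proposition~\ref{prop:monotone-transformations}.} Part (i) relies on Lemma~\ref{lem:monotone-lattice-map}, which is stated only for admissible $C,D\subseteq\phi_A(\Re^A)$. After intersecting with $\phi_A(\Re^A)$, the sets $C\cap\phi_A(\Re^A)$ and their lattice combinations must stay admissible, and the probabilities must be unchanged. That is harmless on the support of $\YY_A$, but I would check it carefully. If it causes trouble, the fallback is to use Lemma~\ref{lem:cmmtp2-determining-class}: verify the inequality only for closed rectangles $C,D$. For such sets $\phi_A^{-1}(C)$ is a product of intervals, because preimages of intervals under a monotone map are intervals. The lattice identities of Lemma~\ref{lem:monotone-lattice-map} can then be checked coordinatewise by hand, avoiding the range issue altogether.

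\textbf{Theorem~\ref{thm:cmmtp2-weak-closure}.} Here I would accept the result as stated, since it is established earlier in the paper.
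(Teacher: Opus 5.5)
Your argument is the paper's proof. The paper uses the perturbation $\phi_i(x)+x/n$ in place of your $\phi_i(s)+\varepsilon\arctan(s)$, then applies Proposition~\ref{prop:monotone-transformations}(ii) to the strictly increasing approximants and concludes from $\YY^{(n)}\to\YY$ almost surely together with Theorem~\ref{thm:cmmtp2-weak-closure}. Your bounded perturbation gives uniform rather than pointwise convergence, which is a harmless convenience, and your rectangle-based fallback for Proposition~\ref{prop:monotone-transformations}(i) is not needed here, since the paper also takes that proposition as given.
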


\begin{proof}
	For $n\geq1$, define
	$\phi_i^{(n)}(x):=\phi_i(x)+x/n$ and
	$Y_i^{(n)}:=\phi_i^{(n)}(X_i)$. Each $\phi_i^{(n)}$ is strictly increasing, so Proposition~\ref{prop:monotone-transformations}(ii)
	gives $\mathrm{cmMTP}_2(\YY_B^{(n)}\mid\YY_A^{(n)})$.
	Moreover, $\YY^{(n)}\to\YY$ almost surely and hence,
	$\YY^{(n)}\Rightarrow\YY$. Theorem~\ref{thm:cmmtp2-weak-closure}
	therefore yields $\mathrm{cmMTP}_2(\YY_B\mid\YY_A)$.
\end{proof}

\begin{corollary}
	\label{cor:monotone-transformations-vector}
	Suppose $\mathrm{cmMTP}_2(\XX)$ holds and let
	$\phi_i:\Re\to\Re$, $i\in[d]$, be non-decreasing Borel functions. Then
	$\phi(\XX):=(\phi_1(X_1),\ldots,\phi_d(X_d))$
	is $\mathrm{cmMTP}_2$.
\end{corollary}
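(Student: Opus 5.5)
The plan is to reduce the whole-vector statement to the fixed-partition result, Theorem~\ref{thm:monotone-transformations}. The quantifier structure of Definition~\ref{def:cmmtp2-vector} allows this.

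First, I set $\YY:=\phi(\XX)$, so that $Y_i=\phi_i(X_i)$ for $i\in[d]$. Next, I fix an arbitrary pair of non-empty sets $A,B$ forming a partition of $[d]$. Since $\mathrm{cmMTP}_2(\XX)$ holds, Definition~\ref{def:cmmtp2-vector} yields $\mathrm{cmMTP}_2(\XX_B\mid\XX_A)$ for this particular partition. The maps $\phi_i$ are non-decreasing and Borel, and the same maps serve every partition. Theorem~\ref{thm:monotone-transformations} therefore applies directly and gives $\mathrm{cmMTP}_2(\YY_B\mid\YY_A)$.

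The partition $(A,B)$ was arbitrary. Hence $\mathrm{cmMTP}_2(\YY_B\mid\YY_A)$ holds for every non-trivial partition of $[d]$, which by Definition~\ref{def:cmmtp2-vector} is exactly $\mathrm{cmMTP}_2(\phi(\XX))$.

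No step is a real obstacle. The one point to check is that Theorem~\ref{thm:monotone-transformations} needs only the single-partition hypothesis and transforms both blocks with the same coordinatewise maps. This holds because $\phi$ acts coordinatewise and independently of the partition, so $(\phi(\XX))_A=\phi_A(\XX_A)$ and $(\phi(\XX))_B=\phi_B(\XX_B)$ for every choice of $(A,B)$. No further measurability or weak-convergence argument is needed beyond what that theorem already supplies.
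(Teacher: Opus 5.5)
Your proposal is correct and is essentially the paper's own proof: for each non-trivial partition $(A,B)$, Definition~\ref{def:cmmtp2-vector} supplies $\mathrm{cmMTP}_2(\XX_B\mid\XX_A)$, Theorem~\ref{thm:monotone-transformations} gives $\mathrm{cmMTP}_2(\phi_B(\XX_B)\mid\phi_A(\XX_A))$, and letting the partition range over all choices gives $\mathrm{cmMTP}_2(\phi(\XX))$. Your check that $(\phi(\XX))_A=\phi_A(\XX_A)$ and $(\phi(\XX))_B=\phi_B(\XX_B)$ for every partition is the only bookkeeping required, and the paper leaves it implicit.
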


\begin{proof}
	For every non-trivial partition $(A,B)$ of $[d]$,
	Definition~\ref{def:cmmtp2-vector} and
	Theorem~\ref{thm:monotone-transformations} give
	$\mathrm{cmMTP}_2(\phi_B(\XX_B)\mid\phi_A(\XX_A))$.
	Hence $\mathrm{cmMTP}_2(\phi(\XX))$.
\end{proof}

\begin{example}
	\label{ex:monotone-thresholding}
	Let $Z\sim N(0,1)$ and
	$\XX=(Z,2Z,3Z)$. By Proposition~\ref{prop:common-factor},
	$\mathrm{cmMTP}_2(\XX)$ holds. Define
	$\phi_i(x):=\mathbf 1_{\{x>0\}}$, $i\in[3]$, and
	$\YY:=\phi(\XX)$. Then $\YY=(I,I,I)$, where
	$I:=\mathbf 1_{\{Z>0\}}\sim\operatorname{Bernoulli}(1/2)$.
	Since each $\phi_i$ is non-decreasing,
	Corollary~\ref{cor:monotone-transformations-vector} yields
	$\mathrm{cmMTP}_2(\YY)$. For $A=\{1,2\}$, $B=\{3\}$,
	$C=\{(0,0)\}$, $D=\{(1,1)\}$, $x_3=0$, and $y_3=1$,
	the two sides of \eqref{eq:cmmtp2} are both
	$(1/2)(1/2)=1/4$.
\end{example}

\section{Structural representations of conditional total positivity} \label{sec:structural-representations}

Section~\ref{sec:structural-representations} develops two structural representations of $\mathrm{cmMTP}_2$. First, for
$K_1\in\mathcal K_{\mathrm{mLTP}_2}(A,B)$ and
$K_2\in\mathcal K_{\mathrm{LTP}_2}(B,C)$, we establish
$K_1K_2\in\mathcal K_{\mathrm{LTP}_2}(A,C)$ and derive
$\mathrm{cmMTP}_2(\XX_C\mid\XX_A)$ under the Markov factorization
$\lambda(dx_A)K_1(x_A,dx_B)K_2(x_B,dx_C)$. Second, at the copula level we obtain
$\mathrm{cmMTP}_2(\XX_B\mid\XX_A)
\Longleftrightarrow
\mathrm{cmMTP}_2(\UU_B\mid\UU_A)$
under continuous strictly increasing margins, characterize the Archimedean case through log-convexity of $h_k=(-1)^k\psi^{(k)}$, and prove the weak-closure principle $C_n\to C,\ \mathrm{cmMTP}_2(\UU^{(n)})
\Rightarrow\mathrm{cmMTP}_2(\UU)$, including singular copula limits.

\subsection{Positive kernels and Markov products}
\label{subsec:positive-kernels-markov-products}

Let $A,B,C$ be pairwise disjoint non-empty index sets. Let $\lambda$ be a probability measure on $(\Re^A,\mathcal B_A)$ and let
$K_1:\Re^A\times\mathcal B_B\to[0,1]$, and
$K_2:\Re^B\times\mathcal B_C\to[0,1]$ be Markov kernels. Their composition is the kernel $K_1K_2:\Re^A\times\mathcal B_C\to[0,1]$
defined by
\begin{equation}
	(K_1K_2)(\xx_A,E)
	:=
	\int_{\Re^B}K_2(\xx_B,E)K_1(\xx_A,d\xx_B),
	\qquad E\in\mathcal B_C.
	\label{eq:kernel-composition}
\end{equation}
The composition principle is classical in total positivity \citep{KarlinRinott1980a}. Here we formulate a measure theoretic version adapted to the set-orthant structure of $\mathrm{cmMTP}_2$. For a Markov kernel
$K:\Re^A\times\mathcal B_B\to[0,1]$ and
$\xx_B\in\Re^B$, define
$K_{\xx_B}(\xx_A):=K(\xx_A,L_{\xx_B})$.
We call $K$ lattice-positive of order two if
\begin{equation}
	K_{\xx_B}(\xx_A)K_{\yy_B}(\yy_A)
	\leq
	K_{\xx_B\wedge\yy_B}(\xx_A\wedge\yy_A)
	K_{\xx_B\vee\yy_B}(\xx_A\vee\yy_A)
	\label{eq:lattice-positive-kernel}
\end{equation}
for all $\xx_A,\yy_A\in\Re^A$ and
$\xx_B,\yy_B\in\Re^B$. The class of kernels satisfying
\eqref{eq:lattice-positive-kernel} is denoted by
$\mathcal K_{\mathrm{LTP}_2}(A,B)$. For composition, we use its measure theoretic counterpart. A kernel
$K:\Re^A\times\mathcal B_B\to[0,1]$ belongs to
$\mathcal K_{\mathrm{mLTP}_2}(A,B)$ if
\begin{equation}
	K(\xx_A,C)K(\yy_A,D)
	\leq
	K(\xx_A\wedge\yy_A,C\wedge D)
	K(\xx_A\vee\yy_A,C\vee D)
	\label{eq:measure-lattice-positive-kernel}
\end{equation}
for all $\xx_A,\yy_A\in\Re^A$ and all
$C,D\in\mathcal B_B$ such that
$C\wedge D,C\vee D\in\mathcal B_B$. Clearly,
$\mathcal K_{\mathrm{mLTP}_2}(A,B)
\subseteq\mathcal K_{\mathrm{LTP}_2}(A,B)$. The following integral form of the lattice inequality is the
basic tool for kernel composition.

\begin{lemma}[Lattice-positive kernel integration]
	\label{lem:kernel-integration}
	Let $K\in\mathcal K_{\mathrm{mLTP}_2}(A,B)$ and let
	$H:\Re^B\times\mathcal B_C\to[0,1]$ satisfy
	$H\in\mathcal K_{\mathrm{LTP}_2}(B,C)$. Then, for
	$\xx_A,\yy_A\in\Re^A$ and $\xx_C,\yy_C\in\Re^C$,
	\begin{equation}
		\begin{split}
			\int H(\uu_B,L_{\xx_C})K(\xx_A,d\uu_B)
			\int H(\vv_B,L_{\yy_C})K(\yy_A,d\vv_B)&\leq
			\int H(\ww_B,L_{\xx_C\wedge\yy_C})
			K(\xx_A\wedge\yy_A,d\ww_B)\\
			&\phantom{\leq}\times
			\int H(\zz_B,L_{\xx_C\vee\yy_C})
			K(\xx_A\vee\yy_A,d\zz_B).
		\end{split}
		\label{eq:kernel-integration}
	\end{equation}
\end{lemma}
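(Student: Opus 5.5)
The plan is to regard the four integrals in \eqref{eq:kernel-integration} as integrals of four functions against four measures on $\Re^B$, and then to prove a four-functions inequality of Ahlswede--Daykin type for these measures. Fix $\xx_A,\yy_A,\xx_C,\yy_C$ and put
\[
\alpha:=K(\xx_A,\cdot),\quad \beta:=K(\yy_A,\cdot),\quad \gamma:=K(\xx_A\wedge\yy_A,\cdot),\quad \delta:=K(\xx_A\vee\yy_A,\cdot),
\]
\[
h_1:=H(\cdot,L_{\xx_C}),\quad h_2:=H(\cdot,L_{\yy_C}),\quad h_3:=H(\cdot,L_{\xx_C\wedge\yy_C}),\quad h_4:=H(\cdot,L_{\xx_C\vee\yy_C}).
\]
Two facts are then available. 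Since $H\in\mathcal K_{\mathrm{LTP}_2}(B,C)$, we have $h_1(\uu_B)h_2(\vv_B)\le h_3(\uu_B\wedge\vv_B)h_4(\uu_B\vee\vv_B)$ for all $\uu_B,\vv_B\in\Re^B$. Since $K\in\mathcal K_{\mathrm{mLTP}_2}(A,B)$, we have $\alpha(E)\beta(F)\le\gamma(E\wedge F)\delta(E\vee F)$ for every admissible pair $E,F\in\mathcal B_B$. The claim \eqref{eq:kernel-integration} is exactly $\int h_1\,d\alpha\int h_2\,d\beta\le\int h_3\,d\gamma\int h_4\,d\delta$. This is the integral form of the four-functions theorem, which is the same ``set-integral form'' already used in the proof of Lemma~\ref{lem:cmtp2-to-cmmtp2}.

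The first step is to prove this on a finite distributive lattice. Fix a finite grid in $\Re^B$, a product of finite partitions of each coordinate axis into intervals. It induces a finite family of half-open cells $(R_{\boldsymbol j})$ indexed by a product of chains $\mathcal J$, which is a finite distributive lattice. For cells and unions of cells, the set lattice operations reduce to index lattice operations, and unions of cells are Borel. So the hypothesis on $K$ gives $\alpha(R_{\boldsymbol j})\beta(R_{\boldsymbol k})\le\gamma(R_{\boldsymbol j\wedge\boldsymbol k})\delta(R_{\boldsymbol j\vee\boldsymbol k})$, and more generally the inequality for unions of cells. For step functions $\bar h_r$ constant on cells and satisfying $\bar h_1(\boldsymbol j)\bar h_2(\boldsymbol k)\le\bar h_3(\boldsymbol j\wedge\boldsymbol k)\bar h_4(\boldsymbol j\vee\boldsymbol k)$ on $\mathcal J$, the functions $\boldsymbol j\mapsto\bar h_1(\boldsymbol j)\alpha(R_{\boldsymbol j})$, and the three analogous ones, satisfy the pointwise hypothesis of the finite Ahlswede--Daykin theorem. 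That theorem then gives $\sum\bar h_1\alpha\cdot\sum\bar h_2\beta\le\sum\bar h_3\gamma\cdot\sum\bar h_4\delta$.

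The second step, which I expect to be the main obstacle, is to pass from the step functions back to $h_1,\dots,h_4$. The cell values must be chosen so that the discrete four-functions hypothesis still holds while the integrals converge. A natural first choice is to take cell infima for $h_1,h_2$ and cell suprema for $h_3,h_4$. This preserves the hypothesis, because for $\uu\in R_{\boldsymbol j}$ and $\vv\in R_{\boldsymbol k}$ the points $\uu\wedge\vv$ and $\uu\vee\vv$ lie in $R_{\boldsymbol j\wedge\boldsymbol k}$ and $R_{\boldsymbol j\vee\boldsymbol k}$. Refining the grid, the left-hand integrals increase and the right-hand ones decrease. What is needed is convergence to the true integrals. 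That holds if each $h_r$ is continuous $\alpha$-, $\beta$-, $\gamma$-, or $\delta$-almost everywhere, with the grid restricted to a large box and the tails controlled by bounded convergence, since $0\le h_r\le1$.

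In general $H(\cdot,L_{\xx_C})$ is only Borel. I would therefore try two remedies. The first is to use the monotone class argument of Lemma~\ref{lem:cmmtp2-determining-class} to extend from indicator-type and step functions to bounded Borel functions. The second is to replace grid cells by the layer sets $\{h_1>s\}$, lifted to the product of $\Re^B$ with a real coordinate carrying Lebesgue measure, and to apply the finite theorem on a discretization of the levels $s$. If neither closes the gap, I would state the lemma under the continuity assumption and later verify that assumption for the kernels actually used.
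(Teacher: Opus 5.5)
Your reduction is exactly the paper's: the same four integrands $H(\cdot,L_{\cdot})$, the same four measures $K(\cdot,\cdot)$ and the same two hypotheses. The paper then simply invokes ``the integral four-functions inequality'' of Karlin--Rinott. You instead try to prove that inequality, and the argument stops short.

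Step~1 is sound. On cells the set lattice operations match the index lattice, the hypothesis on $K$ gives the Ahlswede--Daykin hypothesis for $\bar h_r(\boldsymbol j)$ times cell masses, and cell infima for $h_1,h_2$ and cell suprema for $h_3,h_4$ preserve the four-function hypothesis. But Step~1 only gives $L_1L_2\le U_3U_4$, with lower sums $L_r\le\int h_r$ and upper sums $U_r\ge\int h_r$. This says nothing about $\int h_1\,d\alpha\int h_2\,d\beta$ versus $\int h_3\,d\gamma\int h_4\,d\delta$ unless all these sums converge to the integrals. The hypotheses give no almost-everywhere continuity of the $h_r$ with respect to the possibly singular kernel measures.

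Here is a concrete case. Take $B=\{1,2\}$ and the constant kernel $K(\xx_A,\cdot)=\delta_0\otimes\mathrm{Unif}(0,1)$; this is $\mathrm{mMTP}_2$, being a one-dimensional law carried by the chain $\{0\}\times\Re$. Take $H(\uu_B,\cdot)=\delta_{\mathbf 1\{u_1\ge0\}}$, which is $\mathrm{LTP}_2$ because it is driven by a non-decreasing function of the single coordinate $u_1$, and set $\xx_C=\yy_C=1/2$. Then $h_1=\cdots=h_4=\mathbf 1\{u_1<0\}$ and $\int h_3\,d\gamma=0$. Yet every half-open cell $(a,b]\times(c,d]$ charged by $\gamma$ has $a<0\le b$, so $\sup h_3=1$ on it and $U_3=1$ for every grid. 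The lemma is trivially true here, but your limiting mechanism fails. So Step~2 carries the real content of the proof; it is not a technicality.

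Neither proposed remedy closes this.

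\emph{Monotone class.} The argument of Lemma~\ref{lem:cmmtp2-determining-class} works for sets because, with the other arguments fixed, the inequality survives monotone limits in one set and is already known on a generating algebra. Here the hypothesis $h_1(\uu)h_2(\vv)\le h_3(\uu\wedge\vv)h_4(\uu\vee\vv)$ couples the quadruple multiplicatively. Your approximants converge to a lower envelope of $h_1,h_2$ and an upper envelope of $h_3,h_4$, not to the $h_r$ themselves. You have no class of quadruples on which the conclusion is known whose monotone closure, within the constraint, contains the given quadruple.

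\emph{Layer-cake lift.} This is not specified. The product hypothesis does not yield level-wise inclusions between $\{h_1>s\}\wedge\{h_2>t\}$ and a level set of $h_3$, which is what you would need to feed the set hypothesis on $K$.

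\emph{Assuming continuity.} Restricting to continuous $h_r$ changes the lemma and excludes kernels like the one above.

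Note also what the paper's citation actually covers. In Karlin--Rinott the four-functions inequality concerns four integrands against one common product measure under a pointwise hypothesis. So the point you isolated is precisely what the paper's one-line proof delegates to that citation. Closing your route requires a genuine measure-level four-functions theorem under the set hypothesis.

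A natural starting point is structure you have not used. Letting $\yy_C\uparrow\infty$ in the $\mathrm{LTP}_2$ inequality shows that every $h_r$ is coordinatewise non-increasing, and each $K(\zz_A,\cdot)$ is $\mathrm{mMTP}_2$. With grid points placed at the atoms of the marginals of the four measures, you would still have to prove that these measures do not charge the remaining discontinuity sets of the $h_r$.
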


\begin{proof}
	Put
	$f_1(\uu_B):=H(\uu_B,L_{\xx_C})$,
	$f_2(\vv_B):=H(\vv_B,L_{\yy_C})$,
	$f_3(\ww_B):=H(\ww_B,L_{\xx_C\wedge\yy_C})$, and
	$f_4(\zz_B):=H(\zz_B,L_{\xx_C\vee\yy_C})$.
	Since $H\in\mathcal K_{\mathrm{LTP}_2}(B,C)$,
	$f_1(\uu_B)f_2(\vv_B)
	\leq f_3(\uu_B\wedge\vv_B)f_4(\uu_B\vee\vv_B)$.
	Set
	$K_1:=K(\xx_A,\cdot)$,
	$K_2:=K(\yy_A,\cdot)$,
	$K_3:=K(\xx_A\wedge\yy_A,\cdot)$, and
	$K_4:=K(\xx_A\vee\yy_A,\cdot)$.
	By \eqref{eq:measure-lattice-positive-kernel},
	$K_1(E)K_2(F)\leq K_3(E\wedge F)K_4(E\vee F)$ for every admissible $E,F\in\mathcal B_B$. The integral four-functions inequality \citep{KarlinRinott1980a} now yields $(\int f_1\,dK_1)(\int f_2\,dK_2)
	\leq(\int f_3\,dK_3)(\int f_4\,dK_4)$, which is
	\eqref{eq:kernel-integration}. This completes the proof.
\end{proof}

\noindent Lemma~\ref{lem:kernel-integration} yields the following composition property for lattice-positive kernels.

\begin{proposition}[Kernel composition]
	\label{prop:kernel-composition}
	Let $K\in\mathcal K_{\mathrm{mLTP}_2}(A,B)$ and
	$H\in\mathcal K_{\mathrm{LTP}_2}(B,C)$. Then
	$KH\in\mathcal K_{\mathrm{LTP}_2}(A,C)$.
\end{proposition}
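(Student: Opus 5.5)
The plan is to reduce the claim to Lemma~\ref{lem:kernel-integration} by unwinding the definition of the composed kernel. I would first fix $\xx_A,\yy_A\in\Re^A$ and $\xx_C,\yy_C\in\Re^C$, and write $(KH)_{\xx_C}(\xx_A)=(KH)(\xx_A,L_{\xx_C})$. By \eqref{eq:kernel-composition},
\begin{equation*}
(KH)_{\xx_C}(\xx_A)=\int_{\Re^B}H(\uu_B,L_{\xx_C})\,K(\xx_A,d\uu_B).
\end{equation*}
The analogous identities hold for the pairs $(\yy_A,\yy_C)$, $(\xx_A\wedge\yy_A,\xx_C\wedge\yy_C)$ and $(\xx_A\vee\yy_A,\xx_C\vee\yy_C)$.

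Next I would observe that $KH$ is again a Markov kernel from $\Re^A$ to $\mathcal B_C$. Measurability in $\xx_A$ follows from the standard monotone-class argument, since $\uu_B\mapsto H(\uu_B,E)$ is bounded and measurable. Total mass one follows because $H(\uu_B,\Re^C)=1$. Consequently $KH$ is eligible for membership in $\mathcal K_{\mathrm{LTP}_2}(A,C)$.

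Substituting the four integral representations into the defining inequality \eqref{eq:lattice-positive-kernel} for $KH$ produces, term by term, exactly the inequality \eqref{eq:kernel-integration}. Lemma~\ref{lem:kernel-integration} applies with $K\in\mathcal K_{\mathrm{mLTP}_2}(A,B)$ and $H\in\mathcal K_{\mathrm{LTP}_2}(B,C)$, so it yields
\begin{equation*}
(KH)_{\xx_C}(\xx_A)\,(KH)_{\yy_C}(\yy_A)\leq (KH)_{\xx_C\wedge\yy_C}(\xx_A\wedge\yy_A)\,(KH)_{\xx_C\vee\yy_C}(\xx_A\vee\yy_A).
\end{equation*}
Because the four points were arbitrary, this gives $KH\in\mathcal K_{\mathrm{LTP}_2}(A,C)$.

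Essentially all of the substance is carried by Lemma~\ref{lem:kernel-integration}, that is, by the four-functions integral inequality for measures satisfying the set-lattice condition. Given that lemma, the only point I would check carefully is the bookkeeping. The lemma is stated with $H$ evaluated on the lower orthants $L_{\xx_C}$, $L_{\yy_C}$, $L_{\xx_C\wedge\yy_C}$ and $L_{\xx_C\vee\yy_C}$. This matches the definition $(KH)_{\xx_C}(\xx_A)=(KH)(\xx_A,L_{\xx_C})$ precisely, so no further lattice manipulation of sets in $\Re^C$ is needed. No measurability issues for $C\wedge D$ or $C\vee D$ arise at this stage either, because only orthants in $\Re^C$ appear.
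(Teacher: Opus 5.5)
Your proposal is correct and matches the paper's proof. Both write each of the four lattice-positivity values of $KH$ as an integral of $H(\cdot,L_{\cdot})$ against the appropriate $K(\cdot,d\uu_B)$ via \eqref{eq:kernel-composition}, and then read off the required inequality directly from Lemma~\ref{lem:kernel-integration}. Your extra check that $KH$ is again a Markov kernel is a small detail the paper leaves implicit, and you argue it correctly.
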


\begin{proof}
	For $\xx_A,\yy_A\in\Re^A$ and
	$\xx_C,\yy_C\in\Re^C$, \eqref{eq:kernel-composition} gives
	$(KH)_{\xx_C}(\xx_A)
	=\int H(\uu_B,L_{\xx_C})K(\xx_A,d\uu_B)$ and analogously for
	$(\yy_A,\yy_C)$. Hence Lemma~\ref{lem:kernel-integration} yields
	\begin{equation}
		\begin{split}
			&(KH)_{\xx_C}(\xx_A)(KH)_{\yy_C}(\yy_A)\leq
			(KH)_{\xx_C\wedge\yy_C}(\xx_A\wedge\yy_A)
			(KH)_{\xx_C\vee\yy_C}(\xx_A\vee\yy_A).
		\end{split}
		\label{eq:kernel-composition-ltp2}
	\end{equation}
	Therefore, $KH\in\mathcal K_{\mathrm{LTP}_2}(A,C)$.
\end{proof}

\begin{remark}[Conditional independence and positive kernels]
	\label{rem:conditional-independence-kernels}
	Let $(\XX_A,\XX_B,\XX_C)$ have distribution
	\begin{equation}
		\mathbb P(d\xx_A,d\xx_B,d\xx_C)
		=
		\lambda(d\xx_A)
		K_1(\xx_A,d\xx_B)
		K_2(\xx_B,d\xx_C).
		\label{eq:markov-factorization}
	\end{equation}
	Then $\XX_A\perp\XX_C\mid\XX_B$, and the conditional kernel of
	$\XX_C$ given $\XX_A=\xx_A$ is
	\begin{equation}
		K_{C\mid A}(\xx_A,E)
		=
		(K_1K_2)(\xx_A,E)
		=
		\int_{\Re^B}K_2(\xx_B,E)K_1(\xx_A,d\xx_B).
		\label{eq:conditional-markov-kernel}
	\end{equation}
	While $\XX_A\perp\XX_C\mid\XX_B$ yields
	\eqref{eq:markov-factorization}, it does not by itself imply $K_{C\mid A}\in\mathcal K_{\mathrm{LTP}_2}(A,C)$. By
	Proposition~\ref{prop:kernel-composition},
	$K_1\in\mathcal K_{\mathrm{mLTP}_2}(A,B)$ and
	$K_2\in\mathcal K_{\mathrm{LTP}_2}(B,C)$ imply
	$K_{C\mid A}=K_1K_2\in\mathcal K_{\mathrm{LTP}_2}(A,C)$. Thus, for multivariate $\XX_B$, kernel positivity supplements
	$\XX_A\perp\XX_C\mid\XX_B$ compared with \citet{FuchsWang2027}.
\end{remark}

\begin{theorem} \label{thm:markov-product}
	Let $A,B,C$ be pairwise disjoint non-empty index sets and suppose
	\begin{equation}
		\mathbb P(d\xx_A,d\xx_B,d\xx_C)
		=
		\lambda(d\xx_A)K_1(\xx_A,d\xx_B)K_2(\xx_B,d\xx_C),
		\label{eq:markov-product-law}
	\end{equation}
	where $\lambda$ is $\mathrm{mMTP}_2$,
	$K_1\in\mathcal K_{\mathrm{mLTP}_2}(A,B)$, and
	$K_2\in\mathcal K_{\mathrm{LTP}_2}(B,C)$. Then
	\begin{enumerate}
		\item[(i)]
		$K_{C\mid A}=K_1K_2\in
		\mathcal K_{\mathrm{LTP}_2}(A,C)$.
		\item[(ii)]
		$\mathrm{cmMTP}_2(\XX_C\mid\XX_A)$ holds.
	\end{enumerate}
\end{theorem}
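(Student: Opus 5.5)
Part (i) follows from Remark~\ref{rem:conditional-independence-kernels} together with Proposition~\ref{prop:kernel-composition}. Under the factorization \eqref{eq:markov-product-law}, integrating out $\xx_B$ shows that the joint law of $(\XX_A,\XX_C)$ is $\lambda(d\xx_A)(K_1K_2)(\xx_A,d\xx_C)$. Hence $K_1K_2$ is a version of the regular conditional distribution $K_{C\mid A}$. Proposition~\ref{prop:kernel-composition} applied with $K=K_1$ and $H=K_2$ then gives $K_1K_2\in\mathcal K_{\mathrm{LTP}_2}(A,C)$. The only points to check are measurability of $\xx_A\mapsto (K_1K_2)(\xx_A,E)$, which is standard for kernel composition, and the identification of $K_{C\mid A}$ via \eqref{eq:disintegration}.

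For part (ii), I will work with the pair $(\XX_A,\XX_C)$ and the partition of its index set $A\cup C$ into $A$ and $C$. By Remark~\ref{rem:conditional-measure-mtp2}(i), $\XX_B$ plays no role. Fix $\xx_C,\yy_C$ and admissible $C',D'\in\mathcal B_A$, and set $k(\xx_A,\xx_C):=(K_1K_2)(\xx_A,L_{\xx_C})$. Disintegration gives
\[
\mathbb P(\XX_A\in C',\XX_C\le\xx_C)=\int \mathbf 1_{C'}(\uu_A)\,k(\uu_A,\xx_C)\,\lambda(d\uu_A).
\]
Define $f_1(\uu_A):=\mathbf 1_{C'}(\uu_A)k(\uu_A,\xx_C)$, $f_2:=\mathbf 1_{D'}k(\cdot,\yy_C)$, $f_3:=\mathbf 1_{C'\wedge D'}k(\cdot,\xx_C\wedge\yy_C)$ and $f_4:=\mathbf 1_{C'\vee D'}k(\cdot,\xx_C\vee\yy_C)$. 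Since $\uu\in C'$ and $\vv\in D'$ imply $\uu\wedge\vv\in C'\wedge D'$ and $\uu\vee\vv\in C'\vee D'$, part (i) yields the pointwise four-function inequality $f_1(\uu)f_2(\vv)\le f_3(\uu\wedge\vv)f_4(\uu\vee\vv)$. The goal is then $\int f_1d\lambda\int f_2d\lambda\le\int f_3d\lambda\int f_4d\lambda$.

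The main obstacle is this last integration step, because $\lambda$ is only $\mathrm{mMTP}_2$ and need not have a density. I would invoke the integral four-functions inequality in the same form as in the proof of Lemma~\ref{lem:kernel-integration}, taking all four measures equal to $\lambda$. Its hypothesis is $\lambda(E)\lambda(F)\le\lambda(E\wedge F)\lambda(E\vee F)$ for admissible $E,F$, which is exactly Definition~\ref{def:measure-mtp2}. If a more self-contained argument is wanted, I would proceed in three steps:
\begin{enumerate}
\item[(a)] Write nonnegative bounded $f_j$ via the layer-cake representation $f_j=\int_0^1\mathbf 1_{\{f_j>t\}}\,dt$.
\item[(b)] Observe that the pointwise inequality gives, for $st\le$ suitable levels, $\{f_1>s\}\wedge\{f_2>t\}\subseteq\{f_3>\cdot\}$. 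This lattice containment is the delicate point, and it is where the Ahlswede--Daykin/Karlin--Rinott machinery is really needed rather than a naive level-set argument.
\item[(c)] Handle measurability of the lattice sets by approximating by rectangles, as in Lemma~\ref{lem:cmmtp2-determining-class}.
\end{enumerate}
In practice I would cite the integral four-functions inequality as the paper does and concentrate on verifying its hypotheses.

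Once the integral inequality is in hand, the left side equals $\mathbb P(\XX_A\in C',\XX_C\le\xx_C)\,\mathbb P(\XX_A\in D',\XX_C\le\yy_C)$. The right side equals the corresponding probabilities for $C'\wedge D'$ with $\xx_C\wedge\yy_C$ and for $C'\vee D'$ with $\xx_C\vee\yy_C$. This is \eqref{eq:cmmtp2} for the pair $(\XX_A,\XX_C)$, so $\mathrm{cmMTP}_2(\XX_C\mid\XX_A)$ holds.
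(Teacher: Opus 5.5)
Your proposal is correct and follows the paper's proof essentially step for step. Part (i) identifies $K_{C\mid A}=K_1K_2$ by integrating out $\xx_B$ in \eqref{eq:markov-product-law} and then applies Proposition~\ref{prop:kernel-composition}; part (ii) applies the integral four-functions inequality with respect to the $\mathrm{mMTP}_2$ measure $\lambda$ to the pointwise $\mathrm{LTP}_2$ inequality for $K_{C\mid A}$ and concludes via Proposition~\ref{prop:cmmtp2-finite-measures}. Inserting the indicators of $C',D',C'\wedge D',C'\vee D'$ into the integrands only spells out what the paper leaves implicit, and, as in the paper, the weight of (ii) rests on that cited inequality rather than on your optional sketch (a)--(c).
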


\begin{proof}
	We first prove (i). By \eqref{eq:markov-product-law},
	$\XX_A\perp\XX_C\mid\XX_B$, and disintegration gives, for $E\in\mathcal B_C$, $K_{C\mid A}(\xx_A,E)
	=\int_{\Re^B}K_2(\xx_B,E)\\ K_1(\xx_A,d\xx_B)
	=(K_1K_2)(\xx_A,E)$. Since,
	$K_1\in\mathcal K_{\mathrm{mLTP}_2}(A,B)$ and
	$K_2\in\mathcal K_{\mathrm{LTP}_2}(B,C)$,
	Proposition~\ref{prop:kernel-composition} yields
	$K_{C\mid A}\in\mathcal K_{\mathrm{LTP}_2}(A,C)$.
	We now prove (ii). For $\xx_C\in\Re^C$ and
	$E\in\mathcal B_A$, define
	$\eta_{\xx_C}^{A}(E):=
	\mathbb P(\XX_A\in E,\XX_C\in L_{\xx_C})$. By (i),
	\begin{equation}
		\eta_{\xx_C}^{A}(E)
		=
		\int_EK_{C\mid A}(\xx_A,L_{\xx_C})\lambda(d\xx_A).
		\label{eq:markov-product-finite-measure}
	\end{equation}
	Fix $\xx_C,\yy_C\in\Re^C$ and admissible
	$E,F\in\mathcal B_A$. Since
	$K_{C\mid A}\in\mathcal K_{\mathrm{LTP}_2}(A,C)$,
	\begin{equation}
		\begin{split}
			&K_{C\mid A}(\xx_A,L_{\xx_C})
			K_{C\mid A}(\yy_A,L_{\yy_C})\leq
			K_{C\mid A}(\xx_A\wedge\yy_A,L_{\xx_C\wedge\yy_C})
			K_{C\mid A}(\xx_A\vee\yy_A,L_{\xx_C\vee\yy_C}).
		\end{split}
		\label{eq:markov-product-pointwise}
	\end{equation}
	As $\lambda$ is $\mathrm{mMTP}_2$, the integral
	four-functions inequality applied to
	\eqref{eq:markov-product-pointwise} yields
	\begin{equation}
		\begin{split}
			\eta_{\xx_C}^{A}(E)\eta_{\yy_C}^{A}(F)
			&\leq
			\eta_{\xx_C\wedge\yy_C}^{A}(E\wedge F)
			\eta_{\xx_C\vee\yy_C}^{A}(E\vee F).
		\end{split}
		\label{eq:markov-product-cmmtp2}
	\end{equation}
	Proposition~\ref{prop:cmmtp2-finite-measures} therefore yields $\mathrm{cmMTP}_2(\XX_C\mid\XX_A)$.
\end{proof}

\noindent Theorem~\ref{thm:markov-product} can be illustrated explicitly for finite-state kernels. The following binary Markov chain shows how lattice positivity is propagated through kernel composition and
yields $\mathrm{cmMTP}_2$ for the endpoint variables.

\begin{example}[Binary Markov product]
	\label{ex:binary-markov-product}
	Let $(X_1,X_2,X_3)\in\{0,1\}^3$ form the Markov chain
	$X_1-X_2-X_3$, with
	$\mathbb P(X_1=0)=\mathbb P(X_1=1)=1/2$ and transition kernels
	\begin{center}
		\begin{tabular}{c|cc}
			& $X_2=0$ & $X_2=1$\\ \hline
			$X_1=0$ & $0.8$ & $0.2$\\
			$X_1=1$ & $0.3$ & $0.7$
		\end{tabular}
		\qquad
		\begin{tabular}{c|cc}
			& $X_3=0$ & $X_3=1$\\ \hline
			$X_2=0$ & $0.9$ & $0.1$\\
			$X_2=1$ & $0.4$ & $0.6$
		\end{tabular}.
	\end{center}
	Take $A=\{1\}$, $B=\{2\}$, and $C=\{3\}$.
	
	\textit{(i)}
	Let $K_1$ and $K_2$ denote the two kernels above. Since the state
	spaces are binary,
	$
	K_1(0,0)K_1(1,1)=0.56>0.06
	=K_1(0,1)K_1(1,0),
	$
	and
	$
	K_2(0,0)K_2(1,1)=0.54>0.04
	=K_2(0,1)K_2(1,0).
	$
	Therefore, $K_1$ and $K_2$ satisfy the corresponding lattice-positive inequalities. In particular,
	$K_1\in\mathcal K_{\mathrm{mLTP}_2}(A,B)$ and
	$K_2\in\mathcal K_{\mathrm{LTP}_2}(B,C)$.
	
	\textit{(ii)}
	By \eqref{eq:kernel-composition}, the Markov product is
	\begin{equation}
		K_1K_2
		=
		\begin{pmatrix}
			0.8&0.2\\
			0.3&0.7
		\end{pmatrix}
		\begin{pmatrix}
			0.9&0.1\\
			0.4&0.6
		\end{pmatrix}
		=
		\begin{pmatrix}
			0.80&0.20\\
			0.55&0.45
		\end{pmatrix}.
		\label{eq:binary-markov-product}
	\end{equation}
	Hence,
	$\mathbb P(X_3=0\mid X_1=0)=0.80$ and
	$\mathbb P(X_3=0\mid X_1=1)=0.55$, while
	$\mathbb P(X_3\leq1\mid X_1=i)=1$, $i\in\{0,1\}$.
	Moreover,
	$0.80(0.45)=0.36>0.11=0.20(0.55)$, so that
	$K_1K_2\in\mathcal K_{\mathrm{LTP}_2}(A,C)$, in agreement with
	Proposition~\ref{prop:kernel-composition}. Finally, since $\lambda=(1/2,1/2)$ is
	$\mathrm{mMTP}_2$, Theorem~\ref{thm:markov-product} gives
	$\mathrm{cmMTP}_2(X_3\mid X_1)$. For example,
	with $E=\{0\}$, $F=\{1\}$, $x_3=0$, and $y_3=1$,
	$
	\mathbb P(X_1\in E,X_3\leq0)
	\mathbb P(X_1\in F,X_3\leq1)
	=(0.40)(0.50)=0.20,
	$
	whereas
	$
	\mathbb P(X_1\in E\wedge F,X_3\leq0)
	\mathbb P(X_1\in E\vee F,X_3\leq1)
	=(0.40)(0.50)=0.20.
	$
	Therefore, the defining $\mathrm{cmMTP}_2$ inequality is attained with equality for this choice.
\end{example}

\subsection{Copulas and distributional representations}
\label{subsec:copulas-representations}

We finally express $\mathrm{cmMTP}_2$ at the copula level. Let $F_i$ be the distribution function of $X_i$, $i\in[d]$, and define $U_i:=F_i(X_i)$. If the marginals are continuous, then $\UU=(U_1,\ldots,U_d)$ has uniform marginals and distribution
$C_{\XX}$ by Sklar's theorem. Since the probability integral transform is coordinatewise non-decreasing, the closure result of Theorem~\ref{thm:monotone-transformations} yields a direct measure theoretic connection between $\mathrm{cmMTP}_2(\XX)$ and the copula of $\XX$. When the marginals are strictly increasing, the quantile
transformation gives the converse implication. Thus
$\mathrm{cmMTP}_2$ can be characterized entirely through the copula measure, without requiring the existence of a copula density.

\begin{proposition}[Copula invariance]
	\label{prop:copula-invariance}
	Let $A$ and $B$ be non-empty sets forming a partition of $[d]$.
	Suppose that $F_i$, $i\in[d]$, are continuous and strictly
	increasing, and set $U_i:=F_i(X_i)$. Then
	$\mathrm{cmMTP}_2(\XX_B\mid\XX_A)$ holds if and only if
	$\mathrm{cmMTP}_2(\UU_B\mid\UU_A)$ holds.
\end{proposition}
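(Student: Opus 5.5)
Both directions follow from Theorem~\ref{thm:monotone-transformations}, applied once with the probability integral transforms and once with the quantile transforms.

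\emph{Forward direction.} Suppose $\mathrm{cmMTP}_2(\XX_B\mid\XX_A)$ holds.
\begin{enumerate}
\item[1.] Each $F_i$ is a non-decreasing Borel function on $\Re$.
\item[2.] Take $\phi_i:=F_i$ in Theorem~\ref{thm:monotone-transformations}.
\item[3.] Since $U_i=F_i(X_i)$, that theorem gives $\mathrm{cmMTP}_2(\UU_B\mid\UU_A)$ directly.
\end{enumerate}
This direction uses only monotonicity of the margins, not continuity or strict increase.

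\emph{Converse direction.} Suppose $\mathrm{cmMTP}_2(\UU_B\mid\UU_A)$ holds.
\begin{enumerate}
\item[1.] Because $F_i$ is continuous and strictly increasing, it is a bijection from $\Re$ onto $(0,1)$. Let $F_i^{-1}:(0,1)\to\Re$ be its continuous, strictly increasing inverse.
\item[2.] Extend $F_i^{-1}$ to a non-decreasing Borel function $\phi_i$ on all of $\Re$. One explicit choice is $\phi_i(u):=F_i^{-1}(\min\{\max\{u,\tfrac12\varepsilon\},1-\tfrac12\varepsilon\})$, but this is awkward. A cleaner choice is to set $\phi_i(u):=F_i^{-1}(u)$ for $u\in(0,1)$ and to keep $\phi_i$ constant on $(-\infty,0]$ and on $[1,\infty)$.
\item[3.] Handle the issue that a constant value for $\phi_i$ on $(-\infty,0]$ would have to lie below every value of $F_i^{-1}$, and $F_i^{-1}(u)\to-\infty$ as $u\downarrow0$. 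The set $\{U_i\in\{0,1\}\}$ has probability zero, because $U_i$ is uniform by continuity of $F_i$. So we may replace $\UU$ by the almost surely equal vector $\widetilde{\UU}$ with $\widetilde U_i:=U_i$ on $\{0<U_i<1\}$ and $\widetilde U_i:=1/2$ otherwise.
\item[4.] Note that $\widetilde{\UU}$ has the same joint law as $\UU$. Since $\mathrm{cmMTP}_2$ depends only on the joint law (Remark~\ref{rem:conditional-measure-mtp2}(i)), $\mathrm{cmMTP}_2(\widetilde\UU_B\mid\widetilde\UU_A)$ holds.
\item[5.] Define $\phi_i:=F_i^{-1}$ on $(0,1)$. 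Extend it to $\Re$ by $\phi_i(u):=F_i^{-1}(u)$ where it makes sense, and use the non-decreasing Borel extension $\phi_i(u):=\inf\{x:F_i(x)\geq u\}$, which equals $F_i^{-1}(u)$ on $(0,1)$. Any values on $(-\infty,0]\cup[1,\infty)$ that violate monotonicity only affect a null event.
\end{enumerate}

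\emph{Main obstacle.} The real difficulty is the one flagged in step 3. The quantile function must be non-decreasing on all of $\Re$, and it takes the values $\pm\infty$ at $0$ and $1$. I would resolve this by working with the full-measure set $\{0<U_i<1\ \forall i\}$. Concretely, I would either use $\widetilde\UU$ as in step 3, or compose with a strictly increasing homeomorphism. For the latter: apply Theorem~\ref{thm:monotone-transformations} to $\UU$ with $\psi_i(u):=\min\{\max\{u,0\},1\}$, which leaves the law unchanged. Then note that the identity $\XX=(F_i^{-1}(U_i))_i$ holds almost surely. Finally, invoke Proposition~\ref{prop:monotone-transformations}-type reasoning on $(0,1)^d$, where $F_i^{-1}$ is a genuinely strictly increasing map. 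That reasoning only required strict monotonicity and lattice-preservation, and both hold on $(0,1)$.

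\emph{Conclusion of the converse.} With the extension in place, apply Theorem~\ref{thm:monotone-transformations} to $\widetilde\UU$ and $\phi$. This yields $\mathrm{cmMTP}_2$ for $\phi(\widetilde\UU)$, and $\phi(\widetilde\UU)=\XX$ almost surely. Hence $\mathrm{cmMTP}_2(\XX_B\mid\XX_A)$ holds, again using invariance under equality in law.
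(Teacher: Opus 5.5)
Your forward direction coincides with the paper's. For the converse, the paper takes the same route you intend: it applies Theorem~\ref{thm:monotone-transformations} to $\UU$ and $Q_i=F_i^{-1}$, without commenting on the fact that $Q_i$ is defined only on $(0,1)$. You were right to flag this, but the repair you finally rely on does not close the gap.

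Theorem~\ref{thm:monotone-transformations} is stated for real-valued maps $\phi_i:\Re\to\Re$ that are non-decreasing everywhere. Its proof perturbs them to $\phi_i(x)+x/n$ and uses Proposition~\ref{prop:monotone-transformations} on all of $\Re$. No such map can agree with $F_i^{-1}$ on $(0,1)$, because monotonicity would force $\phi_i(0)\le\inf_{0<u<1}F_i^{-1}(u)=-\infty$. The generalized inverse $\inf\{x:F_i(x)\ge u\}$ of your step~5 exhibits exactly this obstruction: it equals $-\infty$ for $u\le0$ and $+\infty$ for $u\ge1$. Saying that the bad values ``only affect a null event'' does not help, because the hypothesis of the theorem is a pointwise property of $\phi_i$, not of the law of $\UU$. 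For the same reason, passing to $\widetilde\UU$ or clamping by $\psi_i$ changes nothing. Indeed $U_i=F_i(X_i)\in(0,1)$ surely, so both modifications are vacuous. Your ``Proposition-type reasoning on $(0,1)^d$'' points in the right direction, but it is only gestured at, and your stated conclusion uses the route that fails.

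The converse in fact follows directly from the definition, with no quantile extension. Since $F_i$ is a continuous strictly increasing bijection of $\Re$ onto $(0,1)$, the map $F_A:=(F_i)_{i\in A}$ is a homeomorphism and lattice isomorphism of $\Re^A$ onto $(0,1)^A$. For admissible $C,D\in\mathcal B_A$ and $\xx_B,\yy_B\in\Re^B$, put $C':=F_A(C)$, $D':=F_A(D)$, $\xx_B':=F_B(\xx_B)$ and $\yy_B':=F_B(\yy_B)$. Then:
\begin{itemize}
\item $C'$ and $D'$ are Borel, and so are $C'\wedge D'=F_A(C\wedge D)$ and $C'\vee D'=F_A(C\vee D)$, so $(C',D')$ is admissible.
\item By injectivity and strict monotonicity, $\{\XX_A\in C,\XX_B\le\xx_B\}=\{\UU_A\in C',\UU_B\le\xx_B'\}$ surely.
\item The analogous identities hold for the other three events, because $F_B(\xx_B\wedge\yy_B)=\xx_B'\wedge\yy_B'$ and $F_B(\xx_B\vee\yy_B)=\xx_B'\vee\yy_B'$.
\end{itemize}
Thus the $\UU$-inequality at $(C',D',\xx_B',\yy_B')$ is literally the $\XX$-inequality at $(C,D,\xx_B,\yy_B)$.

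If you prefer to stay within the paper's toolkit, apply Theorem~\ref{thm:monotone-transformations} to the genuine real-valued non-decreasing maps $\phi_i^{(\varepsilon)}(u):=F_i^{-1}((u\vee\varepsilon)\wedge(1-\varepsilon))$ and let $\varepsilon\downarrow0$. Then $\phi^{(\varepsilon)}(\UU)\to\XX$ surely, so Theorem~\ref{thm:cmmtp2-weak-closure} yields $\mathrm{cmMTP}_2(\XX_B\mid\XX_A)$. The limit is essential: a single clamp, as in your step~2, alters $\XX$ with positive probability.
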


\begin{proof}
	Suppose first that
	$\mathrm{cmMTP}_2(\XX_B\mid\XX_A)$ holds. Since each
	$F_i$ is non-decreasing, Theorem~\ref{thm:monotone-transformations}
	yields
	$\mathrm{cmMTP}_2(\UU_B\mid\UU_A)$. Conversely, let $Q_i:=F_i^{-1}$, $i\in[d]$. Since, $F_i$ is
	continuous and strictly increasing, $Q_i$ is strictly increasing and
	$X_i=Q_i(U_i)$ almost surely. Hence
	Theorem~\ref{thm:monotone-transformations}, applied to $\UU$ and $(Q_i)_{i\in[d]}$, yields
	$\mathrm{cmMTP}_2(\XX_B\mid\XX_A)$.
\end{proof}

\noindent Proposition~\ref{prop:copula-invariance} shows that, under continuous
strictly increasing margins, $\mathrm{cmMTP}_2$ depends only on the copula measure. We next specialize this representation to Archimedean copulas, whose multivariate construction is characterized
through $d$-monotone generators \citep{McNeilNeslehova2009,DuranteSempi2015}. Let $\UU=(U_1,\ldots,U_d)$ have a strict
Archimedean copula $C_\psi$ with generator
$\psi:[0,\infty)\to(0,1]$ and inverse generator
$\varphi:=\psi^{-1}$. Therefore,
\begin{equation}
	C_\psi(\uu)
	=
	\psi\left(\sum_{i=1}^{d}\varphi(u_i)\right),
	\qquad
	\uu\in(0,1]^d.
	\label{eq:archimedean-copula}
\end{equation}
Let $A$ and $B$ form a partition of $[d]$, with
$|A|=k$ and $|B|=d-k$, and write
$s_A(\uu_A):=\sum_{i\in A}\varphi(u_i)$ and
$s_B(\uu_B):=\sum_{j\in B}\varphi(u_j)$.
Whenever $\psi$ is $k$-times differentiable, set
$h_k(t):=(-1)^k\psi^{(k)}(t)$, $t>0$. For Archimedean copulas, the conditional lower-orthant probabilities are determined by $h_k$ evaluated at $s_A(\uu_A)$ and
$s_A(\uu_A)+s_B(\uu_B)$. Log-convexity conditions on derivatives of Archimedean generators also characterize several classical positive-dependence properties \citep{MullerScarsini2005}. The corresponding criterion for $\mathrm{cmMTP}_2$ is given next.

\begin{theorem}
	\label{thm:archimedean-cmmtp2}
	Let $\UU=(U_1,\ldots,U_d)$ have a strict Archimedean copula $C_\psi$, and let $A$ and $B$ form a partition of $[d]$ with
	$|A|=k$. Suppose $\psi$ is $k$-times continuously differentiable on $(0,\infty)$, $h_k:=(-1)^k\psi^{(k)}>0$, and the conditional
	mapping required in Theorem~\ref{thm:density-equivalence} is
	continuous. Then the following statements are equivalent
	\begin{enumerate}
		\item[(i)] $\mathrm{cmMTP}_2(\UU_B\mid\UU_A)$;
		\item[(ii)] $\mathrm{cMTP}_2(\UU_B\mid\UU_A)$;
		\item[(iii)] $h_k$ is log-convex on $(0,\infty)$.
	\end{enumerate}
\end{theorem}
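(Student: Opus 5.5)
The equivalence (i)$\Leftrightarrow$(ii) will come from Theorem~\ref{thm:density-equivalence}, and the work is to show (ii)$\Leftrightarrow$(iii) by computing the function $g$ explicitly for $C_\psi$.

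\emph{Step 1: (i)$\Leftrightarrow$(ii).} The margins $U_i$ are uniform on $(0,1)$, so $\nu_A$ is Lebesgue measure on $(0,1)^A$. Its density $p_i=\mathbf 1_{(0,1)}$ is strictly positive and continuous on the support, so the localization argument of Theorem~\ref{thm:density-equivalence} applies verbatim on the open set $(0,1)^A$. Together with the assumed continuity of the conditional mapping, this gives the equivalence.

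\emph{Step 2: compute $g$.} Because $\psi$ is $C^k$, the mixed partial derivative in $\uu_A$ of $C_\psi$ gives
\begin{equation*}
g(\uu_A,\uu_B)=\partial_{\uu_A}C_\psi(\uu_A,\uu_B)
=h_k\bigl(s_A(\uu_A)+s_B(\uu_B)\bigr)\prod_{i\in A}\bigl|\varphi'(u_i)\bigr|.
\end{equation*}
Here each derivative contributes a factor $\varphi'(u_i)<0$, and the signs combine with $\psi^{(k)}$ to give $h_k$. One then checks that $K_{B\mid A}(\uu_A,L_{\uu_B})f_A(\uu_A)$ equals this expression, via $\mu^A_{\uu_B}(C)=\int_C \partial_{\uu_A}C_\psi\,d\uu_A$ on rectangles $C$. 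The product factor is a function of separate coordinates, so it satisfies the $\mathrm{MTP}_2$ inequality with equality and cancels. Hence (ii) holds iff $(\uu_A,\uu_B)\mapsto h_k(s_A(\uu_A)+s_B(\uu_B))$ is $\mathrm{MTP}_2$.

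\emph{Step 3: (ii)$\Leftrightarrow$(iii).} Set $t_i=\varphi(u_i)$. Each map $u_i\mapsto t_i$ is a strictly decreasing bijection $(0,1)\to(0,\infty)$, and reversing all coordinates preserves $\mathrm{MTP}_2$, since it swaps $\wedge$ and $\vee$. So the condition becomes: $\mathbf t\mapsto h_k(\sum_i t_i)$ is $\mathrm{MTP}_2$ on $(0,\infty)^d$. Because $h_k>0$, this is the statement that $\ell:=\log h_k$ satisfies
\[
\ell(a)+\ell(b)\le \ell(a\wedge b)+\ell(a\vee b)
\]
in the sense of sums. Concretely, for $\mathbf s,\mathbf t$ write $p=\sum_i(s_i\wedge t_i)$ and $q=\sum_i(s_i\vee t_i)$, with $\sum s_i=a$ and $\sum t_i=b$. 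Then $p+q=a+b$ and $p\le a,b\le q$. For $\Leftarrow$, convexity of $\ell$ gives $\ell(a)+\ell(b)\le\ell(p)+\ell(q)$, since $(a,b)$ is majorized by $(p,q)$. For $\Rightarrow$, vary a single coordinate pair. Take $d\ge2$ with $i\in A$ and $j\in B$, $\mathbf s=(t_0+\delta,t_0)$ and $\mathbf t=(t_0,t_0+\delta)$ in those two coordinates, others equal. Then $\ell(c+\delta)^2\le\ell(c)\ell(c+2\delta)$ in multiplicative form, i.e.\ midpoint log-convexity, which together with continuity of $h_k$ gives log-convexity.

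\emph{Main obstacle.} The main obstacle is Step 2: rigorously identifying $g$ with the explicit formula. This needs the regular conditional probability to be given by the partial derivative, which requires differentiability of $C_\psi$ and exchanging the derivative with the density representation. It also needs the boundary of $(0,1)^A$ handled in Theorem~\ref{thm:density-equivalence}, which was stated for strictly positive densities on all of $\Re$. I would handle the boundary by first applying Theorem~\ref{thm:density-equivalence} to $\XX=Q(\UU)$ with, for instance, logistic margins, whose densities are strictly positive on $\Re$, and then invoking Proposition~\ref{prop:copula-invariance}.
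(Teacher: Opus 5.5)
Your proposal is correct and takes essentially the paper's route: (i)$\Leftrightarrow$(ii) via Theorem~\ref{thm:density-equivalence} with uniform margins, the explicit form $g=h_k(s_A+s_B)\prod_{i\in A}\{-\varphi'(u_i)\}$, cancellation of the lattice-neutral product factor, and the ordering of the four sums (the extremes bracket $r,s$ and have the same total) to reduce (ii) to log-convexity of $h_k$. You also supply two details the paper only asserts: the necessity direction via the two-coordinate perturbation, where the displayed inequality should read $h_k(c+\delta)^2\le h_k(c)h_k(c+2\delta)$ rather than use $\ell$, and the boundary fix through logistic margins and Proposition~\ref{prop:copula-invariance}, which additionally needs the easy identity $g^{\XX}(\xx)=g^{\UU}(F_1(x_1),\ldots,F_d(x_d))$ to transfer $\mathrm{cMTP}_2$ between $\XX$ and $\UU$.
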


\begin{proof}
	Since $U_i\sim\operatorname{Unif}(0,1)$, $i\in A$, we have
	$\nu_A=\bigotimes_{i\in A}\mathbb P_{U_i}
	=\lambda^k|_{(0,1)^k}$, whose density is strictly positive and
	continuous. Hence, under the stated regularity assumptions,
	Theorem~\ref{thm:density-equivalence} gives
	$
	\mathrm{cmMTP}_2(\UU_B\mid\UU_A)
	\Longleftrightarrow
	\mathrm{cMTP}_2(\UU_B\mid\UU_A).
	$
	It remains to prove $(ii)\Longleftrightarrow(iii)$.
	For $\uu_A\in(0,1)^k$ and $\uu_B\in(0,1]^{d-k}$,
	differentiating \eqref{eq:archimedean-copula} with respect to $\uu_A$ yields
	\begin{equation}
		\mathbb P(\UU_B\leq\uu_B\mid\UU_A=\uu_A)c_A(\uu_A)
		=
		h_k\!\left(s_A(\uu_A)+s_B(\uu_B)\right)
		\prod_{i\in A}\{-\varphi'(u_i)\},
		\label{eq:archimedean-conditional-map}
	\end{equation}
	where
	$c_A(\uu_A)
	=h_k(s_A(\uu_A))
	\prod_{i\in A}\{-\varphi'(u_i)\}$.
	Therefore, the mapping defining $\mathrm{cMTP}_2$ is
	$$
	g(\uu_A,\uu_B)
	=
	h_k\!\left(s_A(\uu_A)+s_B(\uu_B)\right)
	\prod_{i\in A}\{-\varphi'(u_i)\}.
	$$
	The product factor is lattice-neutral, since, for
	$\uu_A,\vv_A\in(0,1)^k$,
	\[
	\prod_{i\in A}\{-\varphi'(u_i)\}
	\prod_{i\in A}\{-\varphi'(v_i)\}
	=
	\prod_{i\in A}\{-\varphi'(u_i\wedge v_i)\}
	\prod_{i\in A}\{-\varphi'(u_i\vee v_i)\}.
	\]
Consequently, the $\mathrm{MTP}_2$ inequality for $g$ reduces to
\begin{equation}
	\begin{split}
		h_k\!\left(s_A(\uu_A)+s_B(\uu_B)\right)
		h_k\!\left(s_A(\vv_A)+s_B(\vv_B)\right)&\leq
		h_k\!\left(s_A(\uu_A\wedge\vv_A)
		+s_B(\uu_B\wedge\vv_B)\right)\\
		&\hspace{1cm}\times
		h_k\!\left(s_A(\uu_A\vee\vv_A)
		+s_B(\uu_B\vee\vv_B)\right).
	\end{split}
	\label{eq:archimedean-hk-lattice}
\end{equation}
Since $\varphi$ is decreasing,
$s_A(\uu_A\vee\vv_A)+s_B(\uu_B\vee\vv_B)
\leq
\min\{r,s\}\leq
\max\{r,s\}\leq
s_A(\uu_A\wedge\vv_A)+s_B(\uu_B\wedge\vv_B)$,
where
$r:=s_A(\uu_A)+s_B(\uu_B)$ and
$s:=s_A(\vv_A)+s_B(\vv_B)$, while the sums of the two extreme arguments and of $r,s$ coincide. Hence,
\eqref{eq:archimedean-hk-lattice} is equivalent to convexity of
$\log h_k$ \citep{MullerScarsini2005,FuchsWang2027}. Thus
$(ii)\Longleftrightarrow(iii)$, and the assertion follows. Combining this with
	$(i)\Longleftrightarrow(ii)$ proves the assertion.
\end{proof}

\noindent Theorem~\ref{thm:archimedean-cmmtp2} reduces the conditional measure theoretic property to a one-dimensional generator criterion,
which can be checked directly for standard Archimedean families.

\begin{corollary}[Clayton copulas]
	\label{cor:clayton-cmmtp2}
	Let $\UU$ have the $d$-dimensional Clayton copula with parameter
	$\theta>0$. Then, for every non-trivial partition $(A,B)$ of
	$[d]$,
	$\mathrm{cmMTP}_2(\UU_B\mid\UU_A)$ holds.
\end{corollary}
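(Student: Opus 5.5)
The plan is to apply Theorem~\ref{thm:archimedean-cmmtp2}, implication (iii)$\Rightarrow$(i), so most of the work is checking the generator criterion for the Clayton family. Write the Clayton generator as $\psi_\theta(t)=(1+\theta t)^{-1/\theta}$, $t\geq0$. It is strict, with inverse $\varphi(u)=(u^{-\theta}-1)/\theta$. It is smooth on $(0,\infty)$ and completely monotone for $\theta>0$, so $C_\psi$ is a valid copula in every dimension $d$. Fix a non-trivial partition $(A,B)$ with $|A|=k\in\{1,\ldots,d-1\}$.

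The first step is to compute $h_k$. Induction on $k$ gives
\[
\psi^{(k)}(t)=(-1)^k\prod_{j=0}^{k-1}(1+j\theta)\,(1+\theta t)^{-1/\theta-k},
\]
hence
\[
h_k(t)=c_{k,\theta}(1+\theta t)^{-1/\theta-k},\qquad c_{k,\theta}:=\prod_{j=0}^{k-1}(1+j\theta)>0 .
\]
This shows $h_k>0$. Then $\log h_k(t)=\log c_{k,\theta}-(1/\theta+k)\log(1+\theta t)$, and its second derivative is $(1/\theta+k)\theta^2(1+\theta t)^{-2}>0$. So $h_k$ is (strictly) log-convex on $(0,\infty)$, which is condition (iii).

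The second step is to verify the regularity hypothesis of Theorem~\ref{thm:archimedean-cmmtp2}, namely that the conditional mapping from Theorem~\ref{thm:density-equivalence} is continuous. By \eqref{eq:archimedean-conditional-map},
\[
g(\uu_A,\uu_B)=h_k\bigl(s_A(\uu_A)+s_B(\uu_B)\bigr)\prod_{i\in A}u_i^{-\theta-1},
\]
since $-\varphi'(u)=u^{-\theta-1}$. This is continuous on $(0,1)^k\times(0,1]^{d-k}$. It also has the explicit form required there: $\mu_A\ll\nu_A$ holds because the $k$-dimensional Clayton margin has a continuous positive density on $(0,1)^k$, and $K_{B\mid A}(\uu_A,L_{\uu_B})=g/f_A$. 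We extend $g$ to all of $\Re^A\times\Re^B$ by the usual conventions: set $g=0$ when some $u_j\leq0$ for $j\in B$, and replace $u_j$ by $u_j\wedge1$.

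The main obstacle is exactly this regularity step. The uniform marginal density $\mathbf 1_{(0,1)}$ is not strictly positive and continuous on all of $\Re$, and $g$ need not be continuous at the boundary, for example as $u_j\downarrow0$ with $\theta$ large. Rather than force Theorem~\ref{thm:density-equivalence}, I would use only the direction that is actually needed. The log-convexity computation in the proof of Theorem~\ref{thm:archimedean-cmmtp2} shows pointwise that $g$ is $\mathrm{MTP}_2$ on the open cube. Every admissible configuration with some coordinate outside the cube either gives a zero left-hand side or reduces to a coordinate clipped at $1$, and clipping is a monotone map that preserves the lattice inequality. Lemma~\ref{lem:cmtp2-to-cmmtp2}, which requires no continuity, then yields $\mathrm{cmMTP}_2(\UU_B\mid\UU_A)$ directly. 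Since the partition was arbitrary, the conclusion holds for every non-trivial $(A,B)$.
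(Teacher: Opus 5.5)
Your proof is correct. Its core, computing $h_k$ and checking $(\log h_k)''>0$, is exactly the paper's argument. The paper uses the rescaled generator $(1+t)^{-1/\theta}$, which yields the same copula and gives $(\log h_k)''(t)=(1/\theta+k)(1+t)^{-2}$. It then simply invokes Theorem~\ref{thm:archimedean-cmmtp2}, (iii)$\Rightarrow$(i), without checking its regularity hypothesis.

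Where you genuinely differ is in noticing that this hypothesis, inherited from Theorem~\ref{thm:density-equivalence}, is not literally available for copulas. The uniform density $\mathbf 1_{(0,1)}$ is neither strictly positive nor continuous on $\Re$. Continuity of $g$ on all of $\Re^A\times\Re^B$ also fails: for $d=2$, $k=1$ one has $g(u_1,u_2)=\bigl(1+(u_2^{-\theta}-1)u_1^{\theta}\bigr)^{-1/\theta-1}$ on $(0,1)\times(0,1]$. Its limit at the origin depends on the direction of approach ($2^{-1/\theta-1}$ along $u_1=u_2$, $1$ along $u_1=u_2^2$), so no choice of versions is continuous there. Your detour through Lemma~\ref{lem:cmtp2-to-cmmtp2} uses only (iii)$\Rightarrow$(ii)$\Rightarrow$(i), which is all the corollary needs and requires no continuity. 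So your route is fully rigorous, whereas the paper's is shorter but rests on a hypothesis it does not verify.

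Two small tidy-ups:

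\begin{itemize}
\item \textbf{Extension in the $A$-coordinates.} State explicitly that you take the version $f_A:=0$ off $(0,1)^k$, which is allowed because that set is $\nu_A$-null. Together with $g=0$ when some $u_j\leq0$, $j\in B$, this makes $g$ vanish off the sublattice $(0,1)^k\times(0,\infty)^{B}$, and extension by zero off a sublattice preserves $\mathrm{MTP}_2$. After clipping, the pointwise inequality you need is on $(0,1)^k\times(0,1]^{B}$ rather than the open cube. That is fine, because $s_A>0$ keeps every argument of $h_k$ in $(0,\infty)$.
\item \textbf{Location of the discontinuity.} Your stated example is misplaced: for fixed $\uu_A\in(0,1)^k$ one has $g\to0$ as $u_j\downarrow0$ for every $\theta$. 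The obstruction sits at the boundary of the $A$-block, as in the corner example above. This does not affect your proof.
\end{itemize}
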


\begin{proof}
	For $\theta>0$, the Clayton generator is
	$\psi_\theta(t)=(1+t)^{-1/\theta}$. If $|A|=k$, then
	\begin{equation}
		h_k(t)
		=
		(-1)^k\psi_\theta^{(k)}(t)
		=
		\left\{\prod_{j=0}^{k-1}
		\left(\frac{1}{\theta}+j\right)\right\}
		(1+t)^{-1/\theta-k}.
		\label{eq:clayton-hk}
	\end{equation}
	Hence,
	$(\log h_k)''(t)=(1/\theta+k)(1+t)^{-2}>0$ for $t>0$.
	Therefore, $h_k$ is log-convex, and
	Theorem~\ref{thm:archimedean-cmmtp2} yields
	$\mathrm{cmMTP}_2(\UU_B\mid\UU_A)$.
\end{proof}

\noindent The generator criterion also permits a compact comparison of standard Archimedean families. Table~\ref{tab:archimedean-cmmtp2} shows the
families for which log-convexity of $h_k=(-1)^k\psi^{(k)}$ is available throughout the indicated parameter range.

\begin{table}[ht]
	\centering
	\caption{Conditional measure-theoretic total positivity for standard
		Archimedean copulas.}
	\label{tab:archimedean-cmmtp2}
	\begin{tabular}{lll}
		\hline
		Family & Parameter & $\mathrm{cmMTP}_2(\UU_B\mid\UU_A)$\\
		\hline
		Clayton & $\theta>0$ & holds\\
		Gumbel-Hougaard & $\theta\geq1$ & holds\\
		Frank & $\theta>0$ & holds\\
		Joe & $\theta\geq1$ & holds\\
		\hline
	\end{tabular}
\end{table}

\noindent For Clayton, the assertion follows from
Corollary~\ref{cor:clayton-cmmtp2}. For the Gumbel-Hougaard and Frank families, $h_k=(-1)^k\psi^{(k)}$ is log-convex for
$k\geq1$ in the stated positive-dependence parameter ranges \citep{FuchsTschimpke2022}. Hence,
Theorem~\ref{thm:archimedean-cmmtp2} yields
$\mathrm{cmMTP}_2(\UU_B\mid\UU_A)$ for every non-trivial
partition $(A,B)$. For the Joe family, $\psi_\theta(t)=1-(1-e^{-t})^{1/\theta}$,
$\theta\geq1$. The generator $\psi_\theta$ is completely monotone on $[0,\infty)$ \citep{Joe2014,FuchsWang2027}. Hence, $h_k=(-1)^k\psi_\theta^{(k)}$ is completely monotone, and therefore, log-convex, for every $k\geq1$. Therefore,
Theorem~\ref{thm:archimedean-cmmtp2} yields
$\mathrm{cmMTP}_2(\UU_B\mid\UU_A)$ for every non-trivial
partition $(A,B)$.

The Gaussian family provides a complementary setting in which conditional measure theoretic total positivity can be related to interpretable matrix restrictions. Let
$\XX\sim N_d(\boldsymbol\mu,\Sigma)$, where $\Sigma\succ0$,
and write $K:=\Sigma^{-1}$. For a non-trivial partition
$A\dot\cup B=[d]$, partition $\Sigma$ and $K$ conformably
and define $\Sigma_{B\mid A}:=\Sigma_{BB}-\Sigma_{BA}\Sigma_{AA}^{-1}\Sigma_{AB}$.
Then $\XX_B\mid\XX_A=\xx_A$ is Gaussian with mean
$\boldsymbol\mu_B+\Sigma_{BA}\Sigma_{AA}^{-1}(\xx_A-\boldsymbol\mu_A)$, and covariance $\Sigma_{B\mid A}=K_{BB}^{-1}$.
The classical Gaussian $\mathrm{dMTP}_2$ criterion is
$K_{ij}\leq0$ for $i\neq j$ \citep{KarlinRinott1980a,FallatEtAl2017}, equivalently, nonnegative partial correlations $\rho_{ij\mid[d]\setminus\{i,j\}}=-K_{ij}/\sqrt{K_{ii}K_{jj}}\geq0$. These identities provide a matrix-based starting point for determining the additional restrictions associated
with $\mathrm{cmMTP}_2(\XX_B\mid\XX_A)$.

\begin{theorem}
	\label{thm:gaussian-cmmtp2}
	Let $\XX\sim N_d(\boldsymbol\mu,\Sigma)$, $\Sigma\succ0$,
	and $A\dot\cup B=[d]$, $A,B\neq\varnothing$.
	Set $S:=\Sigma_{AA}^{-1}$,
	$M:=\Sigma_{BA}\Sigma_{AA}^{-1}$,
	$V:=\Sigma_{BB}-\Sigma_{BA}
	\Sigma_{AA}^{-1}\Sigma_{AB}$, and
	$H(\zz):=\nabla^2\log\Phi_V(\zz)$,
	$\zz\in\Re^B$. Then
	$\mathrm{cmMTP}_2(\XX_B\mid\XX_A)$ holds if and only if,
	for every $\zz\in\Re^B$,
	\begin{equation}
		\begin{split}
			(-S+M^\top H(\zz)M)_{ij}&\geq0,
			\quad i\neq j,\ i,j\in A,\\
			(-M^\top H(\zz))_{ij}&\geq0,
			\quad i\in A,\ j\in B,\\
			H_{ij}(\zz)&\geq0,
			\quad i\neq j,\ i,j\in B.
		\end{split}
		\label{eq:gaussian-conditional-criterion}
	\end{equation}
\end{theorem}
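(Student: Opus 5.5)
Via Theorem~\ref{thm:density-equivalence}, I would reduce $\mathrm{cmMTP}_2(\XX_B\mid\XX_A)$ to the pointwise $\mathrm{MTP}_2$ property of the conditional map $g$, and then read off the Gaussian condition from the Hessian of $\log g$. The Gaussian law meets the hypotheses directly. Each marginal $\mathbb P_{X_i}$ has a strictly positive continuous density, and $\mu_A\ll\nu_A$ because $\mu_A$ has a positive continuous Lebesgue density. Writing $\phi_{S^{-1}}$ for the $N(\boldsymbol\mu_A,\Sigma_{AA})$ density, we have
$g(\xx_A,\xx_B)=\Phi_V\bigl(\xx_B-\boldsymbol\mu_B-M(\xx_A-\boldsymbol\mu_A)\bigr)\,\phi_{S^{-1}}(\xx_A-\boldsymbol\mu_A)/p_A(\xx_A)$,
which is continuous, smooth and strictly positive. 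Here $\Phi_V$ is the centred Gaussian distribution function with covariance $V$. So $\mathrm{cmMTP}_2(\XX_B\mid\XX_A)$ holds if and only if $g$ is $\mathrm{MTP}_2$ on $\Re^d$. The factor $1/p_A(\xx_A)=\prod_{i\in A}1/p_i(x_i)$ is a product of univariate functions and is lattice-neutral. As in the proof of Theorem~\ref{thm:archimedean-cmmtp2}, it can be dropped. Hence it suffices to decide when $\tilde g(\xx):=\Phi_V(\zz(\xx))\,\phi_{S^{-1}}(\xx_A-\boldsymbol\mu_A)$ is $\mathrm{MTP}_2$, where $\zz(\xx):=\xx_B-\boldsymbol\mu_B-M(\xx_A-\boldsymbol\mu_A)$.

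For a strictly positive $C^2$ function on $\Re^d$, $\mathrm{MTP}_2$ is equivalent to nonnegativity of all mixed second partials of its logarithm, i.e.\ log-supermodularity. I would cite this classical criterion \citep{KarlinRinott1980a}. Sufficiency comes from integrating the mixed partials over rectangles, coordinate pair by coordinate pair; necessity comes from taking a second-order difference quotient. Then I compute $\nabla^2\log\tilde g$. The Gaussian density contributes $-S$ to the $AA$ block and nothing elsewhere. The composition $\log\Phi_V(\zz(\xx))$ is affine-in-$\xx$ composed with $\log\Phi_V$, and $\partial\zz/\partial\xx_A=-M$, $\partial\zz/\partial\xx_B=I$. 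So its Hessian has $AA$ block $M^\top H(\zz)M$, $AB$ block $-M^\top H(\zz)$, and $BB$ block $H(\zz)$. Collecting the off-diagonal entries gives exactly the three families of inequalities in \eqref{eq:gaussian-conditional-criterion}, each evaluated at $\zz=\zz(\xx)$.

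The last step is to check that quantifying over $\xx\in\Re^d$ is the same as quantifying over $\zz\in\Re^B$. The map $\xx\mapsto\zz(\xx)$ is onto $\Re^B$: given $\zz$, fix $\xx_A$ and solve for $\xx_B$. Moreover, the $AA$ term $-S$ does not depend on $\xx$. Each condition is therefore a condition on $\zz$ alone, required for all $\zz$. This gives the stated equivalence.

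I expect the main obstacle to be justifying the smooth log-supermodularity characterization on all of $\Re^d$, with both directions holding pointwise everywhere. Concretely, this means verifying that $\log\Phi_V$ is $C^2$, which holds since $\Phi_V$ is smooth and positive. It also means handling the necessity direction through the limit of $\log\tilde g(\xx+s\boldsymbol e_i+t\boldsymbol e_j)-\log\tilde g(\xx+s\boldsymbol e_i)-\log\tilde g(\xx+t\boldsymbol e_j)+\log\tilde g(\xx)\geq0$, divided by $st$. I would handle this by applying the $\mathrm{MTP}_2$ inequality to the pair $\xx+s\boldsymbol e_i$, $\xx+t\boldsymbol e_j$ for $s,t>0$ and letting $s,t\downarrow0$.
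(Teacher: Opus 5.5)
Your proposal is correct and follows the paper's proof essentially step for step. You reduce via Theorem~\ref{thm:density-equivalence} to the $\mathrm{MTP}_2$ property of $g$, apply the log-supermodularity criterion for positive $C^2$ functions, and read the three conditions off the block Hessian of $\log g$ obtained from the chain rule with $\partial\zz/\partial\xx_A=-M$ and $\partial\zz/\partial\xx_B=I$. Two details you supply are left implicit in the paper: dropping the lattice-neutral factor $1/p_A$ (since $f_A$ is a density with respect to $\nu_A$, only the off-diagonal $AA$ entries equal those of $-S+M^\top H M$), and checking that $\xx\mapsto\zz(\xx)$ maps onto $\Re^B$.
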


\begin{proof}
	By Gaussian conditioning,
	$\XX_B\mid\XX_A=\xx_A\sim
	N_{|B|}(\boldsymbol\mu_B+
	M(\xx_A-\boldsymbol\mu_A),V)$.
	Thus, with
	$\zz:=\xx_B-\boldsymbol\mu_B-
	M(\xx_A-\boldsymbol\mu_A)$,
	the conditional map in Theorem~
	\ref{thm:density-equivalence} is
	$g(\xx_A,\xx_B)=f_A(\xx_A)\Phi_V(\zz)$.
	Since $g>0$ and $g\in C^2(\Re^d)$,
	$\mathrm{cmMTP}_2(\XX_B\mid\XX_A)$
	is equivalent to
	$\partial_{ij}^2\log g\geq0$ for all
	$i\neq j$, by Theorem~
	\ref{thm:density-equivalence}
	and the differential criterion for
	$\mathrm{MTP}_2$. Direct differentiation gives
	\begin{equation}
		\nabla^2\log g=
		\begin{pmatrix}
			-S+M^\top H(\zz)M&-M^\top H(\zz)\\
			-H(\zz)M&H(\zz)
		\end{pmatrix}.
		\label{eq:gaussian-conditional-hessian}
	\end{equation}
	Nonnegativity of its off-diagonal
	entries is precisely
	\eqref{eq:gaussian-conditional-criterion},
	proving both directions.
\end{proof}

\noindent The Gaussian criterion simplifies when $B=\{b\}$. In this case, $H$ is scalar, and conditional lattice positivity admits an explicit characterization through marginal precision and regression coefficients.

\begin{corollary}
	\label{cor:gaussian-singleton}
	Let $\XX\sim N_d(\boldsymbol\mu,\Sigma)$, $\Sigma\succ0$,
	and $A=[d]\setminus\{b\}$, $B=\{b\}$. Set
	$S:=\Sigma_{AA}^{-1}$,
	$\boldsymbol\beta:=\Sigma_{bA}\Sigma_{AA}^{-1}$,
	and $v:=\Sigma_{bb}-\Sigma_{bA}
	\Sigma_{AA}^{-1}\Sigma_{Ab}>0$.
	Then $\mathrm{cmMTP}_2(X_b\mid\XX_A)$ holds if and
	only if
	\begin{equation}
		S_{ij}+\frac{\beta_i\beta_j}{v}\leq0,
		\ i\neq j,\ i,j\in A,
		\qquad
		\beta_i\geq0,\ i\in A.
		\label{eq:gaussian-singleton-criterion}
	\end{equation}
Equivalently, $K:=\Sigma^{-1}$ is an $M$-matrix, i.e.,
$K_{ij}\leq0$ for all $i\neq j$. More precisely,
$\mathrm{cmMTP}_2(X_b\mid\XX_A)$ is equivalent to
$\mathrm{dMTP}_2(\XX)$ for nondegenerate Gaussian laws.
\end{corollary}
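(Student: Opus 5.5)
The plan is to specialize Theorem~\ref{thm:gaussian-cmmtp2} to $B=\{b\}$. Here $M=\boldsymbol\beta$ is a row vector indexed by $A$ and $V=v$ is a positive scalar. Then $\Phi_V(z)=\Phi(z/\sqrt v)$, with $\Phi$ the standard normal distribution function, and
\[
H(z)=\frac1v\,\ell''(z/\sqrt v),\qquad \ell:=\log\Phi .
\]
The third line of \eqref{eq:gaussian-conditional-criterion} is vacuous because $B$ is a singleton. The other two lines read, for all $z\in\Re$:
\[
-S_{ij}+\beta_i\beta_jH(z)\ge0 \quad (i\neq j,\ i,j\in A),
\qquad
-\beta_iH(z)\ge0 \quad (i\in A).
\]
So everything reduces to knowing the exact range of $H$.

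The key analytic step, which I expect to be the main obstacle, is to show that $\ell''(t)\in(-1,0)$ for all $t$, with $\ell''(t)\to-1$ as $t\to-\infty$ and $\ell''(t)\to0$ as $t\to+\infty$. Writing $\phi$ for the standard normal density,
\[
\ell''(t)=-\frac{\phi(t)\,\bigl(t\Phi(t)+\phi(t)\bigr)}{\Phi(t)^2}.
\]
The inequality $\ell''<0$ is the strict log-concavity of $\Phi$, which I would prove from $t\Phi(t)+\phi(t)=\int_{-\infty}^t\Phi(s)\,ds>0$. The lower bound $\ell''>-1$ is the classical Mills-ratio inequality. I would take it from the known fact that $\mathrm{Var}(Z\mid Z\le t)<1$, since that conditional variance equals $1+\ell''(t)$. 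The two limits follow from Mills-ratio asymptotics: as $t\to-\infty$, $\phi(t)/\Phi(t)\sim -t+1/t$, and as $t\to+\infty$, $\phi(t)\to0$ with $\Phi(t)\to1$. Consequently $H(z)$ ranges exactly over the open interval $(-1/v,0)$. A closed interval would also work, since the conditions are closed in $H$.

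With this range, the second condition holds for all $z$ if and only if $\beta_i\ge0$, because $H<0$ takes values arbitrarily close to $0$ and of either size. Given $\beta_i,\beta_j\ge0$, the map $H\mapsto\beta_i\beta_jH$ is nondecreasing. Hence $S_{ij}\le\beta_i\beta_jH$ for all $H\in(-1/v,0)$ is equivalent to the condition at the infimum, namely $S_{ij}\le-\beta_i\beta_j/v$. This is \eqref{eq:gaussian-singleton-criterion}.

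For the precision-matrix reformulation I would use the block-inverse (Schur complement) formula, with $b$ as the last index:
\[
K_{AA}=S+\frac{\boldsymbol\beta^\top\boldsymbol\beta}{v},\qquad K_{Ab}=-\frac{\boldsymbol\beta^\top}{v},\qquad K_{bb}=\frac1v .
\]
Thus $K_{ib}\le0$ if and only if $\beta_i\ge0$, and $K_{ij}\le0$ for $i\ne j$ in $A$ if and only if $S_{ij}+\beta_i\beta_j/v\le0$. So \eqref{eq:gaussian-singleton-criterion} says exactly that all off-diagonal entries of $K$ are nonpositive. The classical Gaussian criterion recalled before Theorem~\ref{thm:gaussian-cmmtp2} then identifies this with $\mathrm{dMTP}_2(\XX)$.
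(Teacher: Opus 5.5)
Your argument is the paper's proof. You specialize Theorem~\ref{thm:gaussian-cmmtp2} to $B=\{b\}$, use $\ell''<0$ to force $\beta_i\ge0$, and use the behaviour of $\ell''$ as $t\to-\infty$ to obtain $S_{ij}+\beta_i\beta_j/v\le0$. You then translate through $K_{AA}=S+\boldsymbol\beta^\top\boldsymbol\beta/v$ and $K_{Ab}=-\boldsymbol\beta^\top/v$. You are in fact more careful than the paper, which quotes only the two limits of $\ell''$. Sufficiency of the condition read off at $t\to-\infty$ needs the global bound $\ell''\ge-1$, and you state that bound explicitly.

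Two slips in your analytic step should be corrected, although neither affects the conclusion.

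\begin{itemize}
\item Since $\mathrm{Var}(Z\mid Z\le t)=1+\ell''(t)$, the bound $\mathrm{Var}(Z\mid Z\le t)<1$ gives $\ell''<0$, which is the log-concavity you already proved. The lower bound $\ell''>-1$ instead comes from $\mathrm{Var}(Z\mid Z\le t)>0$, which is trivial because the truncated law is nondegenerate.
\item As $t\to-\infty$ the expansion is $\lambda(t):=\phi(t)/\Phi(t)=-t-1/t+O(|t|^{-3})$. With the sign you wrote, $\ell''=-\lambda(t+\lambda)$ would tend to $+1$ rather than $-1$.
\end{itemize}

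The cleanest way to get both limits is the same variance identity: $\mathrm{Var}(Z\mid Z\le t)\to0$ as $t\to-\infty$, and $\mathrm{Var}(Z\mid Z\le t)\to1$ as $t\to+\infty$.
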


\begin{proof}
	Let $t:=(x_b-\mu_b-
	\boldsymbol\beta(\xx_A-\boldsymbol\mu_A))/\sqrt v$
	and $\ell(t):=\log\Phi(t)$. Since
	$\ell''(t)<0$, Theorem~\ref{thm:gaussian-cmmtp2}
	gives
	$\partial_{ib}^2\log g
	=-\beta_i\ell''(t)/v\geq0$
	if and only if $\beta_i\geq0$.
	Moreover, for $i\neq j$, $i,j\in A$,
	$\partial_{ij}^2\log g
	=-S_{ij}+\beta_i\beta_j\ell''(t)/v$.
	As $t\to-\infty$, $\ell''(t)\to-1$, whereas
	$\ell''(t)\to0$ as $t\to\infty$.
	Consequently, under $\boldsymbol\beta\geq\mathbf0$,
	the required condition is
	$S_{ij}+\beta_i\beta_j/v\leq0$.
	By the block inverse identity,
	$K_{AA}=S+\boldsymbol\beta^\top
	\boldsymbol\beta/v$ and
	$K_{Ab}=-\boldsymbol\beta^\top/v$.
	Hence the conditions are equivalent to
	$K_{ij}\leq0$ for all $i\neq j$.
\end{proof}

\noindent The Gaussian characterization has a direct interpretation in terms of partial correlations. In particular, the singleton
criterion identifies the entire admissible Gaussian class,
rather than merely providing a sufficient condition.

\begin{corollary}
	\label{cor:gaussian-partial-correlation}
	Let $\XX\sim N_d(\boldsymbol\mu,\Sigma)$, $\Sigma\succ0$,
	and $K:=\Sigma^{-1}$. Then the following are equivalent:
	\begin{enumerate}
		\item[(i)] $\mathrm{cmMTP}_2(\XX)$;
		\item[(ii)] $\mathrm{cmMTP}_2(X_b\mid\XX_{[d]\setminus\{b\}})$
		for every $b\in[d]$;
		\item[(iii)] $K_{ij}\leq0$ for all $i\neq j$;
		\item[(iv)] $\rho_{ij\mid[d]\setminus\{i,j\}}\geq0$
		for all $i\neq j$.
	\end{enumerate}
	Consequently, within the nondegenerate Gaussian family,
	$\mathrm{cmMTP}_2(\XX)$ and $\mathrm{dMTP}_2(\XX)$
	are equivalent.
\end{corollary}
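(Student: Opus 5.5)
The plan is to prove the cycle (iii)$\Rightarrow$(i)$\Rightarrow$(ii)$\Rightarrow$(iii), and to handle (iii)$\Leftrightarrow$(iv) separately by a sign identity.

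For (iii)$\Leftrightarrow$(iv), use $\rho_{ij\mid[d]\setminus\{i,j\}}=-K_{ij}/\sqrt{K_{ii}K_{jj}}$. Since $\Sigma\succ0$ gives $K_{ii}>0$, the two conditions have the same sign pattern. This step is immediate.

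\textbf{(iii)$\Rightarrow$(i).} Since $K$ has nonpositive off-diagonal entries, $\log$ of the Gaussian density is $-\tfrac12(\xx-\boldsymbol\mu)^\top K(\xx-\boldsymbol\mu)+\text{const}$. Its mixed second partials are $-K_{ij}\geq0$. By the differential criterion for $\mathrm{MTP}_2$ of smooth positive densities, the Lebesgue density is $\mathrm{MTP}_2$. Dividing by the product of marginal densities is lattice-neutral, so the density with respect to $\bigotimes_i\mathbb P_{X_i}$ is also $\mathrm{MTP}_2$, i.e.\ $\mathrm{dMTP}_2(\XX)$. The set-integral form of the lattice inequality (the argument of Lemma~\ref{lem:cmtp2-to-cmmtp2}, cf.\ \citet{ColangeloEtAl2006}) then yields $\mathrm{mMTP}_2(\XX)$. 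Corollary~\ref{cor:whole-vector-implications} gives $\mathrm{cmMTP}_2(\XX)$.

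An alternative route avoids $\mathrm{mMTP}_2$. For each partition $(A,B)$, the function $g$ of Theorem~\ref{thm:density-equivalence} is the Gaussian density of $\XX_A$ times $\Phi_V(\zz)$. One would verify \eqref{eq:gaussian-conditional-criterion}, but that requires sign information on the Hessian $H$ of $\log\Phi_V$ for general $V$. I would therefore prefer the $\mathrm{mMTP}_2$ route.

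\textbf{(i)$\Rightarrow$(ii).} This is trivial, because (ii) is the special case $A=[d]\setminus\{b\}$ of Definition~\ref{def:cmmtp2-vector}.

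\textbf{(ii)$\Rightarrow$(iii).} Fix any single $b$. Corollary~\ref{cor:gaussian-singleton} already shows that $\mathrm{cmMTP}_2(X_b\mid\XX_{[d]\setminus\{b\}})$ is equivalent to $K_{ij}\leq0$ for all $i\neq j$. A single $b$ therefore suffices, and (ii) gives (iii).

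It is worth double-checking the block identities used there: $K_{Ab}=-\boldsymbol\beta^\top/v$ and $K_{AA}=S+\boldsymbol\beta^\top\boldsymbol\beta/v$. Pairs involving $b$ correspond to $\beta_i\geq0$. Pairs $i,j\in A$ follow from the $t\to-\infty$ limit, where $\ell''(t)\to-1$, giving $S_{ij}+\beta_i\beta_j/v\leq0$, that is, $(K_{AA})_{ij}\leq0$.

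The final sentence of the corollary then follows because (iii) is the classical Gaussian $\mathrm{dMTP}_2$ criterion.

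\textbf{Main obstacle.} The delicate step is (iii)$\Rightarrow$(i): passing from the pointwise density inequality to $\mathrm{mMTP}_2$ for arbitrary admissible Borel sets $C,D$. This needs the four-functions/set-integral inequality. If that step is contested, I would fall back to proving $\mathrm{cmMTP}_2(\XX_B\mid\XX_A)$ for each partition directly, via Lemma~\ref{lem:cmtp2-to-cmmtp2}. That requires $g$ to be $\mathrm{MTP}_2$, which follows because $g(\xx_A,\xx_B)=\int_{L_{\xx_B}}f(\xx_A,\uu_B)\,d\uu_B$ up to lattice-neutral marginal factors. Integrating an $\mathrm{MTP}_2$ density over lower orthants preserves the lattice inequality in $(\xx_A,\xx_B)$, by the same four-functions inequality applied in the $B$-block.
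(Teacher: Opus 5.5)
Your proposal is correct and follows the paper's proof essentially step for step: (i)$\Rightarrow$(ii) by Definition~\ref{def:cmmtp2-vector}, (ii)$\Rightarrow$(iii) from a single application of Corollary~\ref{cor:gaussian-singleton}, (iii)$\Rightarrow\mathrm{dMTP}_2\Rightarrow\mathrm{mMTP}_2\Rightarrow$(i) via Corollary~\ref{cor:whole-vector-implications}, and (iii)$\Leftrightarrow$(iv) from $\rho_{ij\mid[d]\setminus\{i,j\}}=-K_{ij}/\sqrt{K_{ii}K_{jj}}$ with $K_{ii}>0$. The only difference is that you spell out the four-functions (set-integral) argument for $\mathrm{dMTP}_2\Rightarrow\mathrm{mMTP}_2$, which the paper simply asserts.
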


\begin{proof}
	By Definition~\ref{def:cmmtp2-vector}, (i) implies (ii). For any
	$b\in[d]$, Corollary~\ref{cor:gaussian-singleton}
	gives
	$\mathrm{cmMTP}_2(X_b\mid\XX_{-b})$
	if and only if $K_{ij}\leq0$, $i\neq j$.
	Thus (ii) implies (iii). Conversely, (iii) implies
	$\mathrm{dMTP}_2(\XX)$ by the classical Gaussian
	criterion \citep{KarlinRinott1980a,FallatEtAl2017}.
	Hence $\mathrm{mMTP}_2(\XX)$, and
	Corollary~2.2 yields (i). Finally,
	$\rho_{ij\mid[d]\setminus\{i,j\}}
	=-K_{ij}/\sqrt{K_{ii}K_{jj}}$ and $K_{ii}>0$
	give (iii) if and only if (iv).
\end{proof}

\noindent For $\Sigma=D_\sigma R D_\sigma$, where
$D_\sigma:=\operatorname{diag}
(\sqrt{\Sigma_{11}},\ldots,\sqrt{\Sigma_{dd}})$,
we have $K=D_\sigma^{-1}R^{-1}D_\sigma^{-1}$.
Thus the Gaussian criterion is equivalently
$(R^{-1})_{ij}\leq0$, $i\neq j$, independently
of marginal scales. In particular,
$\mathrm{cmMTP}_2(\XX)$ imposes nonnegative
pairwise partial correlations, not merely
nonnegative marginal correlations. This gives
a finite matrix-based verification of
Definition~\ref{def:conditional-measure-mtp2} for Gaussian models.

\begin{example}
	\label{ex:gaussian-partial-correlation}
	Let $\XX\sim N_3(\mathbf0,R)$, where
	\begin{equation}
		R=
		\begin{pmatrix}
			1&0.6&0.6\\
			0.6&1&0.1\\
			0.6&0.1&1
		\end{pmatrix},
		\qquad
		R^{-1}=\frac{1}{0.342}
		\begin{pmatrix}
			0.99&-0.54&-0.54\\
			-0.54&0.64&0.26\\
			-0.54&0.26&0.64
		\end{pmatrix}.
		\label{eq:gaussian-correlation-example}
	\end{equation}
	Since $\det(R)=0.342>0$ and all principal minors
	are positive, $R\succ0$. Although $R_{ij}>0$
	for every $i\neq j$,
	$(R^{-1})_{23}=0.26/0.342>0$, yielding
	$\rho_{23\mid1}
	=(0.1-0.6^2)/(1-0.6^2)
	=-0.40625<0$.
	Thus Corollary~\ref{cor:gaussian-partial-correlation}
	implies $\mathrm{cmMTP}_2(\XX)$ fails.
	Moreover, for $A=\{1,2\}$ and $B=\{3\}$,
	Corollary~\ref{cor:gaussian-singleton} gives
	$\mathrm{cmMTP}_2(X_3\mid X_1,X_2)$ false.
	Hence positive pairwise correlations do not
	suffice for conditional lattice positivity;
	the relevant Gaussian restriction is
	$R^{-1}_{ij}\leq0$, $i\neq j$.
\end{example}

\noindent\textit{Gaussian graphical interpretation.}
For $\XX\sim N_d(\boldsymbol\mu,\Sigma)$, $\Sigma\succ0$,
let $K=\Sigma^{-1}$ and
$G=(V,E)$, where $V=[d]$ and
$E=\{\{i,j\}:K_{ij}\neq0,\ i\neq j\}$.
Then $K_{ij}=0$ is equivalent to
$X_i\perp X_j\mid\XX_{[d]\setminus\{i,j\}}$,
whereas $K_{ij}<0$ corresponds to
$\rho_{ij\mid[d]\setminus\{i,j\}}>0$.
By Corollary~\ref{cor:gaussian-partial-correlation},
$\mathrm{cmMTP}_2(\XX)$ holds precisely when
$K_{ij}\leq0$, $i\neq j$. Thus the admissible
Gaussian graphical models have nonnegative
partial correlations along every edge, with
$K_{ij}=0$ encoding conditional independence.
Consequently, verification of
$\mathrm{cmMTP}_2(\XX)$ reduces to
$d(d-1)/2$ sign restrictions on $K$, rather
than the set-indexed inequalities of
Definition~\ref{def:conditional-measure-mtp2}. These restrictions provide
an interpretable covariance-selection
criterion for Gaussian graphical modeling
\citep{KarlinRinott1980a,FallatEtAl2017}.

\begin{remark}
	\label{rem:gaussian-galambos}
	\begin{enumerate}
		\noindent\item[(i)]
		Let $\UU$ have a $d$-dimensional Gaussian copula with correlation matrix $R$. If $R^{-1}$ is an $M$-matrix, i.e.,
		$(R^{-1})_{ij}\leq0$, $i\neq j$, then
		$\mathrm{dMTP}_2(\UU)$ holds
		\citep{KarlinRinott1980a,FallatEtAl2017}. Hence,
		$\mathrm{dMTP}_2(\UU)\Rightarrow
		\mathrm{cMTP}_2(\UU_B\mid\UU_A)
		\Leftrightarrow
		\mathrm{cmMTP}_2(\UU_B\mid\UU_A)$
		for every non-trivial $(A,B)$, where the equivalence is under the hypotheses of Theorem~\ref{thm:density-equivalence}.
		
		\item[(ii)]
		The Galambos family is extreme-value, not Archimedean. Therefore,
		Theorem~\ref{thm:archimedean-cmmtp2} and the criterion $h_k=(-1)^k\psi^{(k)}$ log-convex in
		Table~\ref{tab:archimedean-cmmtp2} do not apply.
		
		\item[(iii)]
		For extreme-value copulas, total positivity is instead formulated through the exponent or stable-tail dependence structure
		\citep{RottgerEtAl2023}. Hence Gaussian and Galambos copulas give two non-Archimedean directions beyond
		Theorem~\ref{thm:archimedean-cmmtp2}.
	\end{enumerate}
\end{remark}

\noindent The preceding results provide both a generator characterization and several parametric classes satisfying $\mathrm{cmMTP}_2$. To make
the Archimedean criterion explicit, consider a strict copula $C_\psi$ and a partition $(A,B)$ with $|A|=k$. By
\eqref{eq:archimedean-conditional-map}, the relevant conditional mapping is
\begin{equation}
	g(\uu_A,\uu_B)
	:=
	\mathbb P(\UU_B\leq\uu_B\mid\UU_A=\uu_A)c_A(\uu_A)
	=
	h_k\!\left(s_A(\uu_A)+s_B(\uu_B)\right)
	\prod_{i\in A}\{-\varphi'(u_i)\},
	\label{eq:archimedean-example-map}
\end{equation}
where $h_k=(-1)^k\psi^{(k)}$. Hence, for
$\uu=(\uu_A,\uu_B)$ and $\vv=(\vv_A,\vv_B)$, the pointwise criterion corresponding to Theorem~\ref{thm:archimedean-cmmtp2} is
\begin{equation}
	g(\uu)g(\vv)
	\leq
	g(\uu\wedge\vv)g(\uu\vee\vv).
	\label{eq:archimedean-example-lattice}
\end{equation}
The next example evaluates \eqref{eq:archimedean-example-lattice}
for a trivariate Clayton copula at two noncomparable points, so that all four lattice terms can be compared numerically.

\begin{example}[Trivariate Clayton copula]
	\label{ex:clayton-numerical}
	Let $\UU=(U_1,U_2,U_3)$ have the trivariate Clayton copula with parameter $\theta=2$, and take $A=\{1,2\}$ and $B=\{3\}$. Then
	$\psi(t)=(1+t)^{-1/2}$ and
	$\varphi(u)=u^{-2}-1$. By Corollary~\ref{cor:clayton-cmmtp2},
	$\mathrm{cmMTP}_2(U_3\mid(U_1,U_2))$ holds. We illustrate the
	corresponding lattice inequality in two steps.
	
	\textit{(i)}
	Since $|A|=2$,
	$h_2(t)=\psi''(t)=\frac{3}{4}(1+t)^{-5/2}$ and
	$-\varphi'(u)=2u^{-3}$. Hence, by
	\eqref{eq:archimedean-conditional-map},
	\begin{equation}
		\begin{split}
			g(u_1,u_2,u_3)
			&:=
			\mathbb P(U_3\leq u_3\mid U_1=u_1,U_2=u_2)c_A(u_1,u_2)=
			3u_1^{-3}u_2^{-3}
			\left(u_1^{-2}+u_2^{-2}+u_3^{-2}-2\right)^{-5/2}.
		\end{split}
		\label{eq:clayton-numerical-g}
	\end{equation}
	Moreover,
	$(\log h_2)''(t)=\frac{5}{2}(1+t)^{-2}>0$; thus
	Theorem~\ref{thm:archimedean-cmmtp2} yields
	$\mathrm{cmMTP}_2(U_3\mid(U_1,U_2))$.
	
	\textit{(ii)}
	Choose $\uu=(0.3,0.7,0.4)$ and
	$\vv=(0.7,0.3,0.6)$. Then $\uu\wedge\vv=(0.3,0.3,0.4)$, and
	$\uu\vee\vv=(0.7,0.7,0.6)$. From
	\eqref{eq:clayton-numerical-g},
	\[
	\begin{array}{c|cccc}
		\ww
		& \uu & \vv & \uu\wedge\vv & \uu\vee\vv\\ \hline
		g(\ww)
		&0.2564&0.4473&1.1413&0.4899
	\end{array}
	\]
	and therefore,
	\begin{equation}
		g(\uu)g(\vv)
		\approx(0.2564)(0.4473)
		=0.1147
		<
		0.5591
		=(1.1413)(0.4899)
		\approx
		g(\uu\wedge\vv)g(\uu\vee\vv).
		\label{eq:clayton-numerical-lattice}
	\end{equation}
	Thus the $\mathrm{MTP}_2$ inequality for the conditional map is strict at this pair of noncomparable points, consistently with
	Theorem~\ref{thm:archimedean-cmmtp2}.
\end{example}

\noindent We now move from particular copula families to a closure result for the copula representation itself. Let $C_n,C$ be $d$-copulas,
$\gamma_n,\gamma$ their induced probability measures on
$[0,1]^d$, and $\UU^{(n)}\sim\gamma_n$, $\UU\sim\gamma$. Since, copulas are continuous distribution functions,
$C_n(\uu)\to C(\uu)$, $\uu\in[0,1]^d$, is equivalent to
$\gamma_n\Rightarrow\gamma$. Hence,
$\gamma_n\Rightarrow\gamma$ and
$\mathrm{cmMTP}_2(\UU_B^{(n)}\mid\UU_A^{(n)})$, $n\geq1$,
suggest the limit property
$\mathrm{cmMTP}_2(\UU_B\mid\UU_A)$. This passage is entirely measure theoretic and requires neither $\gamma_n$ nor $\gamma$ to
admit a density. Moreover, for continuous strictly increasing $F_i$ and $Q_i:=F_i^{-1}$,
$Q:=(Q_i)_{i\in[d]}$ satisfies
$Q(\uu\wedge\vv)=Q(\uu)\wedge Q(\vv)$ and
$Q(\uu\vee\vv)=Q(\uu)\vee Q(\vv)$ on $(0,1)^d$, so the same closedness can be transported through
$\UU^{(n)}\Rightarrow\UU$ to
$Q(\UU^{(n)})\Rightarrow Q(\UU)$. The next Theorem~\ref{thm:closed-copula-representation} combines Theorem~\ref{thm:cmmtp2-weak-closure},
Theorem~\ref{thm:monotone-transformations}, and
Proposition~\ref{prop:copula-invariance} to show that
$\mathrm{cmMTP}_2$ defines a weakly closed copula class whose conditional total positivity is preserved under continuous coordinatewise marginal reconstruction.

\begin{theorem}
	\label{thm:closed-copula-representation}
	Let $A\dot\cup B=[d]$, $A,B\neq\varnothing$, and
	$\UU^{(n)}\sim C_n$, $\UU\sim C$, where $C_n,C$ are
	$d$-copulas. If $C_n\to C$ on $[0,1]^d$ and
	$\mathrm{cmMTP}_2(\UU_B^{(n)}\mid\UU_A^{(n)})$, $n\geq1$, then
	$\UU^{(n)}\Rightarrow\UU$ and
	$\mathrm{cmMTP}_2(\UU_B\mid\UU_A)$. Moreover, for continuous
	strictly increasing distribution functions $F_i$, $i\in[d]$, with
	$Q_i:=F_i^{-1}$, $\XX^{(n)}:=Q(\UU^{(n)})$ and
	$\XX:=Q(\UU)$ satisfy
	$\XX^{(n)}\Rightarrow\XX$ and
	$\mathrm{cmMTP}_2(X_B\mid X_A)$.
\end{theorem}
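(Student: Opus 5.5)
The plan has three steps: convert pointwise copula convergence into weak convergence, pass $\mathrm{cmMTP}_2$ to the limit with Theorem~\ref{thm:cmmtp2-weak-closure}, and then push everything through the quantile map $Q$.

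\emph{Step 1: weak convergence of the copulas.} Since $C_n$ and $C$ are distribution functions of $\gamma_n$ and $\gamma$ on $[0,1]^d$, pointwise convergence $C_n(\uu)\to C(\uu)$ at every point of $[0,1]^d$ is convergence of distribution functions. Every point is a continuity point of $C$, because a copula is Lipschitz. Extending $C_n$ and $C$ to $\Re^d$ in the usual way (values $0$ and $1$ outside the cube) gives convergence everywhere on $\Re^d$. The multivariate characterization of weak convergence by distribution functions \citep{Billingsley1999} then yields $\UU^{(n)}\Rightarrow\UU$.

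\emph{Step 2: $\mathrm{cmMTP}_2$ for the limit copula.} Theorem~\ref{thm:cmmtp2-weak-closure}, applied to $\mu_n=\gamma_n$ and $\mu=\gamma$, gives $\mathrm{cmMTP}_2(\UU_B\mid\UU_A)$. No density is needed at any stage.

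\emph{Step 3: marginal reconstruction.} Each $Q_i=F_i^{-1}$ is continuous and strictly increasing on $(0,1)$. Copula margins are uniform, so $\UU^{(n)}$ and $\UU$ lie in $(0,1)^d$ almost surely, and we may extend each $Q_i$ to a non-decreasing Borel map on $\Re$ (for instance, constant outside $(0,1)$) without changing the laws of $\XX^{(n)}$ and $\XX$.
\begin{itemize}
\item For weak convergence, $Q$ is continuous on $(0,1)^d$ and $\gamma((0,1)^d)=1$, so the discontinuity set of the extension is $\gamma$-null. The continuous mapping theorem then gives $\XX^{(n)}=Q(\UU^{(n)})\Rightarrow Q(\UU)=\XX$.
\item For the lattice property, Theorem~\ref{thm:monotone-transformations}, applied to $\UU$ and the non-decreasing maps $Q_i$, gives $\mathrm{cmMTP}_2(\XX_B\mid\XX_A)$.
\item Alternatively, each $\XX^{(n)}$ is $\mathrm{cmMTP}_2$ by the same theorem, and a second application of Theorem~\ref{thm:cmmtp2-weak-closure} gives the claim again. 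This consistency check shows the two routes commute.
\item Equivalently, $F_i(X_i)=U_i$ has copula $C$, so Proposition~\ref{prop:copula-invariance} also applies.
\end{itemize}

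\emph{Main obstacle.} The one delicate point is that $Q_i$ is defined only on $(0,1)$ and can blow up at the endpoints. The extension must therefore be chosen to be Borel and non-decreasing on all of $\Re$, so that Theorem~\ref{thm:monotone-transformations} applies. It must also be continuous $\gamma$-almost everywhere, for the continuous mapping theorem. Both hold because uniform margins give the boundary of the cube $\gamma$-measure zero.
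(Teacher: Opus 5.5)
Your three steps are the paper's own proof: pointwise convergence of the copulas gives $\gamma_n\Rightarrow\gamma$, Theorem~\ref{thm:cmmtp2-weak-closure} passes $\mathrm{cmMTP}_2$ to $\UU$, and the continuous mapping theorem together with Theorem~\ref{thm:monotone-transformations} handles $\XX=Q(\UU)$. The one point where you go beyond the paper, however, is resolved incorrectly.

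A non-decreasing $\phi:\Re\to\Re$ satisfies $\phi(0)\le\phi(u)\le\phi(1)$ for $u\in(0,1)$. It can therefore agree with $Q_i$ on $(0,1)$ only if $Q_i$ is bounded. If $F_i$ is strictly increasing on all of $\Re$, then $F_i(\Re)=(0,1)$ and $Q_i$ maps $(0,1)$ onto $\Re$ (think of $Q_i=\Phi^{-1}$). So no non-decreasing real-valued extension exists, and ``constant outside $(0,1)$'' breaks monotonicity in exactly the blow-up case you flag. That the boundary of the cube is $\gamma$-null does not rescue this, because Theorem~\ref{thm:monotone-transformations} is stated for maps that are non-decreasing on all of $\Re$. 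For the continuous mapping theorem any Borel extension suffices, so your first bullet is fine. A minor slip in Step~1: the standard extension of a copula to $\Re^d$ is not $1$ off the cube but $C$ evaluated at the coordinatewise clamp to $[0,1]$; convergence on $\Re^d$ still follows.

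The paper makes the same move tacitly: it records the lattice identities for $Q$ only on $(0,1)^d$ and then invokes Theorem~\ref{thm:monotone-transformations}. Your last bullet avoids the issue if you simply cite Proposition~\ref{prop:copula-invariance}. Indeed $X_i=Q_i(U_i)$ has distribution function $F_i$ and $F_i(X_i)=U_i$ almost surely. But the paper proves the converse half of that proposition with the same shortcut.

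A self-contained repair is direct. The map $F_A:=(F_i)_{i\in A}$ is a homeomorphism and lattice isomorphism of $\Re^A$ onto $(0,1)^A$, with inverse $Q_A$. Hence for admissible $C,D\in\mathcal B_A$ the sets $F_A(C),F_A(D)$ are Borel, and $F_A(C)\wedge F_A(D)=F_A(C\wedge D)$ and $F_A(C)\vee F_A(D)=F_A(C\vee D)$ are Borel too. Moreover $\{\XX_A\in C,\XX_B\le\xx_B\}=\{\UU_A\in F_A(C),\UU_B\le F_B(\xx_B)\}$ up to a null set, and $F_B(\xx_B\wedge\yy_B)=F_B(\xx_B)\wedge F_B(\yy_B)$, $F_B(\xx_B\vee\yy_B)=F_B(\xx_B)\vee F_B(\yy_B)$. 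Each defining inequality for $\XX$ is thus literally a defining inequality for $\UU$ at transported arguments. Equivalently, the proof of Proposition~\ref{prop:monotone-transformations} uses only preimages and injectivity, so it holds verbatim for strictly increasing maps defined on an open box of full probability.
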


\begin{proof}
	Define $\gamma_n:=\mathbb P_{\UU^{(n)}}$ and
	$\gamma:=\mathbb P_{\UU}$. Since $C$ is a copula, it is continuous on $[0,1]^d$. Moreover,
	$\gamma_n([0,\uu])=C_n(\uu)\to C(\uu)=\gamma([0,\uu])$ for every
	$\uu\in[0,1]^d$. Hence, the distribution-function characterization of weak convergence yields
	$\gamma_n\Rightarrow\gamma$ \citep{Billingsley1999}. Fix $\xx_B,\yy_B\in[0,1]^B$ and admissible
	$E,F\in\mathcal B([0,1]^A)$. For $n\geq1$, insert
	$\alpha_n(E):=\gamma_n(E\times L_{\xx_B})$,
	$\beta_n(F):=\gamma_n(F\times L_{\yy_B})$,
	$\alpha_n^\wedge(E\wedge F):=
	\gamma_n((E\wedge F)\times L_{\xx_B\wedge\yy_B})$, and
	$\alpha_n^\vee(E\vee F):=
	\gamma_n((E\vee F)\times L_{\xx_B\vee\yy_B})$.
	Since
	$\mathrm{cmMTP}_2(\UU_B^{(n)}\mid\UU_A^{(n)})$,
	Definition~\ref{def:conditional-measure-mtp2} gives
	\begin{equation}
		\alpha_n(E)\beta_n(F)
		\leq
		\alpha_n^{\wedge}(E\wedge F)
		\alpha_n^{\vee}(E\vee F).
		\label{eq:copula-approx-lattice}
	\end{equation}
	By $\gamma_n\Rightarrow\gamma$ and
	Theorem~\ref{thm:cmmtp2-weak-closure}, the weak limit inherits the
	conditional measure-theoretic lattice inequality. Therefore, for each admissible $E,F$ and $\xx_B,\yy_B$,
	\begin{equation}
		\begin{split}
			&\gamma(E\times L_{\xx_B})
			\gamma(F\times L_{\yy_B})\leq
			\gamma((E\wedge F)\times L_{\xx_B\wedge\yy_B})
			\gamma((E\vee F)\times L_{\xx_B\vee\yy_B}).
		\end{split}
		\label{eq:copula-limit-lattice}
	\end{equation}
	Hence, $\mathrm{cmMTP}_2(\UU_B\mid\UU_A)$ by Definition~\ref{def:conditional-measure-mtp2}. For the second assertion, define $Q(\uu):=(Q_i(u_i))_{i\in[d]}$. Since the marginals of every copula are uniform,
	$\mathbb P(\UU^{(n)}\in(0,1)^d)
	=\mathbb P(\UU\in(0,1)^d)=1$. Each $Q_i$ is continuous and strictly increasing on $(0,1)$; hence the continuous mapping theorem
	gives $\XX^{(n)}=Q(\UU^{(n)})\Rightarrow Q(\UU)=\XX$.
	Furthermore, for $\uu,\vv\in(0,1)^d$,
	\begin{equation}
		Q(\uu\wedge\vv)=Q(\uu)\wedge Q(\vv),
		\qquad
		Q(\uu\vee\vv)=Q(\uu)\vee Q(\vv).
		\label{eq:quantile-lattice}
	\end{equation}
	Therefore, $\mathrm{cmMTP}_2(\UU_B\mid\UU_A)$ and
	Theorem~\ref{thm:monotone-transformations} imply
	$\mathrm{cmMTP}_2(X_B\mid X_A)$. This completes the proof.
\end{proof}

\noindent Theorem~\ref{thm:closed-copula-representation} is constructed for a fixed partition $(A,B)$. Applying it simultaneously to $A\dot\cup B=[d]$ yields the corresponding closure property for the whole-vector notion in Definition~\ref{def:cmmtp2-vector}. In particular, weak limits of $\mathrm{cmMTP}_2$ copulas remain $\mathrm{cmMTP}_2$, and this
property is retained after reconstruction of arbitrary continuous strictly increasing margins.

\begin{corollary}
	\label{cor:copula-whole-vector-closure}
	Let $\UU^{(n)}\sim C_n$, $\UU\sim C$, where $C_n,C$ are	$d$-copulas and $C_n\to C$ on $[0,1]^d$. If
	$\mathrm{cmMTP}_2(\UU^{(n)})$ holds for every $n\geq1$, then	$\UU^{(n)}\Rightarrow\UU$ and $\mathrm{cmMTP}_2(\UU)$. Moreover,
	if $F_i$, $i\in[d]$, are continuous and strictly increasing, $Q_i:=F_i^{-1}$, $\XX^{(n)}:=Q(\UU^{(n)})$, and
	$\XX:=Q(\UU)$, then $\XX^{(n)}\Rightarrow\XX$ and $\mathrm{cmMTP}_2(\XX)$.
\end{corollary}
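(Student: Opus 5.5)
The corollary is a partition-by-partition application of Theorem~\ref{thm:closed-copula-representation}, combined with Definition~\ref{def:cmmtp2-vector}. First, the convergence statements do not depend on any partition. The hypothesis $C_n\to C$ pointwise on $[0,1]^d$, together with continuity of the limiting copula $C$, gives $\UU^{(n)}\Rightarrow\UU$ by the distribution-function characterization of weak convergence. This is exactly as in the first step of the proof of Theorem~\ref{thm:closed-copula-representation}.

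Next, fix an arbitrary non-trivial partition $A\dot\cup B=[d]$. By Definition~\ref{def:cmmtp2-vector}, the hypothesis $\mathrm{cmMTP}_2(\UU^{(n)})$ yields $\mathrm{cmMTP}_2(\UU^{(n)}_B\mid\UU^{(n)}_A)$ for every $n\geq1$. Theorem~\ref{thm:closed-copula-representation} then gives $\mathrm{cmMTP}_2(\UU_B\mid\UU_A)$. Its second assertion gives $\XX^{(n)}\Rightarrow\XX$ and $\mathrm{cmMTP}_2(\XX_B\mid\XX_A)$, where $\XX^{(n)}=Q(\UU^{(n)})$ and $\XX=Q(\UU)$. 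The key point is that $\XX^{(n)}$, $\XX$ and $Q$ are built from $\UU^{(n)}$, $\UU$ and $(F_i)_{i\in[d]}$ alone, with no reference to $(A,B)$. The same random vectors therefore serve every partition.

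Because $(A,B)$ was arbitrary, the conditional property holds for all $2^d-2$ non-trivial partitions simultaneously. Definition~\ref{def:cmmtp2-vector} then yields $\mathrm{cmMTP}_2(\UU)$ and $\mathrm{cmMTP}_2(\XX)$.

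There is no real obstacle. The only care needed is to confirm that the quantile transform $Q$ is applied only on the probability-one set $(0,1)^d$, which is already handled inside Theorem~\ref{thm:closed-copula-representation}. Alternatively, $\mathrm{cmMTP}_2(\XX)$ follows from $\mathrm{cmMTP}_2(\UU)$ directly via Corollary~\ref{cor:monotone-transformations-vector}, after extending each $Q_i$ to a non-decreasing Borel map on $\Re$; the values outside $(0,1)$ do not matter because they are attained with probability zero.
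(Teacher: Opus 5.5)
Your main argument is correct and is essentially the paper's proof. You apply Theorem~\ref{thm:closed-copula-representation} to each non-trivial partition $(A,B)$, note that $\UU^{(n)},\UU,\XX^{(n)},\XX$ do not depend on $(A,B)$, and conclude with Definition~\ref{def:cmmtp2-vector}.

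One caution on your alternative route. When $F_i$ is strictly increasing on all of $\Re$ (e.g.\ $F_i=\Phi$), $Q_i$ maps $(0,1)$ onto $\Re$ and is unbounded at both ends. So no real-valued non-decreasing extension of $Q_i$ to $\Re$ exists, and Corollary~\ref{cor:monotone-transformations-vector} cannot be invoked literally. Instead, transfer the defining inequality directly through the order isomorphism $Q_A:(0,1)^A\to\Re^A$, using $\{X_j\leq x_j\}=\{U_j\leq F_j(x_j)\}$ almost surely and $Q_A^{-1}(C\wedge D)=Q_A^{-1}(C)\wedge Q_A^{-1}(D)$.
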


\begin{proof}
	For every $\varnothing\neq A\subsetneq[d]$, define $B:=[d]\setminus A$. Definition~\ref{def:cmmtp2-vector} yields
	$\mathrm{cmMTP}_2(\UU_B^{(n)}\mid\UU_A^{(n)})$, $n\geq1$. Theorem~\ref{thm:closed-copula-representation} therefore yields $\mathrm{cmMTP}_2(\UU_B\mid\UU_A)$ and $\mathrm{cmMTP}_2(X_B\mid X_A)$ for every such $(A,B)$, together with $\UU^{(n)}\Rightarrow\UU$ and
	$\XX^{(n)}\Rightarrow\XX$. Definition~\ref{def:cmmtp2-vector} now implies
	$\mathrm{cmMTP}_2(\UU)$ and $\mathrm{cmMTP}_2(\XX)$.
\end{proof}

\noindent Corollary~\ref{cor:copula-whole-vector-closure} is important when a sequence of regular copula models approaches a singular dependence structure. Indeed, let $C_n\to C$ with associated measures $\gamma_n\Rightarrow\gamma$. Even if $\gamma_n\ll\lambda^d$ for every $n$, it may happen that $\gamma\perp\lambda^d$, so that no limiting copula density is available. Theorem~\ref{thm:closed-copula-representation} and
Corollary~\ref{cor:copula-whole-vector-closure} show that this loss of absolute continuity does not affect $\mathrm{cmMTP}_2$. The following Gaussian-copula approximation makes this phenomenon explicit.

\begin{example}[Singular copula limit]
	\label{ex:singular-copula-limit}
	Let $\rho_n:=1-n^{-1}$, $n\geq2$, and let
	$\UU^{(n)}=(U_1^{(n)},U_2^{(n)})$ have the Gaussian copula
	$C_{\rho_n}$ with correlation matrix
	\[
	R_n=
	\begin{pmatrix}
		1&\rho_n\\
		\rho_n&1
	\end{pmatrix}.
	\]
	Let $M(u,v):=u\wedge v$ be the comonotonic copula. Then $C_{\rho_n}\to M$ pointwise on $[0,1]^2$ and
	$\mathrm{cmMTP}_2(\UU^{(n)})$ holds for every $n\geq2$. We verify the limiting conclusion in two steps.
	
	\textit{(i)}
	For $\rho_n\in(0,1)$,
	\[
	R_n^{-1}
	=
	\frac{1}{1-\rho_n^2}
	\begin{pmatrix}
		1&-\rho_n\\
		-\rho_n&1
	\end{pmatrix},
	\]
	which is an $M$-matrix. Hence the corresponding Gaussian law is $\mathrm{dMTP}_2$ and, by Remark~\ref{rem:gaussian-galambos},
	$\mathrm{cmMTP}_2(\UU^{(n)})$ holds. Moreover,
	$C_{\rho_n}$ is absolutely continuous on $(0,1)^2$ for every $n\geq2$.
	
	\textit{(ii)}
	Let $(Z_1^{(n)},Z_2^{(n)})\sim N(\mathbf 0,R_n)$ and define $U_i^{(n)}:=\Phi(Z_i^{(n)})$. Since
	$\rho_n\uparrow1$, $(Z_1^{(n)},Z_2^{(n)})\Rightarrow(Z,Z)$, $Z\sim N(0,1)$, and
	therefore,
	\[
	\UU^{(n)}
	\Rightarrow
	(\Phi(Z),\Phi(Z))
	=:(U,U).
	\]
	The law $\gamma_M$ of $(U,U)$ is concentrated on
	$\Delta:=\{(u,u):0\leq u\leq1\}$, so that
	$\gamma_M(\Delta)=1$ whereas $\lambda^2(\Delta)=0$. Hence,
	$\gamma_M\perp\lambda^2$. Nevertheless,
	Corollary~\ref{cor:copula-whole-vector-closure} yields $\mathrm{cmMTP}_2(U,U)$. For example, at $(u,v)=(1/2,1/2)$,
	\[
	C_{\rho_n}(1/2,1/2)
	=
	\frac14+\frac{1}{2\pi}\arcsin(\rho_n)
	\longrightarrow
	\frac12
	=
	M(1/2,1/2).
	\]
	Therefore, a sequence of absolutely continuous
	$\mathrm{cmMTP}_2$ copulas converges to the singular comonotonic copula while the measure theoretic conditional total-positivity property is retained.
\end{example}

\section{Inference for conditional lattice inequalities}
\label{sec:inference}

The measure theoretic structure of $\mathrm{cmMTP}_2$ yields sample inference without density estimation. Let
$\XX^{(1)},\ldots,\XX^{(n)}\stackrel{\mathrm{iid}}{\sim}\mu$, $A\dot\cup B=[d]$, $A,B\neq\varnothing$, and fix
$\mathcal Q=\{(C_r,D_r,\xx_{B,r},\yy_{B,r}):r\in[m]\}$ with admissible $C_r,D_r\in\mathcal B_A$ and
$\xx_{B,r},\yy_{B,r}\in\Re^B$. The four probabilities in
Definition~\ref{def:conditional-measure-mtp2} admit empirical-frequency estimators, yielding $\widehat{\boldsymbol\Delta}_n=(\widehat\Delta_{n,r})_{r\in[m]}$ for the corresponding lattice contrasts $\boldsymbol\Delta=(\Delta_r)_{r\in[m]}$. A multivariate
central limit theorem and the delta method determine the joint asymptotic law of
$\sqrt n(\widehat{\boldsymbol\Delta}_n-\boldsymbol\Delta)$, providing density-free multivariate inference for the inequalities $\Delta_r\geq0$, $r\in[m]$, implied by
$\mathrm{cmMTP}_2(\XX_B\mid\XX_A)$. No absolute continuity is required; $\mu$ may be continuous, discrete, mixed, or singular.

\subsection{Empirical lattice contrasts}
\label{subsec:empirical-lattice-contrasts}

For $r\in[m]$, associate the comparison $(C_r,D_r,\xx_{B,r},\yy_{B,r})$ with the four events
$E_{r1}:=C_r\times L_{\xx_{B,r}}$, $E_{r2}:=D_r\times L_{\yy_{B,r}}$, $E_{r3}:=(C_r\wedge D_r)\times L_{\xx_{B,r}\wedge\yy_{B,r}}$, and $E_{r4}:=(C_r\vee D_r)\times L_{\xx_{B,r}\vee\yy_{B,r}}$. Define
$p_{rj}:=\mu(E_{rj})$ and $\pp:=(p_{rj})_{r\in[m],j\in[4]}\in[0,1]^{4m}$. The corresponding population contrast is
$\Delta_r:=p_{r3}p_{r4}-p_{r1}p_{r2}$. Therefore,
$\mathrm{cmMTP}_2(\XX_B\mid\XX_A)$ implies
$\Delta_r\geq0$, $r\in[m]$. From $\XX^{(1)},\ldots,\XX^{(n)}$, estimate $p_{rj}$ by the empirical frequency $\widehat p_{rj}:=n^{-1}\sum_{\ell=1}^n\mathbf 1\{\XX^{(\ell)}\in E_{rj}\}$ and define $\widehat\Delta_{n,r}:=\widehat p_{r3}\widehat p_{r4}-\widehat p_{r1}\widehat p_{r2}$. For the joint asymptotic analysis,
let $\ZZ^{(\ell)}:=(\mathbf 1\{\XX^{(\ell)}\in E_{rj}\})
_{r\in[m],j\in[4]}$, and $\widehat\pp_n:=n^{-1}\sum_{\ell=1}^n\ZZ^{(\ell)}$.
Then $\mathbb E\ZZ^{(1)}=\pp$ and $\widehat{\boldsymbol\Delta}_n=(\widehat\Delta_{n,r})_{r\in[m]}$ is a smooth function of $\widehat\pp_n$.

Let $\Sigma:=\operatorname{Cov}(\ZZ^{(1)})$. Its entries are $\Sigma_{(r,j),(s,k)}=\mu(E_{rj}\cap E_{sk})-p_{rj}p_{sk}$ for $(r,j),(s,k)\in[m]\times[4]$. Define $G:[0,1]^{4m}\to\Re^m$ by
$G_r(\pp):=p_{r3}p_{r4}-p_{r1}p_{r2}$, $r\in[m]$. Thus
$\boldsymbol\Delta=G(\pp)$ and $\widehat{\boldsymbol\Delta}_n=G(\widehat\pp_n)$. Writing
$J(\pp):=DG(\pp)$, the $r$th row of $J(\pp)$ is zero outside the four coordinates indexed by $(r,1),\ldots,(r,4)$ and equals
$(-p_{r2},-p_{r1},p_{r4},p_{r3})$ on those coordinates. Define $\Omega:=J(\pp)\Sigma J(\pp)^\top$. Theorem~\ref{thm:lattice-contrast-clt} derives the joint asymptotic distribution of the empirical lattice contrasts and provides the Gaussian basis for the
simultaneous inference developed in Subsection~\ref{subsec:simultaneous-testing}.

\begin{theorem}
	\label{thm:lattice-contrast-clt}
	For fixed $m$ and $\mathcal Q$ as above,
	\begin{equation}
		\sqrt n\bigl(
		\widehat{\boldsymbol\Delta}_n-\boldsymbol\Delta
		\bigr)
		\Rightarrow N_m(\mathbf 0,\Omega).
		\label{eq:lattice-contrast-clt}
	\end{equation}
	No absolute-continuity or continuity assumptions on $\mu$ are
	required.
\end{theorem}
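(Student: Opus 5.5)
The plan is to apply the multivariate central limit theorem to the indicator vectors $\ZZ^{(\ell)}$ and then transfer it to the contrasts through the delta method. Since the comparison set $\mathcal Q$ is fixed, the events $E_{rj}$ are fixed Borel sets. The cylinders $C_r\times L_{\xx_{B,r}}$ are Borel because $C_r\in\mathcal B_A$. The sets $(C_r\wedge D_r)\times L_{\xx_{B,r}\wedge\yy_{B,r}}$ and $(C_r\vee D_r)\times L_{\xx_{B,r}\vee\yy_{B,r}}$ are Borel by the admissibility requirement in Definition~\ref{def:conditional-measure-mtp2}. Hence each $\ZZ^{(\ell)}$ is a well-defined random vector in $\{0,1\}^{4m}$. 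The vectors $\ZZ^{(1)},\ZZ^{(2)},\ldots$ are i.i.d. because the $\XX^{(\ell)}$ are. They are bounded, so all moments exist, with $\mathbb E\ZZ^{(1)}=\pp$ and $\operatorname{Cov}(\ZZ^{(1)})=\Sigma$. The covariance entries follow from $\mathbb E[\mathbf 1_{E_{rj}}\mathbf 1_{E_{sk}}]=\mu(E_{rj}\cap E_{sk})$.

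The multivariate Lindeberg--L\'evy CLT then gives $\sqrt n(\widehat\pp_n-\pp)\Rightarrow N_{4m}(\mathbf 0,\Sigma)$. This holds even when $\Sigma$ is singular: the limit is then a degenerate Gaussian. To justify this case I would use the Cram\'er--Wold device, applying the scalar CLT to $\boldsymbol t^\top\ZZ^{(\ell)}$ and reading off a point mass whenever $\boldsymbol t^\top\Sigma\boldsymbol t=0$.

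Next I apply the delta method to $G$. Each $G_r$ is a polynomial of degree two, so $G$ is continuously differentiable on all of $\Re^{4m}$, and its Jacobian at $\pp$ is the matrix $J(\pp)$ described before the statement. Only differentiability of $G$ at $\pp$ is needed, which holds for every $\pp$, including boundary values such as $p_{rj}\in\{0,1\}$. I expect no real obstacle here, and the delta method yields $\sqrt n(G(\widehat\pp_n)-G(\pp))\Rightarrow J(\pp)N_{4m}(\mathbf 0,\Sigma)=N_m(\mathbf 0,J(\pp)\Sigma J(\pp)^\top)$.

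The polynomial structure also permits a more direct argument. Expanding gives
\begin{equation*}
\widehat\Delta_{n,r}-\Delta_r=J_r(\pp)(\widehat\pp_n-\pp)+(\widehat p_{r3}-p_{r3})(\widehat p_{r4}-p_{r4})-(\widehat p_{r1}-p_{r1})(\widehat p_{r2}-p_{r2}).
\end{equation*}
The quadratic remainder is $O_P(n^{-1})$, so after multiplying by $\sqrt n$ it is $o_P(1)$, and Slutsky's lemma finishes the argument.

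The step that needs the most care is the final remark that no continuity or absolute-continuity assumption on $\mu$ is used. I would state this explicitly. Everything above depends only on the Borel measurability of the fixed events and on the boundedness of the indicators. Unlike the weak-convergence argument of Theorem~\ref{thm:cmmtp2-weak-closure}, nothing requires the $E_{rj}$ to be $\mu$-continuity sets, and no densities appear. A degenerate $\Omega$, for instance when $p_{rj}\in\{0,1\}$ or under singular laws, is allowed, and the convergence is then to a degenerate normal law.
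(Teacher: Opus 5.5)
Your proposal is correct and follows the paper's proof: you apply the multivariate CLT to the bounded i.i.d. indicator vectors, expand each bilinear $G_r$ exactly into the linear term $J_r(\boldsymbol p)(\widehat{\boldsymbol p}_n-\boldsymbol p)$ plus a quadratic remainder that is $O_p(n^{-1})$, and finish with Slutsky's theorem, which is exactly the paper's argument. Your extra remarks (Borel measurability of the events $E_{rj}$ via admissibility, and the degenerate case where $\Sigma$ or $\Omega$ is singular) are correct refinements that the paper leaves implicit.
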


\begin{proof}
	Since $\ZZ^{(\ell)}\in\{0,1\}^{4m}$ are i.i.d. with
	$\mathbb E\ZZ^{(\ell)}=\pp$ and
	$\operatorname{Cov}(\ZZ^{(\ell)})=\Sigma$, the multivariate central limit theorem yields
	$\sqrt n(\widehat\pp_n-\pp)\Rightarrow
	N_{4m}(\mathbf0,\Sigma)$ and hence,
	$\widehat\pp_n-\pp=O_p(n^{-1/2})$.
	Put $\hh_n:=\widehat\pp_n-\pp$. For $r\in[m]$, direct expansion of
	$G_r$ gives
	\begin{equation}
		G_r(\pp+\hh_n)-G_r(\pp)
		=
		\nabla G_r(\pp)^\top\hh_n
		+h_{n,r3}h_{n,r4}-h_{n,r1}h_{n,r2}.
		\label{eq:delta-lattice-expansion}
	\end{equation}
	Since $\hh_n=O_p(n^{-1/2})$, the last two terms in
	\eqref{eq:delta-lattice-expansion} are $O_p(n^{-1})$.
	As $m<\infty$,
	$\sqrt n\{G(\widehat\pp_n)-G(\pp)\}
	=J(\pp)\sqrt n(\widehat\pp_n-\pp)+o_p(1)$.
	Slutsky's theorem therefore yields
	$
	\sqrt n\{G(\widehat\pp_n)-G(\pp)\}
	\Rightarrow
	N_m(\mathbf0,J(\pp)\Sigma J(\pp)^\top).
	$
	Since, $G(\widehat\pp_n)=\widehat{\boldsymbol\Delta}_n$,
	$G(\pp)=\boldsymbol\Delta$, and
	$\Omega=J(\pp)\Sigma J(\pp)^\top$, \eqref{eq:lattice-contrast-clt}
	follows.
\end{proof}

\noindent For implementation, the covariance matrix $\Omega$ in
Theorem~\ref{thm:lattice-contrast-clt} must be estimated from the same empirical event probabilities. The following Corollary~\ref{cor:lattice-contrast-studentization} provides a consistent plug-in estimator and the corresponding studentized limits.

\begin{corollary}
	\label{cor:lattice-contrast-studentization}
	Let
	$\widehat\Sigma_n:=n^{-1}\sum_{\ell=1}^n
	(\ZZ^{(\ell)}-\widehat\pp_n)
	(\ZZ^{(\ell)}-\widehat\pp_n)^\top$, and define
	$\widehat\Omega_n:=
	J(\widehat\pp_n)\widehat\Sigma_n
	J(\widehat\pp_n)^\top$. Then
	$\widehat\Omega_n\stackrel{p}{\longrightarrow}\Omega$.
	Moreover, if $\sigma_r^2:=\Omega_{rr}>0$ and
	$\widehat\sigma_{n,r}^2:=(\widehat\Omega_n)_{rr}$, then, for
	$r\in[m]$,
	\begin{equation}
		\frac{\sqrt n(\widehat\Delta_{n,r}-\Delta_r)}
		{\widehat\sigma_{n,r}}
		\Rightarrow N(0,1).
		\label{eq:studentized-lattice-contrast}
	\end{equation}
\end{corollary}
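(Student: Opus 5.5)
The plan is to establish consistency of $\widehat\Sigma_n$ and of $J(\widehat\pp_n)$ separately, combine them by the continuous mapping theorem, and then obtain \eqref{eq:studentized-lattice-contrast} from Theorem~\ref{thm:lattice-contrast-clt} by Slutsky's theorem.

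First, since the $\ZZ^{(\ell)}\in\{0,1\}^{4m}$ are i.i.d. and bounded, the weak law of large numbers gives $\widehat\pp_n\stackrel{p}{\to}\pp$ and $n^{-1}\sum_{\ell=1}^n\ZZ^{(\ell)}\ZZ^{(\ell)\top}\stackrel{p}{\to}\mathbb E\,\ZZ^{(1)}\ZZ^{(1)\top}$. Expanding the centred outer product yields
\[
\widehat\Sigma_n=n^{-1}\sum_{\ell=1}^n\ZZ^{(\ell)}\ZZ^{(\ell)\top}-\widehat\pp_n\widehat\pp_n^\top .
\]
Both terms converge in probability, so $\widehat\Sigma_n\stackrel{p}{\to}\mathbb E\,\ZZ^{(1)}\ZZ^{(1)\top}-\pp\pp^\top=\Sigma$. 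No moment conditions beyond boundedness are needed, and this is why no assumption on $\mu$ enters.

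Second, every entry of $J(\pp)$ is one of the coordinates $\pm p_{rj}$, so $\pp\mapsto J(\pp)$ is linear and in particular continuous. The map $(\pp,\Sigma)\mapsto J(\pp)\Sigma J(\pp)^\top$ is therefore continuous, being polynomial in the entries. The continuous mapping theorem applied to $(\widehat\pp_n,\widehat\Sigma_n)\stackrel{p}{\to}(\pp,\Sigma)$ gives $\widehat\Omega_n\stackrel{p}{\to}\Omega$.

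For the studentized statement, fix $r\in[m]$. Projecting \eqref{eq:lattice-contrast-clt} onto its $r$th coordinate, which is a continuous map, gives $\sqrt n(\widehat\Delta_{n,r}-\Delta_r)\Rightarrow N(0,\sigma_r^2)$. From the first part, $\widehat\sigma_{n,r}^2\stackrel{p}{\to}\sigma_r^2>0$. Since $t\mapsto t^{-1/2}$ is continuous at $\sigma_r^2>0$, we get $\widehat\sigma_{n,r}^{-1}\stackrel{p}{\to}\sigma_r^{-1}$. The event $\{\widehat\sigma_{n,r}=0\}$ has probability tending to zero, so the ratio is well defined with probability tending to one, and any convention on that event is asymptotically irrelevant. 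Slutsky's theorem then yields convergence of the ratio to $N(0,\sigma_r^2)/\sigma_r=N(0,1)$.

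The only delicate point is this positivity requirement. The assumption $\sigma_r^2>0$ is exactly what is needed to divide, and it can fail in degenerate configurations, for instance when all four $p_{rj}$ lie in $\{0,1\}$. I would note that the result is stated only under this assumption, so nothing further is required. Everything else reduces to standard laws of large numbers and continuity arguments.
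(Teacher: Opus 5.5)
Your proposal is correct and follows the paper's proof. The paper also applies the law of large numbers to $\widehat\pp_n$ and $\widehat\Sigma_n$, uses continuity of $J$ to get $\widehat\Omega_n\stackrel{p}{\to}\Omega$, and then combines Theorem~\ref{thm:lattice-contrast-clt} with Slutsky's theorem. Your added details (the outer-product expansion, linearity of $J$, and the vanishing probability of a zero denominator) only fill in steps the paper leaves implicit.
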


\begin{proof}
	By the law of large numbers,
	$\widehat\pp_n\stackrel{p}{\to}\pp$ and
	$\widehat\Sigma_n\stackrel{p}{\to}\Sigma$.
	Continuity of $J$ gives
	$J(\widehat\pp_n)\stackrel{p}{\to}J(\pp)$. Hence,
	$\widehat\Omega_n\stackrel{p}{\to}
	J(\pp)\Sigma J(\pp)^\top=\Omega$.
	Thus $\widehat\sigma_{n,r}\stackrel{p}{\to}\sigma_r>0$, and
	\eqref{eq:studentized-lattice-contrast} follows from
	Theorem~\ref{thm:lattice-contrast-clt} and Slutsky's theorem.
\end{proof}

\subsection{Simultaneous testing}
\label{subsec:simultaneous-testing}

The finite system $\mathcal Q$ yields the testable restriction $\boldsymbol\Delta\in[0,\infty)^m$ implied by
$\mathrm{cmMTP}_2(\XX_B\mid\XX_A)$. Accordingly, consider
$H_0:\Delta_r\geq0$, $r\in[m]$, against
$H_1:\min_{r\in[m]}\Delta_r<0$. Therefore, $H_1$ corresponds to at least one lattice violation in $\mathcal Q$; for fixed $m$, this is a joint
test of the selected inequalities, not of the full family in Definition~\ref{def:conditional-measure-mtp2}. Let $\sigma_r^2:=\Omega_{rr}>0$ and
$\widehat\sigma_{n,r}^2:=(\widehat\Omega_n)_{rr}$, $r\in[m]$, and define the maximum standardized violation
\begin{equation}
	T_n:=
	\max_{r\in[m]}
	\frac{\sqrt n[-\widehat\Delta_{n,r}]_+}
	{\widehat\sigma_{n,r}},
	\label{eq:max-lattice-test}
\end{equation}
where $[a]_+:=\max\{a,0\}$. Let $D:=\operatorname{diag}(\sigma_1,\ldots,\sigma_m)$,
$R:=D^{-1}\Omega D^{-1}$, and let
$\widehat R_n$ be its plug-in estimator obtained from
$\widehat\Omega_n$. For $Z\sim N_m(\mathbf0,R)$, denote by $c_{1-\alpha}(R)$ the $(1-\alpha)$-quantile of
$\max_{r\in[m]}[-Z_r]_+$. The feasible test rejects $H_0$ when $T_n>c_{1-\alpha}(\widehat R_n)$. Proposition~\ref{prop:simultaneous-test} establishes asymptotic size control at the composite null,
exactness at its least-favorable boundary, and consistency under fixed lattice violations.

\begin{proposition}
	\label{prop:simultaneous-test}
	Suppose $m<\infty$, $\sigma_r^2>0$, $r\in[m]$, and let
	$\varphi_n:=\mathbf 1\{T_n>c_{1-\alpha}(\widehat R_n)\}$.
	Assume that the distribution of
	$\max_{r\in[m]}[-Z_r]_+$, $Z\sim N_m(\mathbf0,R)$, is continuous
	at $c_{1-\alpha}(R)$. Then
	\begin{enumerate}
		\item[(i)] if $\boldsymbol\Delta\in[0,\infty)^m$,
		$\limsup_{n\to\infty}\mathbb P_\mu(\varphi_n=1)\leq\alpha$;
		\item[(ii)] if $\boldsymbol\Delta=\mathbf0$,
		$\mathbb P_\mu(\varphi_n=1)\to\alpha$;
		\item[(iii)] if $\min_{r\in[m]}\Delta_r<0$,
		$\mathbb P_\mu(\varphi_n=1)\to1$.
	\end{enumerate}
\end{proposition}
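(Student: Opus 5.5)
The proof combines Theorem~\ref{thm:lattice-contrast-clt} and Corollary~\ref{cor:lattice-contrast-studentization} with the continuous mapping theorem and continuity of the quantile map $R\mapsto c_{1-\alpha}(R)$. Write $\ZZ_n:=\sqrt n(\widehat{\boldsymbol\Delta}_n-\boldsymbol\Delta)$, so that $\ZZ_n\Rightarrow N_m(\mathbf0,\Omega)$. Since $\widehat\sigma_{n,r}\to\sigma_r>0$ in probability, Slutsky gives $\bigl(\sqrt n(\widehat\Delta_{n,r}-\Delta_r)/\widehat\sigma_{n,r}\bigr)_{r}\Rightarrow Z\sim N_m(\mathbf0,R)$ jointly. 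Also $\widehat R_n\to R$ in probability.

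The first preliminary step is consistency of the critical value, $c_{1-\alpha}(\widehat R_n)\to c_{1-\alpha}(R)$ in probability. The map $R'\mapsto N_m(\mathbf0,R')$ is weakly continuous, so the distribution functions of $\max_r[-Z'_r]_+$ converge at continuity points. The stated continuity at $c_{1-\alpha}(R)$ is meant to give convergence of the quantile. Here I would use the standard lemma that quantiles converge when the limiting distribution function is continuous and strictly increasing at the quantile. Strict increase holds for $c>0$ because the Gaussian maximum has positive density there. Possible atoms at $0$ are handled by the stated continuity assumption. I expect this step to be the main technical obstacle. The rest is the continuous mapping theorem plus monotonicity.

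For (i) and (ii), write
\[
T_n=\max_r\Bigl[-\tfrac{\sqrt n(\widehat\Delta_{n,r}-\Delta_r)}{\widehat\sigma_{n,r}}-\tfrac{\sqrt n\Delta_r}{\widehat\sigma_{n,r}}\Bigr]_+ .
\]
Under $H_0$ each $\Delta_r\ge0$, so $T_n\le \widetilde T_n:=\max_r[-\widehat Z_{n,r}]_+$, where $\widehat Z_{n,r}$ is the studentized contrast. The map $z\mapsto\max_r[-z_r]_+$ is continuous, so $\widetilde T_n\Rightarrow\max_r[-Z_r]_+$. Combining this with the critical-value step via Slutsky, and using continuity at $c_{1-\alpha}(R)$, gives $\mathbb P(\widetilde T_n>c_{1-\alpha}(\widehat R_n))\to\alpha$. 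Since $T_n\le\widetilde T_n$, the $\limsup$ bound in (i) follows. When $\boldsymbol\Delta=\mathbf0$, $T_n=\widetilde T_n$ exactly, which yields (ii).

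For (iii), pick $r_0$ with $\Delta_{r_0}<0$. Then
\[
\frac{\sqrt n[-\widehat\Delta_{n,r_0}]_+}{\widehat\sigma_{n,r_0}}\ge \frac{\sqrt n(-\Delta_{r_0})}{\widehat\sigma_{n,r_0}}-O_p(1)\to\infty
\]
in probability, because $\widehat\Delta_{n,r_0}\to\Delta_{r_0}$ and $\widehat\sigma_{n,r_0}\to\sigma_{r_0}$. Meanwhile $c_{1-\alpha}(\widehat R_n)=O_p(1)$; for this, a crude bound suffices even without quantile continuity, since $\widehat R_n$ has unit diagonal and a union bound gives $c_{1-\alpha}(\widehat R_n)\le\Phi^{-1}(1-\alpha/m)$. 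Hence $\mathbb P(\varphi_n=1)\to1$.
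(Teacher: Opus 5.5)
Your proof is correct and follows the paper's route. You reduce (i)--(ii) to the least-favorable point $\boldsymbol\Delta=\mathbf 0$ via consistency of $c_{1-\alpha}(\widehat R_n)$, and for (iii) you set the divergent violated coordinate against a stochastically bounded critical value. Your pathwise bound $T_n\le\widetilde T_n$ is a cleaner version of the paper's limiting argument over $I_0$. Your quantile-continuity argument (where the continuity assumption forces $c_{1-\alpha}(R)>0$) and your Bonferroni bound on $c_{1-\alpha}(\widehat R_n)$ supply details the paper only asserts.
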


\begin{proof}
	Let $I_0:=\{r\in[m]:\Delta_r=0\}$. Under $H_0$, for
	$r\notin I_0$, $\sqrt n\,\widehat\Delta_{n,r}/
	\widehat\sigma_{n,r}\stackrel{p}{\to}\infty$, whereas, by Theorem~\ref{thm:lattice-contrast-clt} and Corollary~\ref{cor:lattice-contrast-studentization},
	$(\sqrt n\,\widehat\Delta_{n,r}/\widehat\sigma_{n,r})_{r\in I_0}$ converges to the corresponding subvector of $Z\sim
	N_m(\mathbf0,R)$. Hence the asymptotic rejection probability is bounded by that obtained when $I_0=[m]$, i.e., $\boldsymbol\Delta=\mathbf0$. Since, $\widehat R_n\stackrel{p}{\to}R$,
	$c_{1-\alpha}(\widehat R_n)\stackrel{p}{\to}
	c_{1-\alpha}(R)$, and the assumed continuity at
	$c_{1-\alpha}(R)$ yields (i). If $I_0=[m]$, the bound is attained, which gives (ii). If $\Delta_{r_0}<0$ for some $r_0$, then
	$\sqrt n[-\widehat\Delta_{n,r_0}]_+/
	\widehat\sigma_{n,r_0}\stackrel{p}{\to}\infty$.
	Therefore, $T_n\stackrel{p}{\to}\infty$, whereas
	$c_{1-\alpha}(\widehat R_n)\stackrel{p}{\to}
	c_{1-\alpha}(R)<\infty$, proving (iii).
\end{proof}

\noindent Proposition~\ref{prop:simultaneous-test} concerns fixed nulls and alternatives. The local behavior of the procedure is obtained by allowing the lattice contrasts to approach the boundary at the $n^{-1/2}$ rate.

\begin{theorem}
	\label{thm:local-power}
	Let $(\mu_n)$ be a sequence of laws with lattice contrasts
	$\boldsymbol\Delta_n$ and suppose
	$\sqrt n(\widehat{\boldsymbol\Delta}_n-\boldsymbol\Delta_n)
	\Rightarrow N_m(\mathbf0,\Omega)$,
	$\sqrt n\,\boldsymbol\Delta_n\to\boldsymbol\delta\in\Re^m$, and
	$\widehat\Omega_n\stackrel{p}{\to}\Omega$, where
	$\sigma_r^2:=\Omega_{rr}>0$, $r\in[m]$. Let
	$D:=\operatorname{diag}(\sigma_1,\ldots,\sigma_m)$ and
	$R:=D^{-1}\Omega D^{-1}$. Then
	\begin{equation}
		T_n\Rightarrow
		T(\boldsymbol\delta):=
		\max_{r\in[m]}
		\left[-Z_r-\frac{\delta_r}{\sigma_r}\right]_+,
		\quad
		\ZZ\sim N_m(\mathbf0,R).
		\label{eq:local-power}
	\end{equation}
	If the distribution of $T(\boldsymbol\delta)$ is continuous at
	$c_{1-\alpha}(R)$, then
	$\mathbb P_{\mu_n}(\varphi_n=1)\to
	\mathbb P\{T(\boldsymbol\delta)>c_{1-\alpha}(R)\}$.
\end{theorem}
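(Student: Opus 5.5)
The plan is to write each studentized contrast as a centered part plus a drift, and then pass to the limit with Slutsky's theorem and the continuous mapping theorem. For $r\in[m]$ I decompose
\[
\frac{\sqrt n\,\widehat\Delta_{n,r}}{\widehat\sigma_{n,r}}
=\frac{\sigma_r}{\widehat\sigma_{n,r}}\left(\frac{\sqrt n(\widehat\Delta_{n,r}-\Delta_{n,r})}{\sigma_r}+\frac{\sqrt n\,\Delta_{n,r}}{\sigma_r}\right).
\]
From $\widehat\Omega_n\stackrel{p}{\to}\Omega$ and $\sigma_r>0$, I get $\widehat\sigma_{n,r}\stackrel{p}{\to}\sigma_r$. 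The ratio $\sigma_r/\widehat\sigma_{n,r}$ is therefore well defined with probability tending to one, and it converges to $1$ in probability.

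The assumption $\sqrt n(\widehat{\boldsymbol\Delta}_n-\boldsymbol\Delta_n)\Rightarrow N_m(\mathbf0,\Omega)$ gives $D^{-1}\sqrt n(\widehat{\boldsymbol\Delta}_n-\boldsymbol\Delta_n)\Rightarrow N_m(\mathbf0,R)$. The drift $D^{-1}\sqrt n\boldsymbol\Delta_n$ is deterministic and converges to $D^{-1}\boldsymbol\delta$. Slutsky's theorem, applied jointly in $\Re^m$, then shows that the vector of studentized contrasts converges in distribution to $W:=\ZZ'+D^{-1}\boldsymbol\delta$ with $\ZZ'\sim N_m(\mathbf0,R)$.

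The map $\boldsymbol w\mapsto\max_r[-w_r]_+$ is continuous. By the continuous mapping theorem, $T_n\Rightarrow\max_r[-Z'_r-\delta_r/\sigma_r]_+$. Since $N_m(\mathbf0,R)$ is symmetric, I may relabel $\ZZ'$ as $\ZZ$, which gives \eqref{eq:local-power}. (The statement already uses $-Z_r$, so no sign flip is actually needed.)

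For the power statement, the entries of $\widehat R_n=\widehat D_n^{-1}\widehat\Omega_n\widehat D_n^{-1}$ are continuous functions of $\widehat\Omega_n$ on the region where the diagonal is positive. Hence $\widehat R_n\stackrel{p}{\to}R$. I then need $c_{1-\alpha}(\widehat R_n)\stackrel{p}{\to}c_{1-\alpha}(R)$, and this is the main obstacle. The law of $\max_r[-Z_r]_+$ depends continuously on $R$, since Gaussian laws converge weakly when their covariances converge. Convergence of its quantile, however, requires the distribution function of the null law to be strictly increasing at $c_{1-\alpha}(R)$; continuity alone is not enough.

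I would handle this as follows. For $\alpha<1/2$, the quantile lies in $(0,\infty)$, where the law of the maximum of Gaussian coordinates has positive density, because each $Z_r$ has variance one. This gives strict monotonicity there. The same reasoning was already invoked implicitly in Proposition~\ref{prop:simultaneous-test}, so I would cite that step rather than reprove it.

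Finally, $(T_n,c_{1-\alpha}(\widehat R_n))\Rightarrow(T(\boldsymbol\delta),c_{1-\alpha}(R))$ by Slutsky's theorem, because the second component has a constant limit. Then $T_n-c_{1-\alpha}(\widehat R_n)\Rightarrow T(\boldsymbol\delta)-c_{1-\alpha}(R)$. The assumed continuity of the law of $T(\boldsymbol\delta)$ at $c_{1-\alpha}(R)$ makes $0$ a continuity point of the limit law. The Portmanteau theorem then gives $\mathbb P_{\mu_n}(\varphi_n=1)\to\mathbb P\{T(\boldsymbol\delta)>c_{1-\alpha}(R)\}$.
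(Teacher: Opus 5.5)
Your proposal is correct and follows the paper's proof. You split the studentized contrasts into $\widehat D_n^{-1}\sqrt n(\widehat{\boldsymbol\Delta}_n-\boldsymbol\Delta_n)+\widehat D_n^{-1}\sqrt n\,\boldsymbol\Delta_n$, apply Slutsky and the continuous mapping theorem to $\zz\mapsto\max_r[-z_r]_+$, and finish with $c_{1-\alpha}(\widehat R_n)\stackrel{p}{\to}c_{1-\alpha}(R)$ and the Portmanteau theorem at the continuity point.

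Your justification of the critical-value step, which the paper only asserts, is a genuine improvement, and the restriction $\alpha<1/2$ is unnecessary. The null law of $\max_r[-Z_r]_+$ has no mass on $(-\infty,0)$. Also, $\ww\mapsto\max_r w_r$ is positively homogeneous on the support $\operatorname{range}(R)$ of $-\ZZ$ and is positive at the first column of $R$, so every interval $(t,s]$ with $0\leq t<s$ has positive probability. Hence its quantile function is continuous at every level in $(0,1)$.
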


\begin{proof}
	Let $\widehat D_n:=
	\operatorname{diag}(\widehat\sigma_{n,1},\ldots,
	\widehat\sigma_{n,m})$. Since
	$\widehat\Omega_n\stackrel{p}{\to}\Omega$ and
	$\min_{r\in[m]}\sigma_r^2>0$,
	$\widehat D_n\stackrel{p}{\to}D$. Furthermore,
	\[
	\widehat D_n^{-1}\sqrt n\,\widehat{\boldsymbol\Delta}_n
	=
	\widehat D_n^{-1}
	\sqrt n(\widehat{\boldsymbol\Delta}_n-\boldsymbol\Delta_n)
	+
	\widehat D_n^{-1}\sqrt n\,\boldsymbol\Delta_n
	\Rightarrow
	\ZZ+D^{-1}\boldsymbol\delta,
	\]
	by Slutsky's theorem. Since
	$\zz\mapsto\max_{r\in[m]}[-z_r]_+$ is continuous,
	the continuous mapping theorem yields
	\eqref{eq:local-power}. Moreover,
	$\widehat R_n\stackrel{p}{\to}R$ implies
	$c_{1-\alpha}(\widehat R_n)\stackrel{p}{\to}
	c_{1-\alpha}(R)$. Continuity of the distribution of
	$T(\boldsymbol\delta)$ at $c_{1-\alpha}(R)$ therefore yields
	$\mathbb P_{\mu_n}(\varphi_n=1)\to
	\mathbb P\{T(\boldsymbol\delta)>c_{1-\alpha}(R)\}$.
\end{proof}

\noindent The preceding results make the finite-system test directly implementable from empirical event frequencies. The following numerical illustration shows how simultaneous lattice violations are detected from
$\widehat{\boldsymbol\Delta}_n$ and $\widehat\Omega_n$, without estimating either a joint or conditional density.

\begin{example}[Simultaneous lattice test]
	\label{ex:simultaneous-lattice-test}
	Let $n=500$, $m=2$, and suppose the empirical probabilities associated
	with two comparisons in $\mathcal Q$ yield
	$\widehat{\boldsymbol\Delta}_n=(-0.020,0.008)^\top$ and
	$\widehat\Omega_n=\operatorname{diag}(0.0400,0.0256)$.
	Thus
	$\widehat\sigma_{n,1}=0.20$,
	$\widehat\sigma_{n,2}=0.16$, and $\widehat R_n=I_2$. From
	\eqref{eq:max-lattice-test},
	\[
	T_n
	=
	\max\left\{
	\frac{\sqrt{500}(0.020)}{0.20},
	0
	\right\}
	=2.236.
	\]
	For $\alpha=0.05$ and $\ZZ\sim N_2(\mathbf0,I_2)$,
	$c_{0.95}(I_2)=\Phi^{-1}(\sqrt{0.95})\approx1.955$.
	Hence $T_n>c_{0.95}(I_2)$ and $H_0$ is rejected. The first comparison
	therefore provides evidence of a lattice violation within
	$\mathcal Q$. This conclusion concerns the selected finite system and
	does not, by itself, constitute an omnibus test of
	$\mathrm{cmMTP}_2(\XX_B\mid\XX_A)$.
\end{example}

\noindent\textit{Statistical interpretation.}
The hypotheses in Subsection~\ref{subsec:simultaneous-testing} concern
$H_0:\boldsymbol\Delta\in[0,\infty)^m$, a necessary
finite-dimensional restriction implied by
$\mathrm{cmMTP}_2(\XX_B\mid\XX_A)$, rather than
Definition~\ref{def:conditional-measure-mtp2} in its entirety. Thus
$\min_{r\in[m]}\Delta_r<0$ implies
$\neg\mathrm{cmMTP}_2(\XX_B\mid\XX_A)$, whereas
$\boldsymbol\Delta\in[0,\infty)^m$ does not establish
$\mathrm{cmMTP}_2(\XX_B\mid\XX_A)$.
Consequently, rejection of $H_0$ at asymptotic level
$\alpha$ provides statistical evidence against
$\mathrm{cmMTP}_2$ and excludes, at the corresponding
error level, models whose probability laws satisfy
Definition~\ref{def:conditional-measure-mtp2}. Non-rejection establishes neither
$\mathrm{cmMTP}_2$ nor its structural consequences.
For example, rejection rules out the joint
$\mathrm{cmMTP}_2$ restriction imposed by a
Gaussian model with $K=\Sigma^{-1}$ an $M$-matrix,
or by an Archimedean copula satisfying the
log-convexity criterion of Theorem~
\ref{thm:archimedean-cmmtp2}, provided the
selected comparisons and model assumptions
are appropriate. Hence, the procedure serves as
a density-free specification diagnostic for
positively dependent multivariate models, while
formal verification of Definition~\ref{def:conditional-measure-mtp2} requires
control over an increasingly rich collection of
lattice inequalities.

\noindent\textit{Confidence bounds and model assessment.}
For $r\in[m]$, let
$\widehat\sigma_{n,r}^2=(\widehat\Omega_n)_{rr}>0$
and define the simultaneous lower confidence bounds
$L_{n,r}:=\widehat\Delta_{n,r}
-c_{1-\alpha}(\widehat R_n)
\widehat\sigma_{n,r}/\sqrt n$.
By Theorem~\ref{thm:lattice-contrast-clt} and Corollary~\ref{cor:lattice-contrast-studentization}, under the continuity condition of Proposition~\ref{prop:simultaneous-test},
\begin{equation}
	\mathbb P_\mu\left(
	\Delta_r\geq L_{n,r},\ r\in[m]
	\right)\longrightarrow1-\alpha.
	\label{eq:simultaneous-lattice-bounds}
\end{equation}
Hence, $L_{n,r}>0$, $r\in[m]$, provides simultaneous
evidence for the selected inequalities, but does not imply
$\mathrm{cmMTP}_2(\XX_B\mid\XX_A)$. To detect violations,
define the simultaneous upper bounds
$U_{n,r}:=\widehat\Delta_{n,r}
+c_{1-\alpha}(\widehat R_n)
\widehat\sigma_{n,r}/\sqrt n$.
Under the same regularity conditions,
$\mathbb P_\mu(\Delta_r\leq U_{n,r},\
r\in[m])\to1-\alpha$.
Hence $\min_{r\in[m]}U_{n,r}<0$ rejects the
corresponding finite-system restriction and provides
evidence against Definition~\ref{def:conditional-measure-mtp2}. For
$\mathcal M_+:=\{\mu_\vartheta:\vartheta\in\Theta,\
\mathrm{cmMTP}_2(\mu_\vartheta)\}$, such rejection
excludes $\mathcal M_+$ as a correctly specified
model class at asymptotic level $\alpha$.
In particular, Gaussian models with
$(R^{-1})_{ij}\leq0$, $i\neq j$, and Archimedean
models satisfying the criterion of
Theorem~\ref{thm:archimedean-cmmtp2}
are subject to these restrictions.
Conversely, non-rejection does not establish
$\mu\in\mathcal M_+$, since $m<\infty$.
Thus the empirical inequalities provide
density-free specification diagnostics rather
than a finite-sample characterization of
$\mathrm{cmMTP}_2$.

\section{Discussion}
\label{sec:discussion}

\noindent\textit{Main findings and implications.}
This paper develops a density-free theory of conditional
multivariate total positivity through the set-lattice
formulation $\mathrm{cmMTP}_2(\XX_B\mid\XX_A)$.
The principal distinction from distribution-function
$\mathrm{MTP}_2$ is the additional restriction on
$\mu^A_{\xx_B}(C)=\mathbb P(\XX_A\in C,\XX_B\leq\xx_B)$
for arbitrary admissible conditioning sets $C,D\in\mathcal B_A$,rather than marginal stability alone. The resulting hierarchy$\mathrm{mMTP}_2(\XX)\Longrightarrow
\mathrm{cmMTP}_2(\XX_B\mid\XX_A)\Longrightarrow
\mathrm{MTP}_2(\XX)$
extends conditional total positivity to discrete, mixed,
and singular laws, while Theorem~\ref{thm:density-equivalence}
establishes equivalence with density-based
$\mathrm{cMTP}_2$ under regularity conditions.
The property is preserved under marginalization,
coordinatewise non-decreasing transformations,
and weak convergence, including
$\mu_n\ll\lambda^d$, $\mu_n\Rightarrow\mu$,
and $\mu\perp\lambda^d$.
These stability results extend to lattice-positive
Markov-kernel composition and copula approximation.
For strict Archimedean copulas,
$\mathrm{cmMTP}_2(\UU_B\mid\UU_A)$ is characterized
by log-convexity of
$h_k=(-1)^k\psi^{(k)}$, $k=|A|$.
For $\XX\sim N_d(\boldsymbol\mu,\Sigma)$,
$\Sigma\succ0$, Corollary~\ref{cor:gaussian-partial-correlation}
gives the explicit characterization
$\mathrm{cmMTP}_2(\XX)
\Longleftrightarrow
K_{ij}\leq0$, $i\neq j$, $K=\Sigma^{-1}$,
equivalently,
$\rho_{ij\mid[d]\setminus\{i,j\}}\geq0$.
Thus, within the nondegenerate Gaussian family,
$\mathrm{cmMTP}_2$ coincides with
$\mathrm{dMTP}_2$ and admits verification through
$d(d-1)/2$ precision-matrix restrictions,
providing a direct interpretation for Gaussian
graphical models \citep{KarlinRinott1980a,FallatEtAl2017}.
Finally, for a fixed finite system $\mathcal Q$,
$\sqrt n(\widehat{\boldsymbol\Delta}_n-
\boldsymbol\Delta)\Rightarrow N_m(\mathbf0,\Omega)$
yields density-free simultaneous inference.
Rejection of
$H_0:\boldsymbol\Delta\in[0,\infty)^m$
provides evidence against
$\mathrm{cmMTP}_2$ and excludes incompatible
positive-dependence model restrictions,
whereas non-rejection does not establish
Definition~\ref{def:conditional-measure-mtp2}. Consequently, the framework
connects conditional dependence structure,
tractable distributional characterization,
and multivariate model assessment, extending
the measure-theoretic approach of
\citet{ColangeloEtAl2006} and the conditional
theory of \citet{FuchsWang2027}.

\noindent\textit{Further directions.}
Several questions remain open. First, the Gaussian
characterization suggests studying whether
$\mathrm{cmMTP}_2$ admits comparably explicit
matrix or parameter restrictions for other
elliptical, mixture, and graphical families,
and whether the resulting restrictions distinguish
conditional lattice positivity from
distribution-function $\mathrm{MTP}_2$.
Second, the sufficient composition conditions
$K_1\in\mathcal K_{\mathrm{mLTP}_2}(A,B)$ and
$K_2\in\mathcal K_{\mathrm{LTP}_2}(B,C)$
motivate characterization of weaker or maximal
kernel classes preserving $\mathrm{cmMTP}_2$,
particularly for vector-valued intermediate blocks.
Third, Section~\ref{sec:inference} considers a fixed finite collection
$\mathcal Q$, for which
$\boldsymbol\Delta\geq\mathbf0$ is necessary but
not sufficient for Definition~\ref{def:conditional-measure-mtp2}.
An omnibus procedure with
$\mathcal Q=\mathcal Q_n$, $|\mathcal Q_n|\to\infty$,
would require uniform empirical-process
approximations, suitable bootstrap calibration,
and control of data-driven lattice selection.
Such extensions could provide stronger
specification diagnostics and guide constrained
covariance estimation, graphical model selection,
and copula modeling without density estimation.
Finally, weak closure motivates
$\mathrm{cmMTP}_2$-preserving empirical,
checkerboard, and Bernstein copula approximations,
while reverse-lattice inequalities suggest
negative-dependence analogs and corresponding
Markov-composition problems.

\subsection*{Competing interests.}
No potential conflict of interest was reported by the authors.	 

\subsection*{Data availability.}
No data was used for the research described in the article.

\bibliographystyle{apalike}
\bibliography{bib}
\end{document}